\documentclass{article}
\usepackage{amsmath,amssymb,amsthm}
\usepackage{graphicx,subfigure,float,url,color}
\usepackage{mathrsfs}
\usepackage[colorlinks=true]{hyperref}
\usepackage{pdfsync}
\usepackage{enumerate,enumitem}

\topmargin -1cm
\textheight 21cm
\textwidth 15cm
\oddsidemargin 1cm

\def\R{\textrm{I\kern-0.21emR}}
\def\N{\textrm{I\kern-0.21emN}}

\def\Tv{\mathbb{|\kern-0.1em|\kern-0.1em|}}

\renewcommand{\geq}{\geqslant}
\renewcommand{\leq}{\leqslant}

\newtheorem{theorem}{Theorem}
\newtheorem{proposition}{Proposition}

\newtheorem{lemma}{Lemma}
\theoremstyle{definition}
\theoremstyle{definition}\newtheorem{remark}{Remark}

\newcommand{\Real}{\mathrm{Re}}

\begin{document}
\title{Stabilization of infinite-dimensional linear control systems by POD reduced-order Riccati feedback}

\author{
Emmanuel Tr\'{e}lat\thanks{Sorbonne Universit\'e, Universit\'e Paris-Diderot SPC, CNRS, Inria, Laboratoire Jacques-Louis Lions, \'equipe CAGE, F-75005 Paris (\texttt{emmanuel.trelat@sorbonne-universite.fr})}
\quad
Gengsheng Wang\thanks{Center for Applied Mathematics, Tianjin University, Tianjin, 300072, China (\texttt{wanggs62@yeah.net}). The author was partially supported by  11571264. }
\quad
Yashan Xu\thanks{School of Mathematical Sciences, Fudan University, KLMNS, Shanghai 200433, China (\texttt{yashanxu@fudan.edu.cn}).}}

\date{}

\maketitle

\begin{abstract}
There exist many ways to stabilize an infinite-dimensional linear autonomous control systems when it is possible. Anyway, finding an exponentially stabilizing feedback control that is as simple as possible may be a challenge. The Riccati theory provides a nice feedback control but may be computationally demanding when considering a discretization scheme. Proper Orthogonal Decomposition (POD) offers a popular way to reduce large-dimensional systems. In the present paper, we establish that, under appropriate spectral assumptions, an exponentially stabilizing feedback Riccati control designed from a POD finite-dimensional approximation of the system stabilizes as well the infinite-dimensional control system.
 \end{abstract}

\textbf{Keywords:} Feedback stabilization, Proper Orthogonal Decomposition (POD), Riccati theory, linear quadratic optimal control.

\medskip

\textbf{AMS subject classifications:} 34H15, 35K05,  49M27, 49N10.

\section{Introduction and main result}

Stabilization of linear autonomous control systems is classically done in finite dimension by pole-shifting or by Riccati theory (see, e.g., \cite{kwak, LeeMarkus, Sontag, Trelat}). In infinite dimension, pole-shifting may be used for some appropriate classes of systems (see \cite{Barbu, CoronTrelat1, CoronTrelat2}, see also \cite[page 711]{Russell} and \cite[Chapter 3]{WangXu}), but such approaches rely on spectral considerations and in practice require the numerical computation of eigenelements, which may be hard in general. Riccati theory has also been much explored in infinite dimension (see, e.g., \cite{CurtainZwart, LT1, LT2, Z} and provides a powerful way for stabilizing a linear control system. Anyway, in practice, computing an approximation of the Riccati operator requires to consider a numerical approximation scheme and to compute the solution of a high-dimensional algebraic Riccati equation (see, e.g., \cite{BanksKunisch, KappelSalamon, LiuZheng, LT1, LT2} for convergence results for space semi-discretizations of the Riccati procedure, see also the survey \cite{trelat_stab}), which raises also a number of numerical difficulties.

Given these facts, it appears interesting to use dimension reduction procedures.
Indeed, model reduction can generate low-dimensional models for which one may expect reasonable performances for stabilization issues while keeping a computationally tractable numerical problem.
Proper Orthogonal Decomposition (POD) is a popular reduction model approach and can be used to generate, from a finite number $n$ of snapshots, a reduced-order control system in dimension $n$, approximating in the least square sense the initial infinite-dimensional system. Such an approach is completely general and does not consist of computing eigenelements (POD does not see eigenvectors). It is then natural to expect that, if $n$ is large enough, then a linear stabilizing feedback computed from the $n$-dimensional reduced-order control system, stabilizes as well the whole infinite-dimensional control system. Proving that this assertion holds true under appropriate assumptions is the objective of this paper: we prove that a low-order feedback control obtained by the Riccati procedure applied to a POD reduced-order model suffices to stabilize the complete infinite-dimensional control system.

The idea of using POD as a way to efficiently stabilize infinite-dimensional control systems, such as controlled PDEs, by means of a low-order feedback control, has been implemented in \cite{AtwellKing, Kunisch1, Kunisch2}, where a number of convincing numerical simulations have been provided, showing the relevance of that approach.
Feasibility of this methodology is nicely illustrated in \cite{AtwellKing} for heat equations and in \cite{Kunisch1} for the Burgers equation.
But, in these papers, the above theoretical issue has been let as an open problem. In this paper, we provide the first general theorem providing a positive answer.

\medskip

The paper is structured as follows.
In Section \ref{sec1.1} we give all assumptions under which our general result will be established.
We provide in Section \ref{secPOD} some elements on the POD approach. Our main result is stated in Section \ref{sec:main-result}. An idea of the strategy of its proof is given in Section \ref{sec:strategy}.
Section \ref{sec:someresults} contains some reminders and useful results on POD, useful in the proof of the main result.
Section \ref{sec:convergence} is devoted to proving the main theorem.
In Section \ref{sec:ccl}, we give a conclusion and some open problems and perspectives.
Finally, in Appendix \ref{app:riccati}, we establish an aymptotic result in Riccati theory, which is instrumental in the proof of our main result.

\subsection{General setting and assumptions}\label{sec1.1}
Let $H$ and $U$ be real Hilbert spaces. Let $A:D(A)\rightarrow H$ be a densely defined, closed selfadjoint operator, such that there exists some $\alpha\in\R$ for which $A-\alpha\,\mathrm{id}$ is dissipative. By the Lumer-Phillips theorem (see \cite{EngelNagel, Pazy}), $A$ generates a quasicontraction $C_0$ semi-group $(S(t))_{t\geq 0}$ on $H$, i.e., satisfying $\Vert S(t)\Vert\leq e^{\alpha t}$ for every $t\geq 0$.
Let $B\in L(U,H)$ be a bounded control operator.
Consider the control system
\begin{equation}\label{control_system}
\dot y(t) = A y(t) + Bu(t),\quad t\in(0,+\infty)
\end{equation}
with controls $u\in L^2(0,+\infty;U)$. 

The objective of our paper is to exponentially stabilize the control system \eqref{control_system} with a feedback control designed from a finite-dimensional projection of \eqref{control_system} obtained by POD.

\medskip

In what follows, we denote by $\Vert \cdot\Vert$ the norm in $H$ and by $\langle\cdot,\cdot\rangle$ the corresponding scalar product.
Throughout the paper, we make the following assumptions.

\begin{enumerate}[label=$\bf (H_\arabic*)$]
\item\label{H1} We assume that the Hilbert space $H$ can be written as the direct orthogonal sum
$$
H=E_\ell \overset{\bot}{\oplus} F_\ell
$$
where $E_\ell\subset D(A)$ is of dimension $\ell$, $F_\ell$ is a closed subspace of $H$ such that $F_\ell\cap D(A)$ is dense in $F_\ell$ (for the induced topology), satisfying $E_\ell\bot F_\ell$ and
$$
AE_\ell\subset E_\ell\qquad\textrm{and}\qquad A\left( F_\ell\cap D(A)\right) \subset F_\ell
$$
(invariance under $A$).

We denote by $P_\ell$ the orthogonal projection of $H$ onto $E_\ell$; then $\mathrm{id}-P_\ell$ is the orthogonal projection of $H$ onto $F_\ell$.
By \ref{H1}, we have
\begin{equation}\label{spectralproperties}
P_\ell A (\mathrm{id}-P_\ell)_{|D(A)} = 0,\qquad (\mathrm{id}-P_\ell) A P_\ell = 0
\end{equation}
and
\begin{equation}\label{decompA}
A = P_\ell AP_\ell +  (\mathrm{id}-P_\ell)A (\mathrm{id}-P_\ell) .
\end{equation}
It follows from the Hille-Yosida theorem (see, e.g., \cite{EngelNagel, Pazy}) that
\begin{itemize}
\item the (bounded) operator $P_\ell AP_\ell$ on $E_\ell$ (which can be identified with a matrix of size $\ell\times\ell$) generates on $E_\ell$ the uniformly continuous semigroup $\left( P_\ell S(t) P_\ell \right)_{t\geq 0}$, with $P_\ell S(t) P_\ell = \exp(t P_\ell A P_\ell)$ for every $t\geq 0$;
\item the operator $(\mathrm{id}-P_\ell)A (\mathrm{id}-P_\ell)$ on $F_\ell$, of domain $F_\ell\cap D(A)$, generates the (quasicontraction) $C_0$ semigroup $\left( (\mathrm{id}-P_\ell) S(t) (\mathrm{id}-P_\ell) \right)_{t\geq 0}$.
\end{itemize}
We make the two following assumptions on those semigroups:

\item\label{H2}
We assume that the latter semigroup is exponentially stable, i.e., that there exists $\gamma>0$ such that
$$
\Vert (\mathrm{id}-P_\ell) S(t) (\mathrm{id}-P_\ell) z\Vert \leq e^{-\gamma t}\Vert(\mathrm{id}-P_\ell) z\Vert \qquad\forall t\geq 0\quad \forall z\in H.
$$
\item\label{H3} The operator $P_\ell AP_\ell$ (restriction of $A$ to $E_\ell$) can be identified with a selfadjoint $\ell\times\ell$ matrix, which is therefore diagonalizable with real-valued eigenvalues. We assume that all eigenvalues of $P_\ell AP_\ell$ are simple and have a positive real part. We define
\begin{equation}\label{def_betaell}
\beta_\ell  = \min \{\lambda\ \mid\ \lambda\in\mathrm{Spec}(P_\ell AP_\ell)\} >0 .
\end{equation}

In other words, we assume in particular that $0\notin \mathrm{Spec}(A)$, that $E_\ell$ is the finite-dimensional instable part of the system and that $F_\ell$ is the exponentially stable part.

\item\label{H4} We assume that the pair $(P_\ell AP_\ell,P_\ell B)$ satisfies the Kalman condition
$$
\mathrm{rank}( P_\ell B, P_\ell AP_\ell B, \ldots, P_\ell A^{\ell-1}P_\ell B)=\ell .
$$
This assumption is satisfied under the following much stronger assumption of unique continuation (which is equivalent, by duality, to approximate controllability for the system \eqref{control_system}): there exists $T>0$ such that, given any $z\in H$, if $B^*S(t)^*z=0$ for every $t\in[0,T]$ then $z=0$.
\end{enumerate}

The assumptions \ref{H1}, \ref{H2}, \ref{H3} and \ref{H4} are satisfied, for instance, for heat-like equations with internal control, i.e., when
$$
A=\triangle + a\,\mathrm{id}\quad\text{and}\quad B=\chi_\omega
$$
where $a\in L^\infty(\Omega,\R)$, $\triangle$ is the Dirichlet-Laplacian on a bounded $C^2$ domain $\Omega$ of $\R^d$, $\omega\subset\Omega$ is a nonempty open subset of $\Omega$ and $\chi_\omega$ is its characteristic function. 
Taking $H=U=L^2(\Omega,\R)$, the operator $A=\triangle$ on $D(A)=H^1_0(\Omega,\R)\cap H^2(\Omega,\R)$ is selfadjoint and of compact inverse and thus is diagonalizable. We assume that $a$ and $\Omega$ are such that the spectrum of $A$ is simple (this is true under generic assumptions, see \cite{Teytel}) and such that $0$ is not an eigenvalue. Then there exists a Hilbert basis $(\phi_j)_{j\in\N^*}$ of $H$ consisting of real-valued eigenfunctions corresponding to the real eigenvalues
$$
-\infty\leftarrow\lambda_j\cdots<\lambda_{\ell+1}<0< \lambda_\ell<\lambda_{\ell-1}<\cdots <\lambda_1
$$
(with a slight abuse of notation because the number $\ell$ of instable modes may be equal to $0$). Taking $E_\ell=\mathrm{Span}\{\phi_j\ \mid\ j=1\ldots\ell\}$ and $F_\ell=\mathrm{Span}\{\phi_j\ \mid\ j\geq \ell+1\}$, Assumptions \ref{H1}, \ref{H2} and \ref{H3} are satisfied.
Assumption \ref{H4} is satisfied because of unique continuation: indeed we have $\chi_\omega\phi_j\neq 0$ for  $j=1,\dots,\ell$.

Of course, when $a$ is such that all eigenvalues of $A$ are negative, any solution of \eqref{control_system} converges exponentially to $0$.
We are interested in the case where there are (a finite number of) positive eigenvalues, i.e., $\ell>0$, and then stabilization is an issue.

\medskip

More generally, the assumptions \ref{H1}, \ref{H2}, \ref{H3} are satisfied when $A-\alpha\,\mathrm{id}$ is of compact inverse, with $A$ having a finite number of instable (positive) eigenvalues which are moreover simple. Our framework even allows for more general situations in which spectrum may not be discrete, but does not involve the case of wave-like equations for instance (for which $A$ is not selfadjoint). 
Assumption \ref{H4} follows from unique continuation but is much weaker and may be satisfied for finite-rank control operators $B$.

\medskip

Thanks to the assumptions \ref{H1}, \ref{H2}, \ref{H3} and \ref{H4}, to stabilize \eqref{control_system} it would suffice to focus on the finite-dimensional instable part $E_\ell$ of the infinite-dimensional system \eqref{control_system}, as this was done for instance in \cite{Barbu, CoronTrelat1, CoronTrelat2} (see also \cite[page 711]{Russell} and \cite[Chapter 3]{WangXu}).
However, in practice eigenelements are not known in general or may be difficult to compute numerically. In particular, the integer $\ell$ is not known in general or may be difficult to compute although we know its existence.

Stabilizing the system from a finite-dimensional approximation of \eqref{control_system} that is \emph{not} of a spectral nature but which is anyway, in some sense, \emph{compatible} with the above spectral decomposition, is the main challenge that we address in this paper.

\medskip

We address this issue by approximating the control system \eqref{control_system} thanks to the POD method, described hereafter, which generates a $m$-dimensional \emph{reduced-order} control system, with $m$ sufficiently large ($m\geq\ell$ will be enough).

In what follows, we consider an arbitrary element
$$
y_0\in D(A)
$$
which, used as an initial condition, generates the trajectory $y(t)=S(t)y_0$, solution of \eqref{control_system} with $u=0$. We will consider it to generate snapshots in the POD method as explained next.

\subsection{Proper Orthogonal Decomposition (POD)}\label{secPOD}
The main idea of POD is to design an orthogonal basis of reduced order (called a POD basis) from a given collection of data (called snapshots). In order to face with too costly computations of a too complex model, the rationale behind POD is to generate a reduced set of basis functions able to capture the essential information of the physical process under consideration.
POD has been developed long time ago, and independently, by many authors in various contexts. POD is closely related to Karhunen-Lo\`eve decompositions and to principal component analysis (PCA) or factor analysis. It has been widely used in the context of fluid mechanics and in particular turbulence (see \cite{Berkooz, Chambers, Gunzburger, Holmes, KunischVolkwein2002, Lumley}) of chemical reactions (see \cite{Ly, Singer, Theod}) and it has become a classical approach for nonlinear model reduction (see \cite{Chapelle, Hesthaven, HinzeVolkwein, KunischVolkwein2001, Lassila, Markovsky, Quarteroni, Sirovich} and see \cite{AtwellKing, KunischVolkwein1999, Kunisch1, Kunisch2, Ravindran, TroltzschVolkwein} for applications to control of PDEs).
The POD method consists of designing an unstructured low-rank approximation of a matrix composed of snapshots of the state.
It can roughly be thought of as a Galerkin approximation in the spatial variable, built from values $y_k=y(t_k)$ of solutions of the physical system taken at prescribed times $0=t_1 <t_2 <\dots<t_n <+\infty$, for some $n\in\N^*$. These values $y_k$ (assumed to be known) are called \emph{snapshots}.

Here, we take $n$ snapshots
\begin{eqnarray}\label{12168}
y_k=y(kT;y_0,0)=S(kT)y_0,\quad k=1,\ldots, n
\end{eqnarray}
of the solution $y(\cdot;y_0,0)$ to \eqref{control_system} with initial condition $y_0$ and with the control $u=0$, taken at times $kT$, for some $T>0$.
We set
\begin{equation}\label{defDn}
D_n =\mathrm{Span} ( y_1,y_2,\dots, y_n )\qquad\textrm{and}\qquad d_n = \dim D_n.
\end{equation}
Note that, since $y_0\in D(A)$, we have $D_n\subset D(A)$.


Given some integer $m\leq d_n$, the POD method consists of determining a subspace $D_{n,m}$ of $D_n$, of dimension $\leq m$, such that the mean square discrepancy between all snapshots $y_k$ and their orthogonal projection $\Pi_{D_{n,m}}y_n$ onto $D_{n,m}$ is minimal, i.e., it consists of minimizing the functional
\begin{equation}\label{JDnm}
\boxed{
J(D_{n,m}) = \sum_{k=1}^n \Big\Vert  y_k-\Pi_{D_{n,m}}y_k\Big\Vert ^2 =\sum_{k=1}^n \Big\Vert  \Pi^{D_n}_{D_{n,m}^\perp}y_k\Big\Vert ^2
}
\end{equation}
over all possible subspaces $D_{n,m}$ of $D_n$ of dimension $\leq m$ (equivalently, of dimension equal to $m$). 
Here, $\Pi^{D_n}_{D_{n,m}^\perp}$ is the orthogonal projection onto the orthogonal $D_{n,m}^\perp$ of $D_{n,m}$ in $D_n$.
This minimization problem has at least one solution $\overline D_{n,m}$ (see, e.g., \cite{Volkwein}) and we denote by $\overline J_{n,m}$ the optimal value, but the optimal approximating subspace $\overline D_{n,m}$ may not be unique.

The problem is often formulated as follows. 
Assume that $D_{n,m}=\mathrm{Span}( \psi_1,\ldots,\psi_m)$ and complete these $m$ orthonormal vectors into an orthonormal basis $(\psi_j)_{j=1,\ldots,d_n}$ of $D_n$; write $y_k=\sum_{j=1}^{d_n} \langle y_k,\psi_j\rangle \psi_j$ and $\Pi_{D_{n,m}}y_k = \sum_{j=1}^m \left\langle y_k,\psi_j \right\rangle  \psi_j$. Then, the POD method consists of minimizing
$$
\sum_{k=1}^n \Big\Vert  y_k-\sum_{j=1}^m \langle y_k,\psi_j\rangle \psi_j \Big\Vert^2
$$
over all possible orthonormal families $(\psi_j)_{j=1,\ldots,m}$ in $D_n$.
A subspace $\overline D_{n,m}=\mathrm{Span}( \overline \psi_1,\ldots,\overline \psi_m)$, optimal solution of the minimization problem \eqref{JDnm}, i.e., such that $J(\overline D_{n,m})=\overline J_{n,m}$, is then used as a best approximating subspace of $D_n$ of dimension $m$. Any orthonormal basis $(\psi_j)_{j=1,\ldots,m}$ of $\overline D_{n,m}$ is called a POD basis of rank $m$.

Other properties of POD, related to SVD (Singular Value Decomposition), are recalled further in Section \ref{sec:SVD}.

\subsection{Main result}\label{sec:main-result}
\paragraph{POD reduced-order control system.} 
Keeping the assumptions and notations of  the previous subsections, we fix an arbitrary $T>0$ and we consider an optimal solution $\overline D_{n,m}$ of the (POD) minimization problem \eqref{JDnm}, i.e., a best approximating $m$-dimensional subspace of  the space $D_n\subset D(A)$ defined by \eqref{defDn}.

Applying the orthogonal projection $\Pi_{\overline D_{n,m}}$ to the control system \eqref{control_system} yields
$$
\frac{d}{dt} \Pi_{\overline D_{n,m}} y(t) = \Pi_{\overline D_{n,m}} A \Pi_{\overline D_{n,m}} y(t) + \Pi_{\overline D_{n,m}} B u(t) + \Pi_{\overline D_{n,m}} A (\mathrm{id}-\Pi_{\overline D_{n,m}} ) y(t) .
$$
The last term at the right-hand side of the above equation is seen as a perturbation term, and we are thus led to consider the following \emph{POD reduced-order control system} in the space $\overline D_{n,m}$
\begin{equation}\label{reduced_control_system}
\boxed{ \dot Y(t) = A_{n,m} Y(t) + B_{n,m} v(t) }
\end{equation}
with
\begin{equation*}
A_{n,m} = \Pi_{\overline D_{n,m}} A \Pi_{\overline D_{n,m}} \quad\textrm{and}\quad B_{n,m} = \Pi_{\overline D_{n,m}} B .
\end{equation*}
Note that $A_{n,m} $ is well defined because $\overline D_{n,m}\subset D(A)$. The control system \eqref{reduced_control_system} is a linear autonomous control system in the $m$-dimensional
space $\overline D_{n,m}$ with controls $v\in L^2(0,+\infty;U)$.
The operator $A_{n,m}$ can be identified with a square matrix of size $m$ and $B_{n,m}$ with a matrix of size $m\times\dim(U)$ (with $U$ of finite or infinite dimension).

\paragraph{Stabilizing the reduced-order control system.}
In the proof of our main result (Theorem \ref{mainthm} hereafter), we will prove that the POD reduced-order $m$-dimensional control system \eqref{reduced_control_system} is stabilizable when $m\geq\ell$ and $n$ is large enough. To design an exponentially stabilizing linear feedback $v=K_{n,m}Y$, we use the Riccati theory (see also Appendix \ref{app:riccati} for some reminders on the Riccati theory).

Given any $\varepsilon\geq 0$, since the pair $(A_{n,m},B_{n,m})$ is stabilizable, by \cite[Corollary 2.3.7 page 55]{Abou-Kandil} there exists a (unique) maximal symmetric positive semidefinite solution $P_{n,m}(\varepsilon)$ (of size $m\times m$) of the algebraic Riccati equation
\begin{equation}\label{RiccatiPnm}
A_{n,m}^* P_{n,m}(\varepsilon) + P_{n,m}(\varepsilon) A_{n,m} - P_{n,m}(\varepsilon) B_{n,m} B_{n,m}^* P_{n,m}(\varepsilon) + \varepsilon I_{m} = 0 .
\end{equation}
Here and throughout, $I_k$ denotes the identity matrix of size $k$.
Moreover, $A_{n,m} - B_{n,m} B_{n,m}^* P_{n,m}(\varepsilon)$ is semi-stabilizing, i.e., its eigenvalues have nonpositive real part. If $\varepsilon>0$ then $P_{n,m}(\varepsilon)$ is positive definite, and $A_{n,m} - B_{n,m} B_{n,m}^* P_{n,m}(\varepsilon)$ is Hurwitz, i.e., its eigenvalues have a negative real part (these facts are established in Appendix \ref{app:riccati}).
Therefore, setting
$$
K_{n,m}(\varepsilon) = -B_{n,m}^* P_{n,m}(\varepsilon) ,
$$
the linear feedback
$$
v = K_{n,m}(\varepsilon) Y = -B_{n,m}^* P_{n,m}(\varepsilon) Y
$$
exponentially stabilizes the reduced-order control system \eqref{reduced_control_system} to the origin.

\begin{remark}
Given any $\varepsilon\geq 0$ and any $Y_0\in\overline D_{n,m}$, there exists a unique optimal control minimizing the functional
\begin{equation*}
\int_0^{+\infty} \left( \varepsilon \Vert Y(t)\Vert_{\overline D_{n,m}}^2 + \Vert v(t)\Vert_U^2 \right) dt
\end{equation*}
over all possible controls $v\in L^2(0,+\infty; U)$, where $Y(\cdot)$ is the solution to \eqref{reduced_control_system} with control $v$ and with initial condition $Y(0)=Y_0$. If $\varepsilon>0$ then the optimal control is exactly the stabilizing feedback $v = K_{n,m}(\varepsilon) Y$ .
\end{remark}

We will prove that the closed-loop matrix $A_{n,m} + B_{n,m} K_{n,m}(\varepsilon)$, which is Hurwitz if $\varepsilon>0$, actually remains \emph{uniformly} Hurwitz as $\varepsilon\rightarrow 0$ (precise asymptotic results are established in Appendix \ref{app:riccati}). In particular, the matrix $\exp(t(A_{n,m} + B_{n,m} K_{n,m}(\varepsilon)))$ decreases exponentially, with an exponential rate which remains uniformly bounded below by some positive constant as $\varepsilon\rightarrow 0$. 

\paragraph{Main result.}
We now use the above feedback matrix $K_{n,m}(\varepsilon)$ in the original infinite-dimensional control system \eqref{control_system}, by taking the feedback control
\begin{equation}\label{finitedimfeedback}
u = K_{n,m}(\varepsilon)\Pi_{\overline D_{n,m}} y = -B_{n,m}^* P_{n,m}(\varepsilon)\Pi_{\overline D_{n,m}} y .
\end{equation}
Since $B$ is bounded, the operator $A + B K_{n,m}(\varepsilon)\Pi_{\overline D_{n,m}}$ is defined on $D(A)$ and generates a $C_0$ semigroup.
Our main result establishes that, under appropriate assumptions, this semigroup is exponentially stable. In other words, the ``finite-dimensional" feedback \eqref{finitedimfeedback}, which exponentially stabilizes the finite-dimensional control system \eqref{reduced_control_system}, also exponentially stabilizes the infinite-dimensional control system \eqref{control_system} if the number $n$ of snapshots is large enough and if $\varepsilon>0$ is small enough.

\begin{theorem}\label{mainthm}
We make the assumptions \ref{H1}, \ref{H2}, \ref{H3} and \ref{H4}, and we assume that the pair $(P_\ell AP_\ell,P_\ell y_0)$ satisfies the Kalman condition, i.e.,
\begin{equation}\label{Kalmany0}
\mathrm{rank}(P_\ell y_0,P_\ell AP_\ell y_0,\ldots,P_\ell A^{\ell-1}P_\ell y_0) = \ell .
\end{equation}
Let $m\geq\ell$ be arbitrary.
There exist $\varepsilon_0>0$ and $n_0\in\N$ such that, for every $\varepsilon\in[0,\varepsilon_0]$ and every $n\geq n_0$, the control system \eqref{control_system} in closed-loop with the feedback $u=K_{n,m}(\varepsilon)\Pi_{\overline D_{n,m}} y$,
\begin{equation}\label{stateclosedloop}
\boxed{ \dot y(t) = \left( A + B K_{n,m}(\varepsilon)\Pi_{\overline D_{n,m}} \right) y(t) }
\end{equation}
is exponentially stable, meaning that any solution of \eqref{stateclosedloop} converges exponentially to $0$ in $H$ as $t\rightarrow+\infty$.
\end{theorem}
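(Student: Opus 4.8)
The plan is to realize the closed-loop generator in \eqref{stateclosedloop} as a norm-small bounded perturbation of a ``limit'' generator whose exponential stability can be read off from the spectral splitting $H=E_\ell\overset{\bot}{\oplus}F_\ell$ of \ref{H1}--\ref{H3}, and to keep both the perturbation size and the stability margin uniform in $\varepsilon\in[0,\varepsilon_0]$. The first observation I would record is a structural simplification: by \ref{H3} and \ref{H2} the spectrum of the selfadjoint operator $A$ is contained in $(-\infty,\lambda_1]$ (finitely many positive eigenvalues of $P_\ell AP_\ell$, the rest in $(-\infty,-\gamma]$), so $A$ is selfadjoint and bounded above and generates an \emph{analytic} semigroup; since $BK_{n,m}(\varepsilon)\Pi_{\overline D_{n,m}}$ is bounded of finite rank, the closed-loop generator remains analytic. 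For analytic semigroups the growth bound equals the spectral bound, so exponential stability reduces to locating the spectrum of $A+BK_{n,m}(\varepsilon)\Pi_{\overline D_{n,m}}$ strictly to the left of the imaginary axis, uniformly.

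First I would establish the convergence of the POD data as $n\to+\infty$. Writing the snapshots in $E_\ell\overset{\bot}{\oplus}F_\ell$, the unstable part $P_\ell y_k=\sum_{j=1}^{\ell}e^{\lambda_j kT}\langle y_0,\phi_j\rangle\phi_j$ (in the eigenbasis of $P_\ell AP_\ell$ furnished by \ref{H3}) grows geometrically, while $\Vert(\mathrm{id}-P_\ell)y_k\Vert\leq e^{-\gamma kT}\Vert(\mathrm{id}-P_\ell)y_0\Vert$ stays bounded by \ref{H2}. The Kalman condition \eqref{Kalmany0}, via a Vandermonde argument on the distinct growth rates $e^{\lambda_j T}$ (distinct by \ref{H3}), forces the restriction of the snapshot family to $E_\ell$ to have full rank $\ell$, with singular values of order $e^{\lambda_1 nT}\gg\cdots\gg e^{\lambda_\ell nT}$, all eventually dominating the $O(1)$ singular values coming from $F_\ell$. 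Using the SVD/gap perturbation theory recalled in Section~\ref{sec:someresults}, I would deduce $\Pi_{\overline D_{n,m}}\to\Pi_V$ in $L(H)$, where $V=E_\ell\overset{\bot}{\oplus}W$ with $W\subset F_\ell$ of dimension $m-\ell$ (the dominant eigenspace of the limiting snapshot correlation on $F_\ell$, which lies in $D(A)$ because the stable semigroup is analytic); in particular $E_\ell\subset V$. Consequently $A_{n,m}\to A_\infty:=\Pi_V A\Pi_V$ and $B_{n,m}\to B_\infty:=\Pi_V B$, and since $A$ maps $E_\ell$ into $E_\ell$ and $W$ into $F_\ell\bot E_\ell$, the operator $A_\infty$ is block-diagonal, $A_\infty=(P_\ell AP_\ell)\oplus(\Pi_W A\Pi_W)$, whose unstable block is $P_\ell AP_\ell$ and whose $W$-block has spectrum in $(-\infty,-\gamma]$ by \ref{H2} and selfadjointness; together with \ref{H4} this makes $(A_\infty,B_\infty)$ stabilizable.

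Next I would transfer this to the Riccati feedback and then to the spectrum. Stabilizability being an open property, $(A_{n,m},B_{n,m})$ is stabilizable for $n$ large, $P_{n,m}(\varepsilon)$ is well defined, and continuity of the maximal solution gives $P_{n,m}(\varepsilon)\to P_\infty(\varepsilon)$ uniformly in $\varepsilon\in[0,\varepsilon_0]$; the asymptotic Riccati result of Appendix~\ref{app:riccati} supplies the crucial uniformity, namely that $A_{n,m}+B_{n,m}K_{n,m}(\varepsilon)$ is \emph{uniformly} Hurwitz with a common decay rate $\rho>0$, with $\Vert K_{n,m}(\varepsilon)\Vert\leq C$ and $P_\infty(\varepsilon)$ uniformly positive definite on $E_\ell$, even in the degenerate limit $\varepsilon\to0^+$. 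For the limit generator $A-BB^*P_\infty(\varepsilon)\Pi_V$ I would then show exponential stability by spectral localization: its essential spectrum coincides with that of $A$ (finite-rank perturbation), hence lies in $(-\infty,-\gamma]$, and any eigenvalue with $\Real\lambda>-\gamma$ can be reduced, by projecting onto $F_\ell$, solving for the $F_\ell$-component through the bounded resolvent $(\lambda-(\mathrm{id}-P_\ell)A(\mathrm{id}-P_\ell))^{-1}$ (well defined since $\Real\lambda>-\gamma$), and substituting back, to a zero of an $\ell\times\ell$ analytic determinant; for $\Real\lambda\geq-\omega$ with $\omega\in(0,\min(\gamma,\rho))$ this determinant is a controlled analytic perturbation of the reduced closed-loop spectral problem and so is non-vanishing by the uniform margin above. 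Finally, the actual generator differs from the limit one by $B\bigl(K_{n,m}(\varepsilon)\Pi_{\overline D_{n,m}}-K_\infty(\varepsilon)\Pi_V\bigr)$, of norm $\leq C\delta_n$ with $\delta_n\to0$ uniformly in $\varepsilon$ by the first two steps; robustness of exponential stability of analytic semigroups under a bounded perturbation smaller than the stability margin then yields exponential stability of \eqref{stateclosedloop} for all $\varepsilon\in[0,\varepsilon_0]$ and $n$ large, which fixes $\varepsilon_0$ and $n_0$.

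The main obstacle is the POD convergence of the first step. Because $\lambda_1>\lambda_2>\cdots>\lambda_\ell$, the normalized snapshots $y_k/\Vert y_k\Vert$ collapse onto the single eigenvector $\phi_1$, so the snapshot correlation operator is severely ill-conditioned and $E_\ell$ cannot be read off naively; the point is that the SVD nonetheless separates the $\ell$ unstable directions through their distinct growth rates precisely when \eqref{Kalmany0} holds, and that the enormous gap between the $\ell$-th singular value and the $O(1)$ tail forces $\Pi_{\overline D_{n,m}}P_\ell\to P_\ell$. A secondary difficulty is the spectral localization of the limit operator together with its uniformity down to $\varepsilon=0$: one must show that the feedback built on the larger subspace $V\supset E_\ell$ genuinely moves the $\ell$ unstable eigenvalues of $A$ to the left and keeps them there as the maximal Riccati solution degenerates, which is exactly what the gap $\gamma$, the boundedness of $B$, and the asymptotic analysis of Appendix~\ref{app:riccati} are used to guarantee; a minor additional point is that norm convergence of the $W$-part of $\overline D_{n,m}$ needs a spectral gap in the limiting $F_\ell$-correlation, handled by choosing $m$ away from a cluster or by arguing along subsequences, since every limit subspace $V$ shares the same block structure.
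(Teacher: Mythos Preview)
Your overall architecture is sound and close in spirit to the paper, but the first step --- obtaining a fixed limit subspace $V=E_\ell\overset{\bot}{\oplus}W$ with $\Pi_{\overline D_{n,m}}\to\Pi_V$ and $A_{n,m}\to A_\infty=\Pi_V A\Pi_V$ --- is a genuine gap, and the rest of your argument depends on it. Two concrete problems. First, there is no reason for the $(m-\ell)$-dimensional complement $G_{n,m,\ell}=\mathrm{Span}(v_{n,\ell+1},\ldots,v_{n,m})$ to converge: the ``limiting $F_\ell$-correlation'' need not have simple eigenvalues (or even discrete spectrum, since $F_\ell$ is only assumed to carry an exponentially stable semigroup), so the spectral gap you invoke may fail, and your subsequence device does not obviously yield the \emph{uniform} estimate needed for ``every $n\geq n_0$''. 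Second, and more seriously, even if $\Pi_{\overline D_{n,m}}\to\Pi_V$ in $L(H)$, this does \emph{not} imply $A_{n,m}=\Pi_{\overline D_{n,m}}A\Pi_{\overline D_{n,m}}\to\Pi_V A\Pi_V$, because $A$ is unbounded; nor is it clear that the limit $W$ lies in $D(A)$ (closure of subspaces of $D(A)$ need not stay in $D(A)$), so $\Pi_V A\Pi_V$ may not even be defined. Your remark that analyticity puts $S(t)H\subset D(A)$ only tells you the snapshots lie in $D(A)$, not their subspace limits.

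The paper sidesteps both issues by never asking $\overline D_{n,m}$ to converge. It splits $\overline D_{n,m}=\overline D_{n,\ell}\overset{\bot}{\oplus}G_{n,m,\ell}$ and proves only $\Pi_{\overline D_{n,\ell}}\to P_\ell$ (Lemma~\ref{lemma4}); the crucial unbounded-$A$ estimate $A(\mathrm{id}-P_\ell)\Pi_{\overline D_{n,\ell}}\to 0$ is obtained not from operator convergence but from the explicit SVD identity $V_{n,\ell}=Y_{n,\ell}U_{n,\ell}\Sigma_{n,\ell}^{-1}$ together with the a priori bound $\Vert A(\mathrm{id}-P_\ell)y_k\Vert\leq Ce^{-\gamma kT}$ and $\sigma_{n,\ell}\to+\infty$. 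For $G_{n,m,\ell}$ no limit is used at all: dissipativity of $(\mathrm{id}-P_\ell)(A+\gamma\,\mathrm{id})(\mathrm{id}-P_\ell)$ gives that $\Pi_{G_{n,m,\ell}}(\mathrm{id}-P_\ell)A(\mathrm{id}-P_\ell)\Pi_{G_{n,m,\ell}}$ is \emph{uniformly} Hurwitz (Lemma~\ref{lem_H}), which is all the Riccati asymptotics of Appendix~\ref{app:riccati} require. The final stability argument is also different from yours: rather than spectral localization of a limit generator, the paper writes $A+BK_{n,m}(\varepsilon)\Pi_{\overline D_{n,m}}$ in blocks relative to $E_\ell\overset{\bot}{\oplus}F_\ell$, uses Appendix~\ref{app:riccati} to get $K_{n,m}(\varepsilon)\Pi_{\overline D_{n,m}}(\mathrm{id}-P_\ell)=\mathrm{o}(1)$, and concludes that the infinite matrix is an $\mathrm{o}(1)$ perturbation of a lower block triangular operator with uniformly Hurwitz diagonal blocks. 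Your analyticity observation (growth bound $=$ spectral bound) is correct and could shorten that last step, but it does not rescue the missing convergence of $A_{n,m}$.
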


\begin{remark}[On Assumption \eqref{Kalmany0}]\label{rem1}
Under Assumption \eqref{Kalmany0}, we have that $d_n=\dim(D_n)\geq\ell$ when $n\geq \ell$. This is why we can take $m\geq\ell$ in the theorem.
Since $P_\ell AP_\ell$ is selfadjoint, recalling that $y_k=S(kT)y_0=S(T)^ky_0$ and noting that $P_\ell S(T)P_\ell = \exp(T P_\ell AP_\ell)$, we see that Assumption \eqref{Kalmany0} is equivalent to the assumption that the pair $(P_\ell S(T)P_\ell,P_\ell y_0)$ satisfies the Kalman condition, i.e.,
\begin{equation}\label{kalmanexp}
\mathrm{rank}(P_\ell y_0, P_\ell y_1, \ldots, P_\ell y_{\ell-1}) = \mathrm{rank}(P_\ell y_0,P_\ell S(T)P_\ell y_0,\ldots,P_\ell S(T)^{\ell-1}P_\ell y_0)=\ell .
\end{equation}
This is rather this condition that we will use in the proof.

A second remark is the following.
Let $(\phi_{1},\ldots,\phi_{ \ell})$ be an orthonormal basis of $E_\ell$, consisting of eigenvectors of $P_\ell AP_\ell$, corresponding to the (real-valued) eigenvalues $\lambda_j$, $j=1,\ldots, \ell$.
Assumption \eqref{Kalmany0} (equivalently, Assumption \eqref{kalmanexp}) is satisfied if and only if all eigenvalues of $P_\ell AP_\ell$ are simple and
\begin{equation}\label{generic_condition}
\langle \phi_{j},\, P_\ell y_0\rangle \neq 0  \qquad\forall j\in\{1,\ldots, \ell\}
\end{equation}
i.e., the component of $P_\ell y_0$ in the direction $\phi_{j}$ is nonzero, for every $j\in\{1,\ldots, \ell\}$.
The condition \eqref{generic_condition} is generic in the sense that the set of $y_0\in D(A)$ of which one of the $\ell$ first spectral modes is zero has codimension $1$ (and thus has measure zero) in $D(A)$.
\end{remark}

\begin{remark}\label{rem_exprate}
Define the best exponential decay rate $\gamma^*$ of an exponentially stable $C_0$ semigroup $(T(t))_{t\geq 0}$ on $H$ as the supremum of all possible $\gamma>0$ for which there exists $M\geq 1$ such that $\Vert T(t)\Vert_{L(H)} \leq M\mathrm{e}^{-\gamma t}$ for every $t\geq 0$, i.e., $\gamma^* = -\inf_{t>0} \frac{1}{t} \ln \Vert T(t)\Vert_{L(H)} = -\lim_{t\rightarrow+\infty} \frac{1}{t} \ln \Vert T(t)\Vert_{L(H)}$ (see \cite{EngelNagel, Pazy}).

Let $\gamma>0$ be the best decay rate of the exponentially stable quasicontraction $C_0$ semigroup $\left( (\mathrm{id}-P_\ell) S(t) (\mathrm{id}-P_\ell) \right)_{t\geq 0}$ (see Assumption \ref{H2}). Let $\gamma_\varepsilon>0$ be the best decay rate of the matrix $\exp(t(A_{n,m} + B_{n,m} K_{n,m}(\varepsilon)))$ ($-\gamma_\varepsilon$ is the spectral abscissa of $A_{n,m} + B_{n,m} K_{n,m}(\varepsilon)$).

Then, in Theorem \ref{mainthm}, the growth bound $\gamma^*(\varepsilon)$ of the exponentially stable $C_0$ semigroup generated by $A + B K_{n,m}(\varepsilon)\Pi_{\overline D_{n,m}}$ satisfies
\begin{equation}\label{decayrate}
\lim_{\varepsilon\rightarrow 0} \gamma^*(\varepsilon) = \min(\gamma,\gamma_\varepsilon) .
\end{equation}
\end{remark}

\subsection{Strategy of the proof}\label{sec:strategy}
Establishing Theorem \ref{mainthm} is easier under the additional assumption
\begin{equation}\label{subspace}
E_\ell\subset \overline D_{n,m}
\end{equation}
and we first sketch the argument under this simplifying assumption.
In this case, we write
\begin{equation}\label{1.18}
\overline D_{n,m}=E_\ell\overset{\bot}{\oplus} F_1
\end{equation}
where $F_1$ is a subspace of $F_\ell$.
Since $\mathrm{Ran}(AP_\ell)\subset E_\ell\,\bot\, F_1$, we have $P_{F_1} AP_\ell=0$ where $P_{F_1}$ is the orthogonal projection onto $F_1$. In the decomposition \eqref{1.18}, the control system \eqref{reduced_control_system} is written as
\begin{eqnarray}
\dot Y_1 &=& P_\ell AP_\ell Y_1 + P_\ell B u \label{1.19} \\
\dot Y_2 &=& P_{F_1}AP_{F_1} Y_2 + P_{F_1}Bu \label{1.19b}
\end{eqnarray}
By \ref{H4}, the pair $(P_\ell AP_\ell, P_\ell B)$ satisfies the Kalman condition and thus the subsystem \eqref{1.19} is stabilizable.
Besides, by \ref{H2}, the subsystem \eqref{1.19b} is exponentially stable with control $u=0$.
It follows from Appendix \ref{app:riccati} that the control system \eqref{1.19}-\eqref{1.19b} (which is equivalent to \eqref{1.18}) is stabilizable by the Riccati procedure: the optimal feedback $u = K_{n,m}(\varepsilon)P_\ell Y_1 + K_{n,m}(\varepsilon)P_{F_1} Y_2$ exponentially stabilizes the system \eqref{1.19}-\eqref{1.19b}, i.e., the closed-loop matrix
\begin{equation}\label{1.20}
\begin{pmatrix}
P_\ell (A+B K_{n,m}(\varepsilon))P_\ell & P_\ell BK_{n,m}(\varepsilon)P_{F_1}  \\
P_{F_1}BK_{n,m}(\varepsilon)P_\ell & P_{F_1}(A+B K_{n,m}(\varepsilon))P_{F_1} \\
\end{pmatrix}
\end{equation}
is Hurwitz, for every $\varepsilon>0$.
Moreover, by \eqref{a.55} in Appendix \ref{app:riccati}, we have $K_{n,m}(\varepsilon)P_{F_1}\rightarrow 0$ as $\varepsilon\rightarrow 0$, and the closed-loop matrix \eqref{1.20} remains uniformly Hurwitz as $\varepsilon\rightarrow 0$. This fact is important in our analysis.

Now, plugging this finite-dimensional feedback into the initial control system \eqref{control_system}, we obtain the closed-loop system \eqref{stateclosedloop}, with $A + B K_{n,m}(\varepsilon)\Pi_{\overline D_{n,m}}$ that is written in the above decomposition as the infinite-dimensional matrix
\begin{equation}\label{1.22}
\left(\begin{array}{c|cc}
P_\ell (A+B K_{n,m}(\varepsilon))P_\ell &P_\ell B K_{n,m}(\varepsilon)P_{F_1} &0
\\[1mm] \hline
\\[-3mm]
P_{F_1}B K_{n,m}(\varepsilon)P_\ell & P_{F_1}(A+B K_{n,m}(\varepsilon))P_{F_1} &P_{F_1}A(\mathrm{id}-P_\ell-P_{F_1})
\\[1mm]
0&(\mathrm{id}-P_\ell-P_{F_1})A P_{F_1}&(\mathrm{id}-P_\ell-P_{F_1})A(\mathrm{id}-P_\ell-P_{F_1})
\end{array}\right).
\end{equation}
Since $K_{n,m}(\varepsilon)P_{F_1}\rightarrow 0$ as $\varepsilon\rightarrow 0$, the matrix \eqref{1.22} is approximately lower block triangular, with the first diagonal block being exponentially stable (because \eqref{1.20} is Hurwitz) and the second diagonal block being exponentially stable as well (because it is close to $(\mathrm{id}-P_\ell)A (\mathrm{id}-P_\ell)$ as $\varepsilon$ is small enough).
Therefore, \eqref{1.22} is exponentially stable and Theorem \ref{mainthm} follows, under the simplifying assumption \eqref{subspace}.

\medskip

In general, however, \eqref{subspace} is not true: there is indeed no reason that, when performing the POD reduction, the space $\overline D_{n,m}$ contain the spectral subspace $E_\ell$. Indeed, \emph{``POD does not see eigenmodes"}.

Anyway, our complete analysis, done in Section \ref{sec:convergence}, will reveal that this is \emph{almost} the case: we will prove in particular that
$$
\overline D_{n,\ell} \simeq E_\ell
$$
when $n$ is large enough, which implies that the inclusion \eqref{subspace} is almost satisfied (because $\overline D_{n,\ell}\subset \overline D_{n,m}$). Establishing such a result will require a quite fine analysis. This shows that, in some sense, our problem is a small perturbation problem of \eqref{1.22}
when $n$ is large. Theorem \ref{mainthm} will then be proved in Section \ref{sec:proofs}, using an asymptotic result in Riccati theory developed in Appendix \ref{app:riccati}, roughly stating that, when considering a linear system having an instable part and a stable part, the Riccati stabilization procedure with weight $\varepsilon$ on the state and weight $1$ on the control yields feedbacks that essentially act on the instable part for $\varepsilon$ small. The results established in Appendix \ref{app:riccati} are a bit delicate and require a particular care, although all notions thereof remain quite elementary.

\section{Some results on POD}\label{sec:someresults}
\subsection{Relationship with Singular Value Decomposition (SVD)}\label{sec:SVD}
It is well known that optimal solutions of POD can be expressed thanks to SVD.
 Let $\{y_1,\dots,y_n\}$ be given by \eqref{12168}.
We consider the $(\infty\times n)$-matrix
$$
Y_n = \left( y_1 , \ldots , y_n \right)
$$
expressed in an arbitrary Hilbert basis of $H$.
We have $D_n=\mathrm{Ran}(Y_n)$ and $d_n=\mathrm{rank}(Y_n)=\dim(D_n)$.
Since the matrix $Y_n$ is of finite rank $d_n$, SVD works exactly as in finite dimension (because $Y_nY_n^*$ and $Y_n^*Y_n$ are compact and selfadjoint, see \cite{Beutler}). According to the SVD theorem, we have
$$
Y_n = V_n \Sigma_n U_n^*
$$
where $V_n$ is an orthogonal matrix of infinite size (unitary operator in $H$, consisting of eigenvectors of $Y_nY_n^*$), $U_n$ is an orthogonal matrix of size $n$ (consisting of eigenvectors of $Y_n^*Y_n$) and $\Sigma_n$ is a matrix of size $\infty\times n$ consisting of the diagonal
 $\sigma_{n,1}\geq \sigma_{n,2}\geq \cdots\geq\sigma_{n,n}\geq 0$
 (singular values of $Y_n$), completed with zeros. The singular values of $Y_n$ are nonnegative real numbers, with the $d_n$ first ones being positive and all others being zero.
Denoting by $u_{n,i}\in H$ and $v_{n,i}\in H$ the columns of $U_n$ and $V_n$, we have
\begin{equation}\label{singular_values}
Y_n = \sum_{i=1}^{d_n} \sigma_{n,i} v_{n,i} u_{n,i}^* .
\end{equation}
Let $m\leq d_n$ be an integer.
We define the $(\infty\times m)$-matrix $V_{n,m}$ as the submatrix of $V_n$ consisting of the first $m$ columns of $V_n$, which are $v_{n,1},\ldots,v_{n,m}$. Similarly, we define the $(n\times m)$-matrix $U_{n,m}$ as the submatrix of $U_n$ consisting of the first $m$ columns of $U_n$, which are $u_{n,1},\ldots,u_{n,m}$. Finally, we define the square diagonal matrix $\Sigma_{n,m}$ of size $m$, consisting of the elements $\sigma_{n,1}\geq\cdots\geq\sigma_{n,m}$.
It is then well known (see, e.g., \cite{Golub, Markovsky}) that the ``best" projection of rank $\leq m$ onto $D_n=\mathrm{Ran}(Y_n)$ is $V_{n,m} V_{n,m}^*$ and that the ``best" approximation of rank $\leq m$ of the matrix $Y_n$, over all matrices of rank $\leq m$, is the matrix
$$
Y_{n,m} = V_{n,m} V_{n,m}^* Y_n = V_{n,m}\Sigma_{n,m} U_{n,m}^* = \sum_{i=1}^{m} \sigma_{n,i}v_{n,i} u_{n,i}^*.
$$
``Best" is understood here in the sense of the Frobenius norm as well as of the subordinate $2$-norm (and actually, of any norm invariant under the orthogonal group), and the Frobenius norm of $Y_n-Y_{n,m}$ is
$$
\Vert Y_n-Y_{n,m}\Vert_F^2 = \sum_{i=m+1}^{d_n} \sigma_{n,i}^2.
$$
Recall that the square $\Vert D\Vert_F^2$ of the Frobenius norm of a matrix $M$ (of any size, possibly infinite) is equal to the sum of squares of all elements of $M$. 
Moreover, when considering the Frobenius norm (also called Hilbert-Schmidt norm), we have uniqueness of the minimizer $Y_{n,m}$ if and only if $\sigma_{n,m}\neq \sigma_{n,m+1}$.
Note that the range of $Y_{n,m}$ is contained in the range of $Y_n$,  and thus $Y_{n,m}$ is also the best approximation of rank $\leq m$ of $Y_n$ over all possible matrices of rank $\leq m$ whose range is contained in the range of $Y_n$.

By definition, the quantity $J(D_{n,m})$ defined by \eqref{JDnm} is exactly the Frobenius norm of $Y_n-\Pi_{D_{n,m}}Y_n$:
$$
\boxed{
J(D_{n,m}) = \Vert Y_n-\Pi_{D_{n,m}}Y_n \Vert_F^2
}
$$
By the above remarks, since $\mathrm{rank}(\Pi_{D_{n,m}}Y_n)\leq m$, the POD problem is exactly equivalent to searching the best approximation of rank $\leq m$ of the matrix $Y_n$ for the Frobenius norm.
Therefore, we have
\begin{equation}\label{optvalue}
\boxed{
\overline J_{n,m} = J(\overline D_{n,m}) = \Vert Y_n-Y_{n,m}\Vert_F^2 = \sum_{i=m+1}^{d_n} \sigma_{n,i}^2
}
\end{equation}
with
\begin{equation*}
\begin{split}
\overline D_{n,m} &= \mathrm{Ran}(\Pi_{\overline D_{n,m}}) = \mathrm{Span}(v_{n,1},\ldots,v_{n,m}) ,\\
\Pi_{\overline D_{n,m}} &= V_{n,m} V_{n,m}^*, \\
Y_{n,m} &= \Pi_{\overline D_{n,m}} Y_n = V_{n,m}\Sigma_{n,m} U_{n,m}^* .
\end{split}
\end{equation*}

\subsection{Boundedness of the optimal value}
Recall that $y_k=S(kT) y_0$ for every $k\in\N$, where $T>0$ is fixed.
\begin{lemma}\label{lem_optvalue}
Under Assumptions \ref{H1} and \ref{H2}:
\begin{itemize}
\item There exists $C>0$ such that
\begin{equation}\label{ineq_sec_proof_lem_optvalue}
\Vert (\mathrm{id}-P_\ell)y_k\Vert + \Vert A(\mathrm{id}-P_\ell)y_k\Vert \leq C e^{-\gamma kT} \qquad\forall k\in\N.
\end{equation}
\item Given any $m\geq\ell$, the optimal value $\overline J_{n,m}$ (given by \eqref{optvalue}) of the minimization problem \eqref{JDnm} remains bounded as $n\rightarrow+\infty$.
\end{itemize}
\end{lemma}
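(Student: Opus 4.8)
The plan is to prove the two bullets in turn, the second being an almost immediate consequence of the first once the singular-value characterization of $\overline J_{n,m}$ from Section \ref{sec:SVD} is brought in. The unifying observation is that all snapshots $y_k = S(kT)y_0$ have a stable component $(\mathrm{id}-P_\ell)y_k$ that decays geometrically, and that this stable component is exactly what a rank-$\leq m$ truncation can afford to discard.

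For the first bullet I would begin by recording that the semigroup commutes with the projection, $P_\ell S(t)=S(t)P_\ell$. This is the one point deserving care. Since $AE_\ell\subset E_\ell$ with $\dim E_\ell<\infty$, comparing $S(t)z$ with $\exp(tP_\ell AP_\ell)z$ for $z\in E_\ell$ (both solve $\dot w=Aw$, $w(0)=z$) gives $S(t)E_\ell\subset E_\ell$; because $A$ is selfadjoint, $S(t)=e^{tA}$ is selfadjoint, so its orthogonal invariant complement $F_\ell=E_\ell^\perp$ is invariant too, which is consistent with the semigroup decomposition already stated in Section \ref{sec1.1}. Writing $z_0=(\mathrm{id}-P_\ell)y_0\in F_\ell\cap D(A)$, commutation yields $(\mathrm{id}-P_\ell)y_k=S(kT)z_0=(\mathrm{id}-P_\ell)S(kT)(\mathrm{id}-P_\ell)z_0$, so \ref{H2} gives $\Vert(\mathrm{id}-P_\ell)y_k\Vert\leq e^{-\gamma kT}\Vert z_0\Vert$. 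For the second summand I would use that $A$ commutes with $S(kT)$ on $D(A)$ and that, by \eqref{decompA}, $Az_0=(\mathrm{id}-P_\ell)A(\mathrm{id}-P_\ell)z_0\in F_\ell$; hence $A(\mathrm{id}-P_\ell)y_k=S(kT)Az_0=(\mathrm{id}-P_\ell)S(kT)(\mathrm{id}-P_\ell)Az_0$ and \ref{H2} again gives $\Vert A(\mathrm{id}-P_\ell)y_k\Vert\leq e^{-\gamma kT}\Vert Az_0\Vert$. Adding the two proves \eqref{ineq_sec_proof_lem_optvalue} with $C=\Vert(\mathrm{id}-P_\ell)y_0\Vert+\Vert A(\mathrm{id}-P_\ell)y_0\Vert$.

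For the second bullet I would exploit the fact, recalled in Section \ref{sec:SVD}, that $\overline J_{n,m}$ is the squared Frobenius distance from $Y_n$ to the set of matrices of rank $\leq m$. The columns of $P_\ell Y_n=(P_\ell y_1,\ldots,P_\ell y_n)$ all lie in the $\ell$-dimensional space $E_\ell$, so $\mathrm{rank}(P_\ell Y_n)\leq\ell\leq m$ and $P_\ell Y_n$ is an admissible competitor. By optimality of the truncated SVD,
\begin{equation*}
\overline J_{n,m}=\Vert Y_n-Y_{n,m}\Vert_F^2\leq\Vert Y_n-P_\ell Y_n\Vert_F^2=\Vert(\mathrm{id}-P_\ell)Y_n\Vert_F^2=\sum_{k=1}^n\Vert(\mathrm{id}-P_\ell)y_k\Vert^2 .
\end{equation*}
Inserting the bound \eqref{ineq_sec_proof_lem_optvalue} from the first bullet, the right-hand side is dominated by $C^2\sum_{k=1}^{\infty}e^{-2\gamma kT}=C^2 e^{-2\gamma T}/(1-e^{-2\gamma T})$, a finite constant independent of $n$, which is the claimed uniform boundedness.

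The only genuinely delicate point is the commutation $P_\ell S(t)=S(t)P_\ell$ underlying the first bullet; everything else reduces to a direct application of \ref{H2} and of the variational/SVD characterization of the POD value. I expect the second bullet, despite being the headline statement, to be essentially immediate once the geometric decay of the stable component is established, the operative idea being to test the rank-$\leq m$ approximation problem against the spectral truncation $P_\ell Y_n$ rather than against any POD-specific subspace of $D_n$.
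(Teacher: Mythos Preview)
Your proof is correct and follows essentially the same route as the paper: both items rest on the commutation $P_\ell S(t)=S(t)P_\ell$ together with \ref{H2} to obtain geometric decay of $(\mathrm{id}-P_\ell)y_k$ and $A(\mathrm{id}-P_\ell)y_k$, and then the second item is obtained by testing the rank-$\leq m$ Frobenius approximation against the competitor $P_\ell Y_n$ and summing the resulting geometric series. Your write-up is in fact slightly more careful than the paper's in justifying the commutation, but the argument is otherwise identical.
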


\begin{proof}
Since $P_\ell$ commutes with $S(t)$ by Assumption \ref{H1}, we have $P_\ell y_k = P_\ell S(kT) y_0 = S(kT) P_\ell y_0$, and hence
$$
\Vert y_k-P_\ell y_k\Vert = \Vert (\mathrm{id}-P_\ell)S(kT)(\mathrm{id}-P_\ell) y_0\Vert \leq  e^{-\gamma kT} \Vert (\mathrm{id}-P_\ell) y_0\Vert
$$
where we have used Assumption \ref{H2} to get the latter inequality, and similarly,
$$
\Vert A y_k-AP_\ell y_k\Vert = \Vert (\mathrm{id}-P_\ell)S(kT)(\mathrm{id}-P_\ell) Ay_0\Vert \leq  e^{-\gamma kT} \Vert (\mathrm{id}-P_\ell) Ay_0\Vert
$$
and the first item follows because $y_0\in D(A)$.

For the second item, using the SVD interpretation of the POD, we have in particular (since $m\geq\ell$)
$$
\overline J_{n,m} \leq \Vert Y_n-P_\ell Y_n\Vert_F^2 = \sum_{k=1}^n \Vert y_k-P_\ell y_k\Vert^2.
$$
Therefore
$$
\overline J_{n,m} \leq  \Vert y_0\Vert^2 \sum_{k=1}^n e^{-2\gamma kT} \leq  \Vert y_0\Vert^2 \frac{ e^{-2\gamma T} }{ 1-e^{-2\gamma T} }
$$
and the lemma follows.
\end{proof}

\begin{remark}\label{rem_optvalue}
It follows from the second item of Lemma \ref{lem_optvalue} that there exists $C_1>0$ (which is the bound on $\overline J_{n,m}$) such that, for all integers $n$ and $m$ satisfying $n\geq m\geq\ell$, we have
$$
\sum_{k=1}^n \Vert (\mathrm{id}-\Pi_{\overline D_{n,m}})y_k\Vert^2 \leq C_1
$$
\end{remark}

\section{Proof of Theorem \ref{mainthm}}\label{sec:convergence}
\subsection{Several convergence results}
The lemmas established in this subsection are the key results to prove Theorem \ref{mainthm} in the next subsection.
Throughout, we make the assumptions \ref{H1}, \ref{H2}, \ref{H3}, \ref{H4} and \eqref{Kalmany0} (equivalently, \eqref{kalmanexp}).

\begin{lemma}\label{lem_CVproj}
If $m\geq\ell$ then there exists $C>0$ such that
\begin{equation*}
\Vert ( \mathrm{id} -  \Pi_{\overline D_{n,m}} ) P_\ell \Vert \leq C  e^{-n\beta_\ell T}\qquad  \forall n\geq m.
\end{equation*}
\end{lemma}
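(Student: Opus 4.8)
The plan is to recast the statement as a perturbation estimate for dominant singular subspaces, and then to quantify how strongly the unstable modes dominate the snapshots. First I would reduce to the top $\ell$ singular subspace. Since $m\geq\ell$, the space $\overline D_{n,m}=\mathrm{Span}(v_{n,1},\ldots,v_{n,m})$ contains $\mathrm{Span}(v_{n,1},\ldots,v_{n,\ell})$; writing $\Pi_{n,\ell}$ for the orthogonal projection onto the latter, we have $\Vert(\mathrm{id}-\Pi_{\overline D_{n,m}})x\Vert\leq\Vert(\mathrm{id}-\Pi_{n,\ell})x\Vert$ for every $x\in H$, so it suffices to bound $\Vert(\mathrm{id}-\Pi_{n,\ell})P_\ell\Vert$. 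I would then split $Y_n=P_\ell Y_n+(\mathrm{id}-P_\ell)Y_n$. The key observation is that $P_\ell$ is exactly the orthogonal projection onto $E_\ell=\mathrm{Ran}(P_\ell Y_n)$, which has rank $\ell$ for $n\geq\ell$ by \eqref{kalmanexp}, i.e. onto the dominant (and only nonzero) left singular subspace of $P_\ell Y_n$, whereas $\Pi_{n,\ell}$ projects onto the top $\ell$ left singular subspace of $Y_n$. Hence $\Vert(\mathrm{id}-\Pi_{n,\ell})P_\ell\Vert$ is precisely the sine of the largest principal angle between the dominant singular subspaces of $Y_n$ and of $P_\ell Y_n$, and the problem becomes a Davis--Kahan/Wedin $\sin\Theta$ problem with perturbation $E_n:=(\mathrm{id}-P_\ell)Y_n$.

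Two quantitative inputs are then needed. The size of the perturbation is controlled by $\Vert E_n\Vert\leq\Vert E_n\Vert_F$ with $\Vert E_n\Vert_F^2=\sum_{k=1}^n\Vert(\mathrm{id}-P_\ell)y_k\Vert^2$ bounded uniformly in $n$ by the first item of Lemma \ref{lem_optvalue}. The spectral gap requires proving that the smallest nonzero singular value of $P_\ell Y_n$ grows at the sharp rate $e^{\beta_\ell nT}$, namely $\sigma_\ell(P_\ell Y_n)\geq c\,e^{\beta_\ell nT}$ for $n$ large and some $c>0$. To establish this I would expand, in the orthonormal eigenbasis $(\phi_j)$ of $P_\ell AP_\ell$, $P_\ell y_k=\sum_{j=1}^\ell c_j e^{\lambda_j kT}\phi_j$ with $c_j=\langle\phi_j,P_\ell y_0\rangle\neq0$ (this is exactly \eqref{generic_condition}); equivalently $P_\ell Y_n=\Phi C W$, where $\Phi$ has orthonormal columns, $C=\mathrm{diag}(c_j)$ is invertible, and $W$ is the generalized Vandermonde matrix $W_{jk}=\mu_j^k$ with $\mu_j=e^{\lambda_j T}>1$. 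Since $\Phi^*\Phi=I_\ell$ and $C$ is invertible, $\sigma_\ell(P_\ell Y_n)=\sigma_\ell(CW)\geq(\min_j|c_j|)\,\sigma_\ell(W)$, and it remains to bound $\sigma_\ell(W)^2=\lambda_{\min}(WW^*)$ from below.

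The heart of the matter is this last lower bound. I would compute $(WW^*)_{ij}=\sum_{k=1}^n(\mu_i\mu_j)^k$ and factor out the dominant growth, writing $WW^*=A K A-E_0$ where $A=\mathrm{diag}(\mu_j^{\,n+1})$, the matrix $K_{ij}=1/(\mu_i\mu_j-1)$ is $n$-independent and positive definite (it is the Gram matrix of the linearly independent vectors $(\mu_j^{-k})_{k\geq1}$ in $\ell^2$, using the simplicity of the $\lambda_j$), and $E_0$ is $n$-independent and bounded. A variational estimate, bounding the denominator $\sum_i y_i^2/a_i^2\leq\Vert y\Vert^2/\mu_\ell^{2(n+1)}$ in the Rayleigh quotient, gives $\lambda_{\min}(AKA)\geq\lambda_{\min}(K)\,\mu_\ell^{2(n+1)}$, whence $\lambda_{\min}(WW^*)\geq\tfrac12\lambda_{\min}(K)\,\mu_\ell^{2(n+1)}$ for $n$ large, i.e. growth at rate $\mu_\ell^n=e^{\beta_\ell nT}$ as announced.

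Finally, with these two inputs, Wedin's theorem yields $\Vert(\mathrm{id}-\Pi_{n,\ell})P_\ell\Vert\leq\Vert E_n\Vert/(\sigma_\ell(P_\ell Y_n)-\Vert E_n\Vert)\leq C\,e^{-\beta_\ell nT}$ for $n$ large; combined with the reduction step this gives the claim for large $n$, and the finitely many remaining $n\geq m$ are absorbed into the constant using the trivial bound $\Vert(\mathrm{id}-\Pi_{\overline D_{n,m}})P_\ell\Vert\leq1$. I expect the main obstacle to be precisely the gap estimate of the previous paragraph: proving that the $\ell$-th singular value of the unstable snapshot block grows at the \emph{sharp} rate $e^{\beta_\ell nT}$. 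This is where the Kalman condition \eqref{generic_condition} and the simplicity of the eigenvalues enter decisively, guaranteeing both $c_j\neq0$ and the positive definiteness of the Cauchy-type matrix $K$, and it is delicate because one must control the smallest eigenvalue of a severely ill-conditioned, exponentially graded Gram matrix.
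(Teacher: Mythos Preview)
Your argument is correct and yields the same sharp rate $e^{-n\beta_\ell T}$, but it follows a genuinely different route from the paper. The paper does not invoke Wedin/Davis--Kahan or compute singular values of $P_\ell Y_n$ at all. Instead it argues directly on the last $\ell$ snapshots: from the uniform bound $\sum_{k=n-\ell+1}^n\Vert(\mathrm{id}-\Pi_{\overline D_{n,m}})y_k\Vert^2\leq C_1$ (Remark~\ref{rem_optvalue}) and the decay $\Vert(\mathrm{id}-P_\ell)y_k\Vert\leq Ce^{-\gamma kT}$ one obtains $\sum_{k=n-\ell+1}^n\Vert(\mathrm{id}-\Pi_{\overline D_{n,m}})P_\ell y_k\Vert^2\leq C_2$; writing $P_\ell y_k=P_\ell S(T)^{n-\ell+1}P_\ell\cdot P_\ell S(T)^{k-n+\ell-1}P_\ell y_0$ and using the Kalman condition \eqref{kalmanexp} to see that the vectors $P_\ell S(T)^jP_\ell y_0$, $j=0,\ldots,\ell-1$, span $E_\ell$, one concludes that the operator $(\mathrm{id}-\Pi_{\overline D_{n,m}})P_\ell S(T)^{n-\ell+1}P_\ell$ has norm bounded independently of $n$, and then inverts $\exp((n-\ell+1)TP_\ell AP_\ell)$ to extract the factor $e^{-n\beta_\ell T}$.

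The trade-off is this: the paper's proof is shorter and entirely elementary, needing no matrix perturbation theory and no analysis of the Cauchy-type Gram matrix $K$; it exploits directly that the \emph{last} $\ell$ snapshots already carry the full exponential growth of $E_\ell$. Your approach is more systematic and places the result squarely within standard singular subspace perturbation theory; it also makes transparent why the rate is exactly $\beta_\ell$ (via the explicit lower bound $\sigma_\ell(W)\gtrsim e^{\beta_\ell nT}$). The technical price you pay is the Vandermonde/Cauchy estimate, which, as you rightly anticipate, is the delicate point but is handled correctly by your factorization $WW^*=AKA-E_0$ and the positive definiteness of $K$ (guaranteed by the simplicity of the eigenvalues).
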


\begin{proof}
By Remark \ref{rem_optvalue}, there exists $C_1>0$ such that, for every $n\geq m$,
\begin{equation}\label{2.16}
\sum_{k=n-\ell+1}^n  \left\Vert y_k-\Pi_{\overline D_{n,m}}y_k\right\Vert^2\leq C_1 .
\end{equation}
Now, given any $k\in \{n-\ell+1, n-\ell+2,\dots, n\}$, we have
\begin{equation*}
\begin{split}
\left\Vert y_k- \Pi_{\overline D_{n,m}}y_k\right\Vert
&=\left\Vert (\mathrm {id}- \Pi_{\overline D_{n,m}}) P_\ell y_k +(\mathrm {id}- \Pi_{\overline D_{n,m}})(\mathrm {id} -P_\ell) y_k\right\Vert \\
& \geq
\left\Vert  (\mathrm {id}- \Pi_{\overline D_{n,m}}) P_\ell y_k\right\Vert - \left\Vert(\mathrm {id} - P_\ell)y_k\right\Vert
\end{split}
\end{equation*}
because $\Vert \mathrm {id}- \Pi_{\overline D_{n,m}}\Vert\leq 1$.
We infer that
\begin{equation}\label{2.16-1}
\left\Vert (\mathrm {id}- \Pi_{\overline D_{n,m}}) P_\ell y_k\right\Vert^2
\\
\leq  2\left\Vert y_k- \Pi_{\overline D_{n,m}}y_k\right\Vert^2 +2\left\Vert ( \mathrm {id}-P_\ell)y_k  \right\Vert^2.
\end{equation}
It follows from \eqref{ineq_sec_proof_lem_optvalue}, \eqref{2.16} and \eqref{2.16-1} that
\begin{equation}\label{2.17}
\sum^n_{k=n-\ell+1}  \left\Vert (\mathrm {id}- \Pi_{\overline D_{n,m}}) P_\ell y_k\right\Vert^2\leq C_2
\end{equation}
for some $C_2>0$ independent of $n\geq m$.
Now, for $k\in\{n-\ell+1,\ldots,n\}$, setting $j=k-n+\ell-1\in\{0,\ldots,\ell-1\}$, we have
$$
P_\ell y_k = P_\ell S(T)^{n-\ell+1}P_\ell \ P_\ell  S(T)^j P_\ell y_0
$$
and by Assumption \eqref{kalmanexp}, the elements $P_\ell  S(T)^j P_\ell y_0$, $j=0,\ldots,\ell-1$, generate the $\ell$-dimensional subspace $E_\ell$. We then infer from \eqref{2.17} that the norm of the operator $(\mathrm {id}- \Pi_{\overline D_{n,m}}) P_\ell S(T)^{n-\ell+1}P_\ell$ is bounded by a constant $C_3$ that is independent of $n\geq\ell$.
Since $P_\ell S(T)^{n-\ell+1}P_\ell = \exp( (n-\ell+1)T P_\ell AP_\ell)$ is boundedly invertible, it follows that
$$
\Vert (\mathrm {id}- \Pi_{\overline D_{n,m}})P_\ell \Vert \leq C_3 \Vert \exp( -(n-\ell+1)T P_\ell AP_\ell)\Vert
$$
The result follows by using the definition \eqref{def_betaell} of $\beta_\ell$ and the estimate of a matrix written in its canonical Jordan form.
\end{proof}

\begin{remark}
Thanks to Lemma \ref{lem_CVproj}, we have obtained that $\Pi_{\overline D_{n,m}} P_\ell \rightarrow P_\ell$ as $n\rightarrow+\infty$ (with an exponential convergence rate).
\end{remark}

\begin{lemma}\label{lem17}
Recalling that $\sigma_{n,i}$, $i\in\{1,\ldots,n\}$ are the singular values of $Y_n$ (see \eqref{singular_values}), we have
\begin{equation}\label{17}
\lim_{n\rightarrow+\infty}\sigma_{n,i}=+\infty\qquad\forall i\in\{1,\dots,\ell\}
\end{equation}
while all other singular values remain bounded.
\end{lemma}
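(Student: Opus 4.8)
The plan is to split the snapshot operator into its instable and stable parts and reduce the whole statement to a Vandermonde estimate. Writing $Y_n = P_\ell Y_n + (\mathrm{id}-P_\ell)Y_n$, the first thing I would record is that the stable part is uniformly bounded: its square Frobenius norm equals $\sum_{k=1}^n \Vert(\mathrm{id}-P_\ell)y_k\Vert^2$, which by the first item of Lemma \ref{lem_optvalue} (estimate \eqref{ineq_sec_proof_lem_optvalue}) is dominated by $C^2\sum_{k\ge 1}e^{-2\gamma kT}<+\infty$. Hence the operator norm $\Vert(\mathrm{id}-P_\ell)Y_n\Vert$ is bounded by a constant $C_0$ independent of $n$. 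The operator $P_\ell Y_n$, on the other hand, has rank at most $\ell=\dim E_\ell$, so $\sigma_i(P_\ell Y_n)=0$ for $i>\ell$.

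Next I would invoke the Weyl perturbation inequality for singular values, valid here since all operators involved are of finite rank: for every index $i$,
$$
\bigl|\sigma_i(Y_n)-\sigma_i(P_\ell Y_n)\bigr| \le \Vert(\mathrm{id}-P_\ell)Y_n\Vert \le C_0 .
$$
For $i>\ell$ this gives $\sigma_{n,i}=\sigma_i(Y_n)\le C_0$, which is the boundedness claim. For $i\le\ell$ it reduces the divergence \eqref{17} to showing that the smallest nonzero singular value of $P_\ell Y_n$ tends to $+\infty$, since then $\sigma_{n,i}(Y_n)\ge\sigma_\ell(P_\ell Y_n)-C_0\to+\infty$ for all $i\le\ell$.

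The heart of the matter is this lower bound on $\sigma_\ell(P_\ell Y_n)$, and here the spectral hypotheses enter decisively. In the orthonormal eigenbasis $(\phi_1,\ldots,\phi_\ell)$ of $P_\ell AP_\ell$ from Remark \ref{rem1}, with simple eigenvalues $\lambda_1>\cdots>\lambda_\ell=\beta_\ell>0$ by Assumptions \ref{H3}, we have $P_\ell y_k = \exp(kTP_\ell AP_\ell)P_\ell y_0 = \sum_{j=1}^\ell c_j x_j^k\,\phi_j$, where $x_j=e^{T\lambda_j}>1$ are pairwise distinct and $c_j=\langle\phi_j,P_\ell y_0\rangle\neq 0$ for every $j$ thanks to the genericity condition \eqref{generic_condition} (equivalent to \eqref{kalmanexp}). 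In this basis $P_\ell Y_n = D\,V_n$ with $D=\mathrm{diag}(c_1,\ldots,c_\ell)$ fixed and invertible, and $V_n=(x_j^{k})_{1\le j\le\ell,\,1\le k\le n}$ a rectangular generalized Vandermonde matrix. I would then bound $\sigma_\ell(V_n)$ from below by keeping only the last $\ell$ columns (deleting columns can only decrease singular values), factoring the resulting square block as $\mathrm{diag}(x_1^{n-\ell+1},\ldots,x_\ell^{n-\ell+1})\,\tilde V$ with $\tilde V=(x_j^{\,i-1})_{1\le j,i\le\ell}$ a fixed nonsingular Vandermonde matrix, and using $\sigma_{\min}(DM)\ge\sigma_{\min}(D)\,\sigma_{\min}(M)$ twice. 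Since the smallest node is $x_\ell=e^{T\beta_\ell}$, this yields $\sigma_\ell(P_\ell Y_n)\ge(\min_j|c_j|)\,\sigma_{\min}(\tilde V)\,e^{(n-\ell+1)T\beta_\ell}\to+\infty$, completing the proof.

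The main obstacle is precisely the last step: it is easy to see that the \emph{largest} singular value blows up (it is governed by the dominant node $x_1=e^{T\lambda_1}$), but showing that \emph{all} $\ell$ instable singular values diverge requires controlling the smallest one, for which the nondegeneracy of the Vandermonde structure together with $c_j\neq 0$ for all $j$ — i.e. the genericity/Kalman condition \eqref{generic_condition} — is exactly what guarantees that the instable block of the snapshots is genuinely $\ell$-dimensional rather than collapsing onto fewer directions.
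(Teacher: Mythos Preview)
Your argument is correct and takes a genuinely different route from the paper. The paper argues by contradiction: it shows the tail $\sum_{i>\ell}\sigma_{n,i}^2=\overline J_{n,\ell}$ is bounded by Lemma \ref{lem_optvalue}, and then, assuming $\sigma_{n,\ell}$ stayed bounded, deduces that $\overline J_{n,\ell-1}$ would also be bounded, which (rerunning Lemma \ref{lem_CVproj} with $m=\ell-1$) would force $\Pi_{\overline D_{n,\ell-1}}P_\ell\to P_\ell$, a rank contradiction. Your approach is direct and quantitative: you split off the stable tail via Weyl's perturbation inequality and then exploit the explicit Vandermonde structure of $P_\ell Y_n$ in the eigenbasis, using the simplicity of the eigenvalues and the genericity condition \eqref{generic_condition} to guarantee invertibility of the fixed $\ell\times\ell$ Vandermonde block. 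The payoff of your route is an explicit exponential lower bound $\sigma_{n,\ell}\gtrsim e^{(n-\ell+1)T\beta_\ell}$, which the paper's contradiction argument does not provide (the authors themselves note in the subsequent remark that their proof yields no blow-up rate). The paper's route, in exchange, avoids Weyl's inequality and the eigenbasis computation, reusing instead the machinery of Lemma \ref{lem_CVproj} already in place.
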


\begin{proof}
We already know that the singular values $\sigma_{n,i}$, $i\in\{\ell+1,\ldots,n\}$ remain bounded as $n\rightarrow +\infty$ (this is because $\sum_{i=\ell+1}^n \sigma_{n,i}^2$ is bounded as $n\rightarrow +\infty$).

Let us prove that $\sigma_{n,\ell}\rightarrow+\infty$ as $n\rightarrow +\infty$. This will imply the result for $\sigma_{n,i}$ with $i\leq\ell$ because $\sigma_{n,1}\geq\sigma_{n,2}\geq\cdots\geq\sigma_{n,\ell}$.
By contradiction, let us assume that $\sigma_{n,\ell}$ remains bounded as $n\rightarrow +\infty$. By \eqref{optvalue}, we have $\overline J_{n,\ell-1}=\sigma_{n,\ell}^2+\overline J_{n,\ell}$ and hence, using Lemma \ref{lem_optvalue}, $\overline J_{n,\ell-1}$ remains bounded as well as $n\rightarrow +\infty$. But then we can repeat the reasoning done in Remark \ref{rem_optvalue} and then in Lemma \ref{lem_CVproj}, replacing $m$ with $\ell-1$: we thus obtain that $\Pi_{\overline D_{n,\ell-1}} P_\ell \rightarrow P_\ell$ as $n\rightarrow+\infty$. Hence $\mathrm{rank}(P_\ell)\leq\liminf\mathrm{rank}(\Pi_{\overline D_{n,\ell-1}} P_\ell)$, and this raises a contradiction because $\mathrm{rank}(P_\ell)=\ell$ while $\mathrm{rank}(\Pi_{\overline D_{n,\ell-1}} P_\ell) \leq \mathrm{rank}(\Pi_{\overline D_{n,\ell-1}}) \leq \dim\overline D_{n,\ell-1}\leq\ell-1$.
\end{proof}

\begin{remark}
Since the argument is by contradiction in the above lemma, we do not know the blow-up rate of $\sigma_{n,i}$, $i\in\{1,\ldots,\ell\}$.
\end{remark}

We next consider an appropriate decomposition of the vector space $\overline D_{n,m}$. In what follows we assume that $\ell<m\leq n$ (when $m=\ell$ we have $G_{n,m,\ell} = \{0\}$ below). We already know that $D_n = \mathrm{Ran}(V_{n,n}V_{n,n}^*) = \mathrm{Span}(v_{n,1},\ldots,v_{n,n})$ and we consider the decomposition
\begin{equation}\label{decompDnm}
\overline D_{n,m} = \mathrm{Ran}(V_{n,m}V_{n,m}^*) = \mathrm{Span}(v_{n,1},\ldots,v_{n,m}) = \overline D_{n,\ell} \overset{\bot}{\oplus} G_{n,m,\ell}
\end{equation}
with
$$
\overline D_{n,\ell} = \mathrm{Span}(v_{n,1},\ldots,v_{n,\ell})\qquad\textrm{and}\qquad G_{n,m,\ell} = \mathrm{Span}(v_{n,\ell+1},\ldots,v_{n,m})
$$

\begin{lemma}\label{lemma4}
We have the following convergence properties:
\begin{align}
& \lim_{n\rightarrow+\infty}(\mathrm{id}-P_\ell)\Pi_{\overline D_{n,\ell}}=0,\qquad \lim_{n\rightarrow+\infty}A(\mathrm{id}-P_\ell)\Pi_{\overline D_{n,\ell}}=0 \label{operator2} \\
& \lim_{n\rightarrow+\infty}\Pi_{\overline D_{n,\ell}}=P_\ell \label{operator3}\\
& \lim_{n\rightarrow+\infty}\Pi_{G_{n,m,\ell}}P_\ell=0 , \qquad \lim_{n\rightarrow+\infty}P_\ell\Pi_{G_{n,m,\ell}}=0  \label{operator33}
\end{align}
and
\begin{equation}\label{operator45}
\begin{split}
& \lim_{n\rightarrow+\infty}\Pi_{\overline D_{n,\ell}} A\Pi_{\overline D_{n,\ell}} =P_\ell AP_\ell, \qquad \lim_{n\rightarrow+\infty}\Pi_{G_{n,m,\ell}} A\Pi_{\overline D_{n,\ell}}=0 Ê\\
& \lim_{n\rightarrow+\infty}\Pi_{\overline D_{n,\ell}} A\Pi_{G_{n,m,\ell}}=0,\qquad \lim_{n\rightarrow+\infty}\Pi_{G_{n,m,\ell}} P_\ell AP_\ell\Pi_{G_{n,m,\ell}}=0
\end{split}
\end{equation}
\end{lemma}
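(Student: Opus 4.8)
The plan is to reduce everything to a single quantitative estimate on the leading left singular vectors $v_{n,i}$, $i\le\ell$, of $Y_n$, and then assemble the six limits from it by elementary operator manipulations. Write $R_n=Y_nY_n^*=\sum_{k=1}^n y_ky_k^*$, so that $R_nv_{n,i}=\sigma_{n,i}^2v_{n,i}$. First I would prove that, for $n$ large enough (so that $\sigma_{n,i}>0$, by Lemma \ref{lem17}) and every $i\in\{1,\dots,\ell\}$,
\[
\|(\mathrm{id}-P_\ell)v_{n,i}\|\le\frac{C}{\sigma_{n,i}},\qquad \|A(\mathrm{id}-P_\ell)v_{n,i}\|\le\frac{C}{\sigma_{n,i}}.
\]
The key is the eigenvector relation, which gives $(\mathrm{id}-P_\ell)v_{n,i}=\sigma_{n,i}^{-2}\sum_k(\mathrm{id}-P_\ell)y_k\,\langle y_k,v_{n,i}\rangle$ (and the same with $A(\mathrm{id}-P_\ell)$ in front, since the sum is finite and each $(\mathrm{id}-P_\ell)y_k\in F_\ell\cap D(A)$); applying Cauchy--Schwarz in $k$, using $\sum_k|\langle y_k,v_{n,i}\rangle|^2=\langle R_nv_{n,i},v_{n,i}\rangle=\sigma_{n,i}^2$, and bounding both $\sum_k\|(\mathrm{id}-P_\ell)y_k\|^2$ and $\sum_k\|A(\mathrm{id}-P_\ell)y_k\|^2$ by a constant (geometric sums from the first item of Lemma \ref{lem_optvalue}) yields the claim. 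Summing over $i\le\ell$ against an arbitrary unit vector of $\overline D_{n,\ell}$ expanded in the orthonormal basis $v_{n,1},\dots,v_{n,\ell}$ gives $\|(\mathrm{id}-P_\ell)\Pi_{\overline D_{n,\ell}}\|\le(\sum_{i\le\ell}C^2/\sigma_{n,i}^2)^{1/2}$ and the analogous bound with $A(\mathrm{id}-P_\ell)$, both of which tend to $0$ by Lemma \ref{lem17}; this is \eqref{operator2}.

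For \eqref{operator3} I would combine \eqref{operator2} with Lemma \ref{lem_CVproj} applied with $m=\ell$, which gives $\|(\mathrm{id}-\Pi_{\overline D_{n,\ell}})P_\ell\|\to0$, hence $\|P_\ell(\mathrm{id}-\Pi_{\overline D_{n,\ell}})\|\to0$ by self-adjointness. Since $P_\ell$ and $\Pi_{\overline D_{n,\ell}}$ are orthogonal projections, the identity $(P_\ell-\Pi_{\overline D_{n,\ell}})x=P_\ell(\mathrm{id}-\Pi_{\overline D_{n,\ell}})x-(\mathrm{id}-P_\ell)\Pi_{\overline D_{n,\ell}}x$ has orthogonal summands, so $\|P_\ell-\Pi_{\overline D_{n,\ell}}\|^2\le\|P_\ell(\mathrm{id}-\Pi_{\overline D_{n,\ell}})\|^2+\|(\mathrm{id}-P_\ell)\Pi_{\overline D_{n,\ell}}\|^2\to0$. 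For \eqref{operator33} I would use the orthogonal decomposition \eqref{decompDnm}, i.e. $\Pi_{G_{n,m,\ell}}=\Pi_{\overline D_{n,m}}-\Pi_{\overline D_{n,\ell}}$, so that $\Pi_{G_{n,m,\ell}}P_\ell=\Pi_{\overline D_{n,m}}P_\ell-\Pi_{\overline D_{n,\ell}}P_\ell\to P_\ell-P_\ell=0$ by Lemma \ref{lem_CVproj} (with $m$ and with $\ell$); the companion limit $P_\ell\Pi_{G_{n,m,\ell}}\to0$ follows by taking adjoints.

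The four limits in \eqref{operator45} then reduce to the single norm convergence $A\Pi_{\overline D_{n,\ell}}\to P_\ell AP_\ell$ for the unbounded operator $A$. To obtain it I split $A\Pi_{\overline D_{n,\ell}}=AP_\ell\,\Pi_{\overline D_{n,\ell}}+A(\mathrm{id}-P_\ell)\Pi_{\overline D_{n,\ell}}$; the second term tends to $0$ by \eqref{operator2}, while $AP_\ell=P_\ell AP_\ell$ is bounded (by the invariance $AE_\ell\subset E_\ell$ in \ref{H1}), so $AP_\ell\,\Pi_{\overline D_{n,\ell}}\to AP_\ell P_\ell=P_\ell AP_\ell$ by \eqref{operator3}. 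Composing on the left with norm-convergent factors and using $\|\Pi_{G_{n,m,\ell}}\|\le1$ then yields the first two limits: multiplying by $\Pi_{\overline D_{n,\ell}}\to P_\ell$ gives $\Pi_{\overline D_{n,\ell}}A\Pi_{\overline D_{n,\ell}}\to P_\ell AP_\ell$, and multiplying by $\Pi_{G_{n,m,\ell}}$ together with $\Pi_{G_{n,m,\ell}}P_\ell\to0$ from \eqref{operator33} gives $\Pi_{G_{n,m,\ell}}A\Pi_{\overline D_{n,\ell}}\to0$. The third limit, $\Pi_{\overline D_{n,\ell}}A\Pi_{G_{n,m,\ell}}\to0$, is the adjoint of the second: these operators are bounded and finite-rank, and self-adjointness of $A$ gives $(\Pi_{\overline D_{n,\ell}}A\Pi_{G_{n,m,\ell}})^*=\Pi_{G_{n,m,\ell}}A\Pi_{\overline D_{n,\ell}}$, whence equal norms. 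Finally, $\Pi_{G_{n,m,\ell}}P_\ell AP_\ell\Pi_{G_{n,m,\ell}}\to0$ follows by factoring out $\Pi_{G_{n,m,\ell}}P_\ell\to0$ and bounding the remaining factor $AP_\ell\Pi_{G_{n,m,\ell}}$ by $\|AP_\ell\|$.

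I expect the only genuinely delicate point to be the core estimate of the first paragraph, where the unboundedness of $A$ must be controlled. A naive attempt to block-diagonalize $R_n$ fails, because its off-diagonal blocks are not small: indeed $\|P_\ell y_k\|$ grows exponentially while the relevant $\sigma_{n,i}$ also blow up. The eigenvector identity is precisely what converts this into a favorable $\sigma_{n,i}^{-2}$ gain, and it is essential that Lemma \ref{lem_optvalue} controls not only $\sum_k\|(\mathrm{id}-P_\ell)y_k\|^2$ but also $\sum_k\|A(\mathrm{id}-P_\ell)y_k\|^2$; this is what makes the $A$-weighted estimate in \eqref{operator2}, and hence the norm convergence $A\Pi_{\overline D_{n,\ell}}\to P_\ell AP_\ell$, go through.
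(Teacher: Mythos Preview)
Your proof is correct and follows essentially the same route as the paper. The only cosmetic differences are: (i) for \eqref{operator2} the paper obtains the same $C/\sigma_{n,i}$ bound via the SVD identity $V_{n,\ell}=Y_{n,\ell}U_{n,\ell}\Sigma_{n,\ell}^{-1}$ rather than the eigenvector relation for $R_n=Y_nY_n^*$, and (ii) for \eqref{operator45} the paper treats each of the four limits separately through the block decomposition $A=P_\ell AP_\ell+(\mathrm{id}-P_\ell)A(\mathrm{id}-P_\ell)$, whereas you first establish the single convergence $A\Pi_{\overline D_{n,\ell}}\to P_\ell AP_\ell$ and then compose---a slight streamlining, but the ingredients (Lemma~\ref{lem_optvalue}, Lemma~\ref{lem_CVproj}, Lemma~\ref{lem17}, and self-adjointness of $A$ for the third limit) are identical.
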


\begin{remark}
According to \eqref{operator3}, we have $\overline D_{n,\ell}\simeq E_\ell$ when $n$ is large enough, as announced at the end of Section \ref{sec:strategy}.
\end{remark}

\begin{proof}
Recall that $\Pi_{\overline D_{n,\ell}}=V_{n,\ell}V_{n,\ell}^*$ where $V_{n,\ell}=(v_{n,1},\ldots,v_{n,\ell})$ is a matrix whose columns form an orthonormal basis of $\overline D_{n,\ell}$ (see Section \ref{sec:SVD}). Then it suffices to prove that $(\mathrm{id}-P_\ell)\Pi_{\overline D_{n,\ell}}V_{n,\ell}\rightarrow 0$ and $A(\mathrm{id}-P_\ell)\Pi_{\overline D_{n,\ell}}V_{n,\ell}\rightarrow 0$ as $n\rightarrow+\infty$. But $\Pi_{\overline D_{n,\ell}}V_{n,\ell}=V_{n,\ell}=Y_{n,\ell}U_{n,\ell}\Sigma_{n,\ell}^{-1}$, and \eqref{operator2} follows by \eqref{ineq_sec_proof_lem_optvalue} and \eqref{17}.

By Lemma \ref{lem_CVproj}, we have $\Pi_{\overline D_{n,\ell}}P_\ell-P_\ell\rightarrow 0$ (exponentially). Taking the adjoint (and using there that $P_\ell=P_\ell^*$), we obtain that $P_\ell\Pi_{\overline D_{n,\ell}}-P_\ell\rightarrow 0$. Since $\Pi_{\overline D_{n,\ell}}-P_\ell\Pi_{\overline D_{n,\ell}}\rightarrow 0$ by \eqref{operator2}, we infer that $\Pi_{\overline D_{n,\ell}}\rightarrow P_\ell$, i.e., \eqref{operator3} is established.

Still by Lemma \ref{lem_CVproj}, using that $\Pi_{\overline D_{n,m}}P_\ell-P_\ell\rightarrow 0$ and that $\Pi_{\overline D_{n,m}}=\Pi_{\overline D_{n,\ell}}+\Pi_{G_{n,m,\ell}}$ by definition, we obtain $\Pi_{G_{n,m,\ell}}P_\ell\rightarrow 0$, and then $P_\ell\Pi_{G_{n,m,\ell}}\rightarrow 0$ by taking the adjoint. We have proved \eqref{operator33}.

Let us now establish \eqref{operator45}. Using \eqref{decompA}, we have
$$
\Pi_{\overline D_{n,\ell}} A\Pi_{\overline D_{n,\ell}} = \Pi_{\overline D_{n,\ell}} P_\ell AP_\ell\Pi_{\overline D_{n,\ell}} + \Pi_{\overline D_{n,\ell}} (\mathrm{id}-P_\ell) A(\mathrm{id}-P_\ell)\Pi_{\overline D_{n,\ell}}
$$
Using \eqref{operator3}, the first term at the right-hand side converges to $P_\ell AP_\ell$. Using the second part of \eqref{operator2} and the fact that $\Vert\Pi_{\overline D_{n,\ell}} (\mathrm{id}-P_\ell)\Vert\leq 1$, we infer that the second term converges to $0$. Hence $\Pi_{\overline D_{n,\ell}} A\Pi_{\overline D_{n,\ell}} \rightarrow P_\ell AP_\ell$.

Using \eqref{decompA} again, we have
$$
\Pi_{G_{n,m,\ell}} A\Pi_{\overline D_{n,\ell}} = \Pi_{G_{n,m,\ell}} P_\ell P_\ell AP_\ell\Pi_{\overline D_{n,\ell}} + \Pi_{G_{n,m,\ell}} (\mathrm{id}-P_\ell) A(\mathrm{id}-P_\ell)\Pi_{\overline D_{n,\ell}}
$$
The first term at the right-hand side converges to $0$ because the range of $P_\ell AP_\ell\Pi_{\overline D_{n,\ell}}$ is finite-dimensional and $\Pi_{G_{n,m,\ell}} P_\ell\rightarrow 0$ by \eqref{operator33}. The second term converges to $0$ because $A(\mathrm{id}-P_\ell)\Pi_{\overline D_{n,\ell}}\rightarrow 0$ by \eqref{operator2} and $\Vert\Pi_{G_{n,m,\ell}} (\mathrm{id}-P_\ell)\Vert\leq 1$. Hence $\Pi_{G_{n,m,\ell}} A\Pi_{\overline D_{n,\ell}}\rightarrow 0$.

Taking the adjoint and using the fact that $A$ is selfadjoint (this is the only place where this assumption is actually crucial), we obtain that $\Pi_{\overline D_{n,\ell}} A\Pi_{G_{n,m,\ell}}\rightarrow 0$.


Finally, $\Pi_{G_{n,m,\ell}} P_\ell AP_\ell\Pi_{G_{n,m,\ell}} = \Pi_{G_{n,m,\ell}} P_\ell AP_\ell P_\ell\Pi_{G_{n,m,\ell}}$ converges to $0$ because $P_\ell\Pi_{G_{n,m,\ell}}\rightarrow 0$ by \eqref{operator33} and $\Vert\Pi_{G_{n,m,\ell}} P_\ell AP_\ell \Vert\leq  \Vert P_\ell AP_\ell\Vert$ is uniformly bounded.
\end{proof}

\begin{lemma}\label{lemma5}
If $m\geq\ell$ then the POD reduced-order control system \eqref{reduced_control_system} is stabilizable whenever $n$ is large enough.
\end{lemma}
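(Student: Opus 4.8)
The plan is to reduce stabilizability of $(A_{n,m},B_{n,m})$ to controllability of its unstable part, and then to transfer the controllability of $(P_\ell AP_\ell,P_\ell B)$ granted by \ref{H4} through the convergences of Lemma \ref{lemma4}. The starting observation is that, since $\overline D_{n,m}\subset D(A)$ and $A$ is selfadjoint, the matrix $A_{n,m}=\Pi_{\overline D_{n,m}}A\Pi_{\overline D_{n,m}}$ is symmetric, hence has real spectrum and satisfies $\langle A_{n,m}z,z\rangle=\langle Az,z\rangle$ for $z\in\overline D_{n,m}$. I would use the orthogonal splitting \eqref{decompDnm} to write $A_{n,m}$ and $B_{n,m}=\Pi_{\overline D_{n,m}}B$ in $2\times2$ block form relative to $\overline D_{n,\ell}\overset{\bot}{\oplus}G_{n,m,\ell}$, with diagonal blocks $A_{11}=\Pi_{\overline D_{n,\ell}}A\Pi_{\overline D_{n,\ell}}$ and $A_{22}=\Pi_{G_{n,m,\ell}}A\Pi_{G_{n,m,\ell}}$, off-diagonal blocks $A_{12},A_{21}$, and $B_1=\Pi_{\overline D_{n,\ell}}B$, $B_2=\Pi_{G_{n,m,\ell}}B$.

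First I would show that $A_{22}$ is uniformly negative definite for $n$ large. For a unit $z\in G_{n,m,\ell}$, \eqref{decompA} and selfadjointness give $\langle A_{22}z,z\rangle=\langle Az,z\rangle=\langle P_\ell AP_\ell z,P_\ell z\rangle+\langle(\mathrm{id}-P_\ell)A(\mathrm{id}-P_\ell)z,z\rangle$; since \ref{H2} forces the selfadjoint operator $(\mathrm{id}-P_\ell)A(\mathrm{id}-P_\ell)$ to have spectrum in $(-\infty,-\gamma]$, the second term is $\leq-\gamma\|(\mathrm{id}-P_\ell)z\|^2$, and since $P_\ell\Pi_{G_{n,m,\ell}}\to0$ by \eqref{operator33} we have $\|P_\ell z\|\to0$ uniformly over such $z$, whence $\langle A_{22}z,z\rangle\leq-\gamma/2$ for $n$ large. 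As $A_{22}$ is symmetric, all its eigenvalues are then $\leq-\gamma/2$, so for every $\lambda$ with $\Real\lambda\geq0$ one gets the uniform resolvent bound $\|(\lambda I-A_{22})^{-1}\|\leq 2/\gamma$.

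Next I would run the Hautus test: $(A_{n,m},B_{n,m})$ is stabilizable iff $[\lambda I-A_{n,m}\mid B_{n,m}]$ has full row rank $m$ for every $\lambda$ with $\Real\lambda\geq0$. The bottom block rows $[-A_{21}\mid\lambda I-A_{22}\mid B_2]$ have full rank $m-\ell$ by invertibility of $\lambda I-A_{22}$, and eliminating with them reduces the rank to $(m-\ell)+\mathrm{rank}[\Lambda(\lambda)\mid\tilde B(\lambda)]$, where $\Lambda(\lambda)=\lambda I-A_{11}-A_{12}(\lambda I-A_{22})^{-1}A_{21}$ and $\tilde B(\lambda)=B_1+A_{12}(\lambda I-A_{22})^{-1}B_2$. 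By \eqref{operator45}, $A_{12},A_{21}\to0$ and $A_{11}\to P_\ell AP_\ell$; by \eqref{operator3}, $B_1\to P_\ell B$, while $\|B_2\|\leq\|B\|$; together with the uniform resolvent bound this yields $\Lambda(\lambda)\to\lambda I-P_\ell AP_\ell$ and $\tilde B(\lambda)\to P_\ell B$ \emph{uniformly} in $\lambda$ on $\{\Real\lambda\geq0\}$. Since $(P_\ell AP_\ell,P_\ell B)$ satisfies the Kalman condition \ref{H4}, the matrix $[\lambda I-P_\ell AP_\ell\mid P_\ell B]$ has full row rank $\ell$ for all $\lambda$, and a compactness argument (its smallest singular value blows up as $|\lambda|\to\infty$ and is positive on the compact part of $\{\Real\lambda\geq0\}$) gives a uniform lower bound $\delta>0$. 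By the uniform convergence, $[\Lambda(\lambda)\mid\tilde B(\lambda)]$ keeps full row rank $\ell$ for $n$ large, so $[\lambda I-A_{n,m}\mid B_{n,m}]$ has rank $m$ throughout $\{\Real\lambda\geq0\}$, proving stabilizability.

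The main delicate point is the uniform control of the stable block: because $A$ is unbounded, the compression $A_{22}$ may have arbitrarily large negative eigenvalues and its norm may blow up with $n$, so it is only the selfadjointness of $A$—which makes $A_{n,m}$ symmetric and reduces the resolvent estimate to the scalar gap $\mathrm{dist}(\lambda,\mathrm{Spec}(A_{22}))\geq\gamma/2$—that keeps $(\lambda I-A_{22})^{-1}$ uniformly bounded on the closed right half-plane and makes the Schur reduction legitimate. A secondary bookkeeping point is the moving finite-dimensional spaces $\overline D_{n,\ell}$ and $G_{n,m,\ell}$: fixing an orthonormal basis of $E_\ell$ and transporting it by the convergent projections $\Pi_{\overline D_{n,\ell}}\to P_\ell$ turns the operator convergences of Lemma \ref{lemma4} into genuine matrix convergences, after which the uniform Hautus estimate applies.
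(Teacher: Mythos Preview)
Your proof is correct and follows the same structural route as the paper: the same block decomposition relative to $\overline D_{n,\ell}\overset{\bot}{\oplus}G_{n,m,\ell}$, the same convergences from Lemma~\ref{lemma4}, and the same key observation that the $G_{n,m,\ell}$-block is uniformly stable while the $\overline D_{n,\ell}$-block inherits controllability from \ref{H4}. The differences are in execution rather than idea. For the stability of $A_{22}$, the paper proves this as a separate Lemma~\ref{lem_H} via the Lumer--Phillips theorem (dissipativity of $(\mathrm{id}-P_\ell)(A+\gamma\,\mathrm{id})(\mathrm{id}-P_\ell)$), whereas you argue directly with the quadratic form; both are fine and exploit selfadjointness in the same way. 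For the conclusion, the paper observes that $A_{n,m}$ is asymptotically block diagonal with a Hurwitz lower block and a Kalman upper block and simply says ``the conclusion is now obvious,'' leaving the perturbation step implicit; your Hautus--Schur argument makes that step explicit, and the uniform resolvent bound $\|(\lambda I-A_{22})^{-1}\|\leq 2/\gamma$ is exactly what is needed to control the Schur complement uniformly over the closed right half-plane despite the possibly unbounded norm of $A_{22}$. Your version is thus a more self-contained write-up of the same argument.
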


\begin{proof}
Without loss of generality, we assume that $m>\ell$. In the decomposition \eqref{decompDnm}, we have
$$A_{n,m} =
\begin{pmatrix}
\Pi_{\overline D_{n,\ell}} A\Pi_{\overline D_{n,\ell}} & \Pi_{\overline D_{n,\ell}} A\Pi_{G_{n,m,\ell}} \\
\Pi_{G_{n,m,\ell}} A\Pi_{\overline D_{n,\ell}} & \Pi_{G_{n,m,\ell}} A\Pi_{G_{n,m,\ell}}
\end{pmatrix}
\qquad\textrm{and}\qquad
B_{n,m} = \begin{pmatrix}
\Pi_{\overline D_{n,\ell}}B
\\ \Pi_{G_{n,m,\ell}}B
\end{pmatrix} .
$$
Using Lemma \ref{lemma4} and the fact that
\begin{multline*}
\Pi_{G_{n,m,\ell}} A\Pi_{G_{n,m,\ell}} = \Pi_{G_{n,m,\ell}} P_\ell AP_\ell\Pi_{G_{n,m,\ell}} + \Pi_{G_{n,m,\ell}} (\mathrm{id}-P_\ell)A(\mathrm{id}-P_\ell)\Pi_{G_{n,m,\ell}} \\
\sim \Pi_{G_{n,m,\ell}} (\mathrm{id}-P_\ell)A(\mathrm{id}-P_\ell)\Pi_{G_{n,m,\ell}}
\end{multline*}
we infer that
\begin{equation}\label{equivAB}
A_{n,m} \sim \begin{pmatrix}
P_\ell AP_\ell & 0 \\ 0 & \Pi_{G_{n,m,\ell}} (\mathrm{id}-P_\ell)A(\mathrm{id}-P_\ell)\Pi_{G_{n,m,\ell}}
\end{pmatrix}
\qquad\textrm{and}\qquad
B_{n,m} = \begin{pmatrix}
P_\ell B\\
\Pi_{G_{n,m,\ell}}B
\end{pmatrix}
\end{equation}
as $n\rightarrow+\infty$.
Assumption \ref{H2} implies (by a Laplace transform argument) that 
any (real-valued) eigenvalue $\lambda$ of $(\mathrm{id}-P_\ell)A(\mathrm{id}-P_\ell)$ satisfies $\lambda\leq-\gamma$, hence the matrix $\Pi_{G_{n,m,\ell}} (\mathrm{id}-P_\ell)A(\mathrm{id}-P_\ell)\Pi_{G_{n,m,\ell}}$ is uniformly Hurwitz with respect to $n$ large enough, by Lemma \ref{lem_H} hereafter.
Since the pair $(P_\ell AP_\ell,P_\ell B)$ satisfies the Kalman condition (Assumption \ref{H4}), the conclusion is now obvious.
\end{proof}

\begin{lemma}\label{lem_H}
The matrix $\Pi_{G_{n,m,\ell}} (\mathrm{id}-P_\ell)A(\mathrm{id}-P_\ell)\Pi_{G_{n,m,\ell}}$ is uniformly Hurwitz with respect to $n$.
\end{lemma}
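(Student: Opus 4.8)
The plan is to exploit the fact that, because $A$ and $P_\ell$ are selfadjoint, the matrix
\[
M_n := \Pi_{G_{n,m,\ell}}(\mathrm{id}-P_\ell)A(\mathrm{id}-P_\ell)\Pi_{G_{n,m,\ell}}
\]
is symmetric, so that its eigenvalues are real and the ``uniformly Hurwitz'' property reduces to finding a constant $c>0$, independent of $n$, such that the largest eigenvalue of $M_n$ is $\leq -c$ for all $n$ large enough. By the Rayleigh--Ritz characterization of the top eigenvalue of a symmetric matrix, it therefore suffices to bound the quadratic form $\langle M_n z,z\rangle$ from above by $-c\Vert z\Vert^2$, uniformly over $z\in G_{n,m,\ell}$.

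First I would compute this quadratic form. Fix $z\in G_{n,m,\ell}$, so that $\Pi_{G_{n,m,\ell}}z=z$; since $G_{n,m,\ell}\subset\overline D_{n,m}\subset D_n\subset D(A)$, the vector $w:=(\mathrm{id}-P_\ell)z$ lies in $F_\ell\cap D(A)$. Using the selfadjointness and idempotency of $\Pi_{G_{n,m,\ell}}$ and of $\mathrm{id}-P_\ell$, one gets
\[
\langle M_n z,z\rangle=\langle(\mathrm{id}-P_\ell)A(\mathrm{id}-P_\ell)z,z\rangle=\langle Aw,w\rangle .
\]
The remaining analytic ingredient is the dissipativity estimate $\langle Aw,w\rangle\leq-\gamma\Vert w\Vert^2$ for every $w\in F_\ell\cap D(A)$, which I would derive from Assumption \ref{H2}: differentiating $t\mapsto\Vert(\mathrm{id}-P_\ell)S(t)(\mathrm{id}-P_\ell)w\Vert^2\leq e^{-2\gamma t}\Vert w\Vert^2$ at $t=0$ yields $2\langle Aw,w\rangle\leq-2\gamma\Vert w\Vert^2$ (equivalently, since the restriction of $A$ to $F_\ell$ is selfadjoint, this is just the spectral bound $\mathrm{Spec}\subset(-\infty,-\gamma]$ already invoked in the proof of Lemma \ref{lemma5}). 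Hence $\langle M_n z,z\rangle\leq-\gamma\Vert(\mathrm{id}-P_\ell)z\Vert^2$.

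It remains to convert $\Vert(\mathrm{id}-P_\ell)z\Vert^2$ into $\Vert z\Vert^2$ uniformly in $n$, and this is where the POD convergence enters. For $z\in G_{n,m,\ell}$ we have $P_\ell z=P_\ell\Pi_{G_{n,m,\ell}}z$, so $\Vert P_\ell z\Vert\leq\Vert P_\ell\Pi_{G_{n,m,\ell}}\Vert\,\Vert z\Vert$, and by \eqref{operator33} the operator norm $\Vert P_\ell\Pi_{G_{n,m,\ell}}\Vert$ tends to $0$ as $n\to+\infty$. Choosing $n_0$ so that $\Vert P_\ell\Pi_{G_{n,m,\ell}}\Vert\leq\tfrac12$ for $n\geq n_0$, the orthogonal splitting $\Vert z\Vert^2=\Vert P_\ell z\Vert^2+\Vert(\mathrm{id}-P_\ell)z\Vert^2$ gives $\Vert(\mathrm{id}-P_\ell)z\Vert^2\geq\tfrac34\Vert z\Vert^2$, whence $\langle M_n z,z\rangle\leq-\tfrac{3\gamma}{4}\Vert z\Vert^2$. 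This bounds the largest eigenvalue of $M_n$ by $-3\gamma/4$ for all $n\geq n_0$, which is exactly the uniform Hurwitz property used in the proof of Lemma \ref{lemma5}.

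The only genuinely delicate point is the uniformity: for a vector $z\in E_\ell$ the quadratic form $\langle M_nz,z\rangle$ vanishes, so without further information $M_n$ need not even be Hurwitz. The estimate \eqref{operator33}, which expresses that $G_{n,m,\ell}$ becomes asymptotically orthogonal to the unstable subspace $E_\ell$, is precisely what excludes this degeneracy for $n$ large and supplies the $n$-independent constant.
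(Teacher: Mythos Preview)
Your proof is correct and follows essentially the same approach as the paper: both derive the dissipativity estimate $\langle Aw,w\rangle\leq-\gamma\Vert w\Vert^2$ on $F_\ell\cap D(A)$ from Assumption~\ref{H2} (you by differentiating the semigroup bound at $t=0$, the paper by invoking the converse Lumer--Phillips theorem), then compress to $G_{n,m,\ell}$ and use \eqref{operator33} to absorb the discrepancy between $(\mathrm{id}-P_\ell)\Pi_{G_{n,m,\ell}}$ and $\Pi_{G_{n,m,\ell}}$. Your write-up is slightly more hands-on (Rayleigh--Ritz instead of the Lumer--Phillips packaging), but the substance is identical.
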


\begin{proof}
It follows from Assumption \ref{H2} that the operator $(\mathrm{id}-P_\ell)(A+\gamma\,\mathrm{id})(\mathrm{id}-P_\ell)$ generates the contraction $C_0$ semigroup $\left( (\mathrm{id}-P_\ell) e^{\gamma t} S(t) (\mathrm{id}-P_\ell) \right)_{t\geq 0}$. By the converse of the Lumer-Phillips theorem (see \cite[Chapter 1, Section 1.4, Theorem 4.3 and Corollary 4.4]{Pazy}), the (selfadjoint) operator $(\mathrm{id}-P_\ell)(A+\gamma\,\mathrm{id})(\mathrm{id}-P_\ell)$ is dissipative, which means that
$$
\langle (\mathrm{id}-P_\ell)(A+\gamma\,\mathrm{id})(\mathrm{id}-P_\ell)y,y\rangle \leq 0\qquad \forall y\in H .
$$
Hence
$$
\langle \Pi_{G_{n,m,\ell}}(\mathrm{id}-P_\ell)(A+\gamma\,\mathrm{id})(\mathrm{id}-P_\ell)\Pi_{G_{n,m,\ell}}y,y\rangle \leq 0\qquad \forall y\in H .
$$
By the Lumer-Phillips theorem, $\Pi_{G_{n,m,\ell}}(\mathrm{id}-P_\ell)(A+\gamma\,\mathrm{id})(\mathrm{id}-P_\ell)\Pi_{G_{n,m,\ell}}$ generates a contraction semigroup, hence, using \eqref{operator33} and the fact that $\Pi_{G_{n,m,\ell}}(\mathrm{id}-P_\ell)(\mathrm{id}-P_\ell)\Pi_{G_{n,m,\ell}}\sim \Pi_{G_{n,m,\ell}}$ as $n\rightarrow+\infty$, we infer that the semigroup generated by $\Pi_{G_{n,m,\ell}} (\mathrm{id}-P_\ell)A(\mathrm{id}-P_\ell)\Pi_{G_{n,m,\ell}}$ decreases exponentially (like $e^{-\gamma t}$). The lemma is proved.
\end{proof}

\subsection{Proof of Theorem \ref{mainthm}}\label{sec:proofs}
We are now in a position to prove Theorem \ref{mainthm}.
With respect to the decomposition $H =E_\ell\overset{\bot}{\oplus} F_\ell$, thanks to \eqref{spectralproperties}, the operator $A + B K_{n,m}(\varepsilon) \Pi_{\overline D_{n,m}}$ is written as the infinite-dimensional matrix
\begin{equation}\label{splits}
\left(\begin{array}{c|c}
P_\ell (A + B K_{n,m}(\varepsilon)  \Pi_{\overline D_{n,m}}) P_\ell & P_\ell B K_{n,m}(\varepsilon)  \Pi_{\overline D_{n,m}} (\mathrm{id}-P_\ell) \\[2mm] \hline
\\[-2mm]
(\mathrm{id}-P_\ell)  B K_{n,m}(\varepsilon)  \Pi_{\overline D_{n,m}}P_\ell & (\mathrm{id}-P_\ell) (A + B K_{n,m}(\varepsilon)  \Pi_{\overline D_{n,m}}) (\mathrm{id}-P_\ell)
\end{array}\right)
\end{equation}
and the exponential stability of \eqref{stateclosedloop} is equivalent to that of \eqref{splits}.

The idea is the following.
By Proposition \ref{prop_riccati2} in Appendix \ref{app:riccati}, when $\varepsilon$ is small and $n$ is large enough, the feedback matrix $K_{n,m}(\varepsilon)$ essentially acts on the $\ell$ first modes that are instable, and  $K_{n,m}(\varepsilon) \Pi_{\overline D_{n,m}} (\mathrm{id}-P_\ell)$ converges to $0$ as $\varepsilon\rightarrow 0$ and $n\rightarrow+\infty$. Therefore, if $\varepsilon$ is small enough and $n$ is large enough then \eqref{splits} is almost lower block triangular, with the $\ell\times\ell$ block at the top left being uniformly Hurwitz and the infinite-dimensional block at the bottom right (close to $(\mathrm{id}-P_\ell) A (\mathrm{id}-P_\ell)$) generating a uniformly exponentially stable semigroup.

We now give the detail of the arguments.

\begin{lemma}\label{lemm7}
Given any $m\geq \ell$, the $\ell\times\ell$ matrix $P_\ell (A + B K_{n,m}(\varepsilon)  \Pi_{\overline D_{n,m}}) P_\ell$ is uniformly Hurwitz 
and
\begin{equation}\label{K_nm}
K_{n,m}(\varepsilon)  \Pi_{\overline D_{n,m}} (\mathrm{id}-P_\ell) = \mathrm{o}(1)
\end{equation}
as $\varepsilon\rightarrow 0$ and $n\rightarrow +\infty$.
\end{lemma}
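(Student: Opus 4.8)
The plan is to work throughout in the orthogonal decomposition $\overline D_{n,m} = \overline D_{n,\ell}\overset{\bot}{\oplus}G_{n,m,\ell}$ of \eqref{decompDnm}, feeding the convergence properties of Lemma \ref{lemma4} into the asymptotic Riccati result of Proposition \ref{prop_riccati2} in Appendix \ref{app:riccati}. From the latter I extract the three facts used below: the feedback $K_{n,m}(\varepsilon)$ is uniformly bounded for $\varepsilon$ small and $n$ large; it asymptotically ignores the stable block, in the sense that $K_{n,m}(\varepsilon)\Pi_{G_{n,m,\ell}} = \mathrm{o}(1)$; and the upper-left (instable) block of the reduced closed-loop matrix is uniformly Hurwitz, its spectral abscissa being pushed into the open left half-plane because the $\varepsilon\to 0$ Riccati feedback for the anti-stable pair $(P_\ell AP_\ell, P_\ell B)$ reflects the instable eigenvalues across the imaginary axis.

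To prove \eqref{K_nm}, I split $\Pi_{\overline D_{n,m}} = \Pi_{\overline D_{n,\ell}} + \Pi_{G_{n,m,\ell}}$ and treat the two resulting terms separately. Taking the adjoint of the first relation in \eqref{operator2} gives $\Pi_{\overline D_{n,\ell}}(\mathrm{id}-P_\ell)\to 0$, so that $K_{n,m}(\varepsilon)\Pi_{\overline D_{n,\ell}}(\mathrm{id}-P_\ell)\to 0$ by uniform boundedness of $K_{n,m}(\varepsilon)$. For the second term I write it as $\bigl(K_{n,m}(\varepsilon)\Pi_{G_{n,m,\ell}}\bigr)(\mathrm{id}-P_\ell)$ and combine $K_{n,m}(\varepsilon)\Pi_{G_{n,m,\ell}}=\mathrm{o}(1)$ with $\Vert\mathrm{id}-P_\ell\Vert\le 1$. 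Adding the two estimates yields \eqref{K_nm}, which in turn shows that the upper-right block $P_\ell BK_{n,m}(\varepsilon)\Pi_{\overline D_{n,m}}(\mathrm{id}-P_\ell)$ of \eqref{splits} is $\mathrm{o}(1)$, i.e. \eqref{splits} is asymptotically lower block triangular.

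For the uniform Hurwitz property I start from \[ P_\ell(A+BK_{n,m}(\varepsilon)\Pi_{\overline D_{n,m}})P_\ell = P_\ell AP_\ell + P_\ell B\,K_{n,m}(\varepsilon)\Pi_{\overline D_{n,m}}P_\ell. \] Using $\Pi_{G_{n,m,\ell}}P_\ell\to 0$ from \eqref{operator33}, the identity $\Pi_{\overline D_{n,\ell}}P_\ell - \Pi_{\overline D_{n,\ell}} = -\Pi_{\overline D_{n,\ell}}(\mathrm{id}-P_\ell)\to 0$, and the uniform boundedness of $K_{n,m}(\varepsilon)$, I replace $\Pi_{\overline D_{n,m}}P_\ell$ by $\Pi_{\overline D_{n,\ell}}$ at the cost of an $\mathrm{o}(1)$ term, obtaining \[ P_\ell(A+BK_{n,m}(\varepsilon)\Pi_{\overline D_{n,m}})P_\ell = P_\ell AP_\ell + P_\ell B\,K_{n,m}(\varepsilon)\Pi_{\overline D_{n,\ell}} + \mathrm{o}(1). \] By \eqref{operator45} together with $\Pi_{\overline D_{n,\ell}}B\to P_\ell B$ (a consequence of \eqref{operator3}), the right-hand side equals, up to $\mathrm{o}(1)$, the upper-left block $\Pi_{\overline D_{n,\ell}}(A_{n,m}+B_{n,m}K_{n,m}(\varepsilon))\Pi_{\overline D_{n,\ell}}$ of the reduced closed-loop matrix. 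Since that block is uniformly Hurwitz by Proposition \ref{prop_riccati2}, and a uniform $\mathrm{o}(1)$ perturbation keeps the spectral abscissa bounded above by a negative constant, the matrix $P_\ell(A+BK_{n,m}(\varepsilon)\Pi_{\overline D_{n,m}})P_\ell$ is uniformly Hurwitz as well.

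The delicate point, and the only non-routine one, is the uniformity in $n$: because $A_{n,m}$ and $B_{n,m}$ themselves vary with $n$ and only converge to their block-diagonal limit, the classical one-shot Riccati asymptotics as $\varepsilon\to 0$ must be made uniform over the convergent family $\{(A_{n,m},B_{n,m})\}_{n}$ simultaneously with the limit $\varepsilon\to 0$. This joint uniformity is precisely the content of Proposition \ref{prop_riccati2}; granting it, all the remaining steps are elementary limits assembled from Lemma \ref{lemma4}.
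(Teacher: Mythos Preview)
Your proof is correct and follows essentially the same approach as the paper: both work in the decomposition $\overline D_{n,m}=\overline D_{n,\ell}\overset{\bot}{\oplus}G_{n,m,\ell}$, feed the convergence results of Lemma~\ref{lemma4} into Proposition~\ref{prop_riccati2} (with $\alpha\sim 1/n$), and then identify $P_\ell(A+BK_{n,m}(\varepsilon)\Pi_{\overline D_{n,m}})P_\ell$ with the upper-left block of the reduced closed-loop matrix up to $\mathrm{o}(1)$. The only cosmetic difference is that the paper extracts the explicit limit $K_{n,m}(\varepsilon)=-\bigl(B^*P_\ell P_{\infty,m}^1(0),\,0\bigr)+\mathrm{o}(1)$ from \eqref{a.55} and reads off both \eqref{K_nm} and the Hurwitz limit matrix $P_\ell(A-BB^*P_\ell P_{\infty,m}^1(0))P_\ell$ directly, whereas you reach the same conclusions via the two auxiliary facts (uniform boundedness of $K_{n,m}(\varepsilon)$ and $K_{n,m}(\varepsilon)\Pi_{G_{n,m,\ell}}=\mathrm{o}(1)$) that are immediate consequences of that formula.
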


\begin{proof}
Without loss of generality, we assume that $m>\ell$.
Let us consider the POD reduced-order control system \eqref{reduced_control_system}.
In the decomposition $\overline D_{n,m}=\overline D_{n,\ell}\overset{\bot}{\oplus}G_{n,m,\ell}$ (see \eqref{decompDnm}), we have
$$
A_{n,m} =\begin{pmatrix}
\Pi_{\overline D_{n,\ell}} A\Pi_{\overline D_{n,\ell}}  & \Pi_{\overline D_{n,\ell}} A\Pi_{G_{n,m,\ell}}\\
\Pi_{G_{n,m,\ell}} A\Pi_{\overline D_{n,\ell}} &\Pi_{G_{n,m,\ell}} A\Pi_{G_{n,m,\ell}}
\end{pmatrix}
\qquad\textrm{and}\qquad
B_{n,m}=\begin{pmatrix}
\Pi_{\overline D_{n,\ell}}B\\
\Pi_{G_{n,m,\ell}} B
\end{pmatrix}
$$
and accordingly, the maximal positive semidefinite solution $P_{n,m}(\varepsilon)$ of the algebraic Riccati equation \eqref{RiccatiPnm} is written as
$$
P_{n,m}(\varepsilon) = \begin{pmatrix}
P_{n,m}^1(\varepsilon) & P_{n,m}^3(\varepsilon) \\ P_{n,m}^3(\varepsilon)^* & P_{n,m}^2(\varepsilon)
\end{pmatrix}
$$
and
\begin{equation*}
\begin{split}
K_{n,m}(\varepsilon)
&= -B_{n,m}^*P_{n,m}(\varepsilon) \\
&= - \left( B^*\Pi_{\overline D_{n,\ell}} P_{n,m}^1(\varepsilon) + B^*\Pi_{G_{n,m,\ell}} P_{n,m}^3(\varepsilon)^* ,  B^*\Pi_{\overline D_{n,\ell}} P_{n,m}^3(\varepsilon) + B^*\Pi_{G_{n,m,\ell}} P_{n,m}^2(\varepsilon) \right)
\end{split}
\end{equation*}
We apply Proposition \ref{prop_riccati2} in Appendix \ref{app:riccati}, roughly speaking, with $\alpha=\frac{1}{n}$. More precisely, the matrices $A(\alpha)$ and $B(\alpha)$ of Appendix \ref{app:riccati} are any continuous interpolations at $\alpha=\frac{1}{n}$, $n\in\N^*$, of the matrices $A_{n,m}$ and $B_{n,m}$. By Lemma \ref{lemma4}, all assumptions done in Appendix \ref{app:riccati} are satisfied, because we have \eqref{equivAB} with $-P_\ell AP_\ell$ being Hurwitz and $\Pi_{G_{n,m,\ell}} (\mathrm{id}-P_\ell)A(\mathrm{id}-P_\ell)\Pi_{G_{n,m,\ell}}$ being uniformly Hurwitz (by Lemma \ref{lem_H}) and the pair $(P_\ell AP_\ell, P_\ell B)$ satisfies the Kalman condition by \ref{H4}.
According to Proposition \ref{prop_riccati2}, we have $P_{n,m}(\varepsilon)\rightarrow P_{\infty,m}(0)$ as $\varepsilon\rightarrow 0$ and $n\rightarrow +\infty$, with $P_{\infty,m}^1(0)$ symmetric positive semidefinite, and $A_{n,m}-B_{n,m}B_{n,m}^*P_{n,m}(\varepsilon)$ is uniformly Hurwitz with respect to $\varepsilon$ small enough and $n$ large enough. Moreover, by \eqref{a.55}, using \eqref{operator3}, we have
$$
K_{n,m}(\varepsilon)  = - \left( B^*P_\ell P_{\infty,m}^1(0) ,  0  \right) + \mathrm{o}(1)
$$
as $\varepsilon\rightarrow 0$ and $n\rightarrow +\infty$.
This is more than required to obtain \eqref{K_nm}.
Moreover, using \eqref{equivAB}, we have
\begin{equation}\label{uselemm7}
A_{n,m}+B_{n,m}K_{n,m}(\varepsilon) \sim
\begin{pmatrix}
P_\ell ( A - BB^*P_\ell P_{\infty,m}^1(0) ) P_\ell & 0 \\
\Pi_{G_{n,m,\ell}}B B^*P_\ell P_{\infty,m}^1(0) & \Pi_{G_{n,m,\ell}} (\mathrm{id}-P_\ell)A(\mathrm{id}-P_\ell)\Pi_{G_{n,m,\ell}}
\end{pmatrix}
\end{equation}
as $\varepsilon\rightarrow 0$ and $n\rightarrow +\infty$.
But the block at the top left is equivalent to $P_\ell (A + B K_{n,m}(\varepsilon)  \Pi_{\overline D_{n,m}}) P_\ell$, indeed:
$$
P_\ell (A + B K_{n,m}(\varepsilon)  \Pi_{\overline D_{n,m}}) P_\ell
= P_\ell ( A - B B^* P_\ell P_{\infty,m}^1(0) ) P_\ell +\mathrm{o}(1)
$$
as $\varepsilon\rightarrow 0$ and $n\rightarrow +\infty$.

It follows that $P_\ell (A + B K_{n,m}(\varepsilon)  \Pi_{\overline D_{n,m}}) P_\ell$, which is the block on the top left of \eqref{splits}, is uniformly Hurwitz for $\varepsilon$ small enough and $n$ large enough.
\end{proof}

Theorem \ref{mainthm} is proved.

To finish, let us prove \eqref{decayrate} in Remark \ref{rem_exprate}. As said above, in approximation, the infinite-dimensional matrix \eqref{splits} is lower block triangular. The block at the bottom right is approximately equal to $(\mathrm{id}-P_\ell)A(\mathrm{id}-P_\ell)$, which generates a semigroup that is exponentially stable, with decay rate $\gamma$ (given by \ref{H2}).
The block at the top left is treated thanks to the proof of Lemma \ref{lemm7}. By \eqref{uselemm7}, $P_\ell(A+BK_{n,m}(\varepsilon)\Pi_{\overline D_{n,m}})P_\ell$ is approximately equal to the block at the top left of the lower block triangular matrix $A_{n,m}+B_{n,m}K_{n,m}(\varepsilon)$. Therefore, the spectral abscissa of $A_{n,m}+B_{n,m}K_{n,m}(\varepsilon)$ is approximately equal to the minimum of the spectral abscissa of $P_\ell(A+BK_{n,m}(\varepsilon)\Pi_{\overline D_{n,m}})P_\ell$ and of the spectral abscissa of $\Pi_{G_{n,m,\ell}}(\mathrm{id}-P_\ell)A(\mathrm{id}-P_\ell)\Pi_{G_{n,m,\ell}}$ (which is greater than or equal to $\gamma$).
The formula \eqref{decayrate} follows.

\section{Conclusion and perspectives}\label{sec:ccl}
Considering a linear autonomous control system in infinite dimension, the Proper Orthogonal Decomposition (POD) approach generates a finite-dimensional reduced-order model, which can be used to generate low-order controls. Applying the Riccati theory to the reduced-order control system leads to a finite-dimensional linear stabilizing feedback. In this paper, we have proved that, under appropriate assumptions, this reduced-order feedback exponentially stabilizes as well the whole infinite-dimensional control system.

Our assumptions are of spectral nature. The operator $A$ underlying the control system is assumed to be selfadjoint, and the system with null control is assumed to have a finite-dimensional instable part and an infinite-dimensional stable part. Our assumptions involve the case of heat-like equations with internal control, but not the case of damped wave equations.

Several comments and open issues are in order.

\begin{itemize}

\item In the POD method, we have taken snapshots at times $t_k=kT$. The choice of such regular snapshots is compatible with the application of the Kalman condition in our proofs. Treating the case of other, more random snapshots $t_k$ is an interesting issue.

\item It is likely that, under controllability assumptions, one can obtain a result of rapid stabilization in the following sense: given any $\gamma>0$, find a low-order feedback matrix $K_{n,m}^\gamma$, designed from a POD approximation, such that $\|y(t)\|\leq C(\gamma)e^{-\gamma t}\|y(0)\|$ for every $t\geq 0$.
We let this issue for further investigations.

\item The spectral assumptions \ref{H1}, \ref{H2}, \ref{H3} done at the beginning of the paper typically apply to a parabolic partial differential equation with a selfadjoint operator, like heat, anomalous equations or Stokes equations. As mentioned above, our assumptions do not involve the case of damped wave equations, for which
$$
A=\begin{pmatrix} 0 & \mathrm{id} \\ \triangle + a\,\mathrm{id} & - b\, \mathrm{id} \end{pmatrix}
$$
with $b> 0$ on $\Omega$, is neither selfadjoint nor normal.
Treating such operators is a major open issue.

\item Throughout the paper we have considered bounded control operators $B$. We are thus able to treat the case of internal controls, but not, in general, the case of boundary controls. The case where $B\in L(U,D(A^*)')$ is unbounded but admissible (see \cite{TucsnakWeiss}) is open.

\item We have considered linear control systems, but, since POD can be applied as well to nonlinear systems, we expect that similar results may hold true for semilinear control systems, of the form
$$
\dot y(t) = Ay(t) + F(y(t)) + Bu(t)
$$
where $F:H\rightarrow H$ is a nonlinear mapping of class $C^1$, Lipschitz on the bounded sets of $H$, with $F(0)=0$ (see \cite{AlabauPrivatTrelat} for stabilization results and discretization issues). Adapting our approach to that framework is another interesting open issue.
\end{itemize}


\appendix

\section{Appendix: an asymptotic result in Riccati theory}\label{app:riccati}
Let $\ell$, $m$ and $p$ be integers satisfying $0<\ell< m$. Hereafter, we always denote by $\Vert\cdot\Vert$ the Euclidean norm, may it be in $\R^\ell$, in $\R^{m-\ell}$, in $\R^m$ or in $\R^p$.
We consider the family of autonomous linear control systems in $\R^{m}$, indexed by $\alpha\in(0,\alpha_0]$ for some $\alpha_0>0$,
\begin{equation}\label{app:sysalpha}
\begin{split}
\dot x_1 &= A_1(\alpha) x_1 + A_3(\alpha) x_2 + B_1(\alpha) u  \\
\dot x_2 &= A_4(\alpha) x_1 + A_2(\alpha) x_2 + B_2(\alpha) u
\end{split}
\end{equation}
with matrices $A_1(\alpha)$ of size $\ell \times  \ell$, $A_2(\alpha)$ of size $(m-\ell)\times (m-\ell)$, $A_3(\alpha)$ of size $\ell \times  (m-\ell)$, $A_4(\alpha)$ of size $(m-\ell) \times  \ell$, $B_1(\alpha)$ of size $\ell\times p$ and $B_2(\alpha)$ of size $(m-\ell)\times p$, depending continuously on $\alpha$ and satisfying
\begin{equation}\label{assum}
\begin{split}
& A_1(\alpha)=A_1+\underset{\alpha\rightarrow 0}{\mathrm{o}}(1),
\qquad A_3(\alpha)=\underset{\alpha\rightarrow 0}{\mathrm{o}}(1),\qquad A_4(\alpha)=\underset{\alpha\rightarrow 0}{\mathrm{o}}(1), \\
& B_1(\alpha)=B_1+\underset{\alpha\rightarrow 0}{\mathrm{o}}(1), \\
& \Vert B_2(\alpha)\Vert\ \textrm{uniformly bounded with respect to}\  \alpha\in(0,\alpha_0],
\end{split}
\end{equation}
with $A_1$ of size $\ell\times\ell$ and $B_1$ of size $\ell\times p$.
We assume that:
\begin{itemize}
\item $-A_1(\alpha)$ and $A_2(\alpha)$ are uniformly Hurwitz with respect to $\alpha\in(0,\alpha_0]$, i.e., there exists $\eta>0$ such that, for every $\alpha\in(0,\alpha_0]$, all eigenvalues $\lambda$ of $-A_1(\alpha)$ and of $A_2(\alpha)$ satisfy $\Real(\lambda)\leq -\eta$.

This means that, for $u=0$, the first part of the system \eqref{app:sysalpha} is instable, while the second part is asymptotically stable, uniformly with respect to $\alpha\in(0,\alpha_0]$.

\item The pair $(A_1,B_1)$ satisfies the Kalman condition.
\end{itemize}
Note that this implies that the pair $(A_1(\alpha),B_1(\alpha))$ satisfies the Kalman condition for every $\alpha\in(0,\alpha_0]$, provided $\alpha_0$ is small enough.
We set
$$
A(\alpha) = \begin{pmatrix}
A_1(\alpha) & A_3(\alpha) \\ A_4(\alpha) & A_2(\alpha)
\end{pmatrix}
\qquad\textrm{and}\qquad
B(\alpha) = \begin{pmatrix}
B_1(\alpha) \\ B_2(\alpha)
\end{pmatrix} .
$$
Of course, it follows from \eqref{assum} that, as soon as $\alpha_0$ is small enough, given any $\alpha\in(0,\alpha_0]$, $A(\alpha)$ has no pure imaginary eigenvalue and the pair $(A(\alpha),B(\alpha))$ is stabilizable.
Then, in order to stabilize \eqref{app:sysalpha}, it suffices to stabilize the first part of the system, by choosing $u=K_1x_1$ such that $A_1+B_1K_1$ is Hurwitz (this is possible because the pair $(A_1,B_1)$ satisfies the Kalman condition), and then clearly this control stabilizes as well the whole system \eqref{app:sysalpha} for any $\alpha$ small enough.

But here, following the spirit of the method developed in this paper, we ``forget" that we know how to split the above system in a stable part and an instable part, and hereafter we propose to stabilize the control system \eqref{app:sysalpha} ``blindly", i.e., ignoring the exact splitting, by using the classical Riccati theory, as follows.

\paragraph{Algebraic Riccati equation.}
Let $\varepsilon\geq 0$ and $\alpha\in(0,\alpha_0]$ be arbitrary. Since the pair $(A(\alpha),B(\alpha))$ is stabilizable, by \cite[Corollary 2.3.7 page 55]{Abou-Kandil} there exists a (unique) maximal symmetric positive semidefinite matrix (of size $m\times m$)
$$
P(\varepsilon,\alpha) = \begin{pmatrix}
P_1(\varepsilon,\alpha) & P_3(\varepsilon,\alpha) \\ P_3(\varepsilon,\alpha)^* & P_2(\varepsilon,\alpha)
\end{pmatrix}
$$
solution of the algebraic Riccati equation
\begin{equation}\label{ricca}
A(\alpha)^* P(\varepsilon,\alpha) + P(\varepsilon,\alpha)A(\alpha) - P(\varepsilon,\alpha) B(\alpha) B(\alpha)^* P(\varepsilon,\alpha) + \varepsilon I_{m} = 0  .
\end{equation}
Moreover:
\begin{itemize}
\item $A(\alpha)-B(\alpha)B(\alpha)^*P(\varepsilon,\alpha)$ is semi-stabilizing, i.e., its eigenvalues have nonpositive real part;
\item if $\varepsilon>0$ then $P(\varepsilon,\alpha)$ is positive definite, and $A(\alpha)-B(\alpha)B(\alpha)^*P(\varepsilon,\alpha)$ is Hurwitz, i.e., its eigenvalues have negative real part.\footnote{This is a consequence of \cite[Chapter 2, Corollary 2.4.3 page 60 and Theorem 2.4.25 page 77]{Abou-Kandil}. Indeed, since $A(\alpha)$ has no pure imaginary eigenvalue, the Hamiltonian matrix
$$
\begin{pmatrix} A(\alpha) & -B(\alpha)^* B(\alpha) \\ -\varepsilon I_m & -A(\alpha)^* \end{pmatrix}
$$
has no pure imaginary eigenvalue.}
\end{itemize}
As a consequence, setting
$$
K(\varepsilon,\alpha)= - B(\alpha)^* P(\varepsilon,\alpha) ,\qquad x=\begin{pmatrix} x_1\\Êx_2\end{pmatrix},
$$
the linear feedback
\begin{multline}\label{def_feedback}
\hat u_{\varepsilon,\alpha} = K(\varepsilon,\alpha)x= - B(\alpha)^* P(\varepsilon,\alpha) x \\
= - \left( B_1^* P_1(\varepsilon,\alpha) + B_2^* P_3(\varepsilon,\alpha)^* \right) x_1 - \left( B_1^* P_3(\varepsilon,\alpha) + B_2^* P_2(\varepsilon,\alpha) \right) x_2
\end{multline}
exponentially stabilizes the system \eqref{app:sysalpha} to the origin.

\paragraph{Relationship with linear quadratic optimal control.}
Throughout, for every $\alpha\in(0,\alpha_0]$ we denote by
$$
x(t;\alpha,u,x_1(0),x_2(0)) =
\begin{pmatrix}
x_1(t;\alpha,u,x_1(0),x_2(0))\\
x_2(t;\alpha,u,x_1(0),x_2(0))
\end{pmatrix}
$$
the solution of \eqref{app:sysalpha} with initial condition $(x_1(0),x_2(0))$ and with control $u$.

According to the well known linear quadratic Riccati theory (see, e.g., \cite{Abou-Kandil, kwak, LeeMarkus, Sontag, Trelat}), given any $\varepsilon\geq 0$ and any $\alpha\in(0,\alpha_0]$, given any initial condition $(x_{1,0},x_{2,0})\in\R^\ell\times\R^{m-\ell}$, there exists a unique optimal control minimizing the cost functional
\begin{equation}\label{121960}
J_{\varepsilon,\alpha}(u; x_{1,0},x_{2,0})= \int_0^{+\infty} \left( \varepsilon
\Vert x_1(t;\alpha,u,x_{1,0},x_{2,0})\Vert^2 +  \varepsilon \Vert x_2(t;\alpha,u,x_{1,0},x_{2,0})\Vert^2 +\Vert u(t)\Vert^2 \right) dt
\end{equation}
over all possible controls $u\in L^2(0,+\infty; \R^p)$.
If $\varepsilon=0$ then the optimal control is $u=0$. If $\varepsilon>0$ then the optimal control coincides with the feedback control $\hat u_{\varepsilon,\alpha}$ defined by \eqref{def_feedback}, which exponentially stabilizes the control system \eqref{app:sysalpha}. Moreover, $\hat u_{\varepsilon,\alpha}$ is linear with respect to the initial condition $(x_{1,0},x_{2,0})$, and we have
\begin{multline*}
J_{\varepsilon,\alpha}(\hat u_{\varepsilon,\alpha}; x_{1,0},x_{2,0})= \begin{pmatrix} x_{1,0} \\ x_{2,0}\end{pmatrix}^* P(\varepsilon,\alpha) \begin{pmatrix} x_{1,0} \\ x_{2,0}\end{pmatrix} \\
= x_{1,0}^* P_1(\varepsilon,\alpha) x_{1,0} + 2 x_{1,0}^* P_3(\varepsilon,\alpha) x_{2,0} + x_{2,0}^* P_2(\varepsilon,\alpha) x_{2,0} .
\end{multline*}

\medskip

Our objective is to investigate the asymptotics of $P(\varepsilon,\alpha)$ as $\varepsilon\rightarrow 0$. We are going to prove that $A(\alpha) - B(\alpha)B(\alpha)^* P(\varepsilon,\alpha)$ remains uniformly Hurwitz with respect to $(\varepsilon,\alpha)$ small enough and that the optimal feedback $K(\varepsilon,\alpha) = -B(\alpha)^*P(\varepsilon,\alpha)$ essentially acts on the $\ell$ first modes that are instable (eigenvalues of $A_1(\alpha)$), in the sense that $K(\alpha) = -(B_1(\alpha)^*P_1(0,0),0) + \mathrm{o}(1)$ as $(\varepsilon,\alpha)\rightarrow 0$.

\subsection{A first result in the block diagonal case}\label{app:riccati1}
In this first subsection, we assume that
\begin{equation}\label{A3A4zero}
A_3(\alpha)=0 \qquad\textrm{and}\qquad A_4(\alpha)=0
\end{equation}
for every $\alpha\in(0,\alpha_0]$, i.e., that $A(\alpha)$ is block diagonal.

\begin{proposition}\label{prop_riccati1}
We have the following results:
\begin{itemize}
\item For every $\alpha\in(0,\alpha_0]$, $P(\varepsilon,\alpha)$ is continuous with respect to $\varepsilon\geq 0$. Moreover, we have $P_2(0,\alpha)=0$ and $P_3(0,\alpha)=0$ for every $\alpha\in(0,\alpha_0]$ and
\begin{equation}\label{a.54}
P_1(\varepsilon,\alpha) = P_1(0,\alpha) + \underset{\varepsilon\rightarrow 0}{\mathrm{o}}(1) ,\qquad
P_2(\varepsilon,\alpha) = \underset{\varepsilon\rightarrow 0}{\mathrm{o}}(1),\qquad
P_3(\varepsilon,\alpha) = \underset{\varepsilon\rightarrow 0}{\mathrm{o}}(1)
\end{equation}
where the remainder terms are uniform with respect to $\alpha\in(0,\alpha_0]$. In particular, the mapping $(\varepsilon,\alpha)\mapsto P(\varepsilon,\alpha)$ has a continuous extension at $(0,0)$.
\item For every $\alpha\in[0,\alpha_0]$, the matrix $P_1(0,\alpha)$ is symmetric positive semidefinite (but not necessarily definite). 
\item The matrix $A(\alpha) - B(\alpha)B(\alpha)^* P(\varepsilon,\alpha)$ is uniformly Hurwitz with respect to $(\varepsilon,\alpha)$ small enough, meaning that there exist $\varepsilon_0>0$ and $\eta>0$ such that, for every $(\varepsilon,\alpha)\in[0,\varepsilon_0]\times[0,\alpha_0]$, every eigenvalue $\lambda$ of $A(\alpha) - B(\alpha)B(\alpha)^* P(\varepsilon,\alpha)$ is such that $\Real(\lambda)\leq-\eta$.
\end{itemize}
\end{proposition}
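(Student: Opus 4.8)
The plan is to exploit the block structure forced by \eqref{A3A4zero}. Writing $P(\varepsilon,\alpha)=\begin{pmatrix}P_1&P_3\\ P_3^*&P_2\end{pmatrix}$ and setting $N_1=B_1^*P_1+B_2^*P_3^*$, $N_2=B_1^*P_3+B_2^*P_2$ (dropping the arguments $(\varepsilon,\alpha)$ and writing $A_i$ for $A_i(\alpha)$), the Riccati equation \eqref{ricca} splits, since $PBB^*P=(B^*P)^*(B^*P)$, into
\begin{align*}
A_1^*P_1+P_1A_1-N_1^*N_1+\varepsilon I_\ell &= 0, \\
A_1^*P_3+P_3A_2-N_1^*N_2 &= 0, \\
A_2^*P_2+P_2A_2-N_2^*N_2+\varepsilon I_{m-\ell} &= 0.
\end{align*}
First I would treat $\varepsilon=0$. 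I claim that $P(0,\alpha)=\begin{pmatrix}P_1(0,\alpha)&0\\ 0&0\end{pmatrix}$, where $P_1(0,\alpha)$ is the stabilizing (positive definite) solution of the $\ell$-dimensional equation $A_1^*P_1+P_1A_1-P_1B_1B_1^*P_1=0$ attached to the anti-stable, controllable pair $(A_1(\alpha),B_1(\alpha))$. Indeed, with $P_2=P_3=0$ one has $N_2=0$, so the last two blocks hold automatically and the first reduces to that $\ell$-dimensional equation. This candidate is the \emph{maximal} solution because it is stabilizing: a direct computation gives
$$
A(\alpha)-B(\alpha)B(\alpha)^*P(0,\alpha)=\begin{pmatrix}A_1-B_1B_1^*P_1(0,\alpha)&0\\ -B_2B_1^*P_1(0,\alpha)&A_2\end{pmatrix},
$$
which is block lower triangular with Hurwitz diagonal blocks, hence Hurwitz. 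This identifies $P_2(0,\alpha)=0$ and $P_3(0,\alpha)=0$; and since $P_1(0,\alpha)$ is the compression of the positive semidefinite $P(0,\alpha)$ to the first block, it is positive semidefinite, which gives the second item (extended to $\alpha=0$ by passing to the limit).

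Next I would prove that the closed-loop matrix $M_0(\alpha):=A(\alpha)-B(\alpha)B(\alpha)^*P(0,\alpha)$ is \emph{uniformly} Hurwitz. By \eqref{assum}, $(A_1(\alpha),B_1(\alpha))\to(A_1,B_1)$ as $\alpha\to0$, and the stabilizing solution of a Riccati equation depends continuously on its data (the associated Hamiltonian matrix has a spectral gap across the imaginary axis because $A_1$ is anti-stable); hence $\alpha\mapsto P_1(0,\alpha)$, and therefore $\alpha\mapsto M_0(\alpha)$, extends continuously to the compact interval $[0,\alpha_0]$, with Hurwitz diagonal blocks $A_1(\alpha)-B_1(\alpha)B_1(\alpha)^*P_1(0,\alpha)$ and $A_2(\alpha)$ (the latter uniformly, by assumption). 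By compactness there exist $C,\delta>0$ with $\Vert e^{M_0(\alpha)t}\Vert\leq Ce^{-\delta t}$ for all $t\geq0$ and all $\alpha\in[0,\alpha_0]$.

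The heart of the argument is then a clean linear–quadratic comparison yielding \eqref{a.54} uniformly in $\alpha$. For $\varepsilon>0$ the matrix $P(\varepsilon,\alpha)$ is the value-function matrix, so testing with the suboptimal feedback $\bar u=-B(\alpha)^*P(0,\alpha)\bar x$, $\dot{\bar x}=M_0(\alpha)\bar x$, $\bar x(0)=x_0$, gives
$$
x_0^*P(\varepsilon,\alpha)x_0\leq \int_0^{+\infty}\!\!\big(\varepsilon\Vert\bar x(t)\Vert^2+\Vert\bar u(t)\Vert^2\big)\,dt .
$$
Differentiating $\bar x^*P(0,\alpha)\bar x$ along this flow and using the $\varepsilon=0$ Riccati relation shows $\int_0^{+\infty}\Vert\bar u\Vert^2=x_0^*P(0,\alpha)x_0$, while $\int_0^{+\infty}\Vert\bar x\Vert^2\leq (C^2/2\delta)\Vert x_0\Vert^2$ uniformly in $\alpha$. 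Combined with the monotonicity $P(\varepsilon,\alpha)\geq P(0,\alpha)$ (comparison of stabilizing solutions for the constant terms $\varepsilon I\geq0$), this sandwich gives $0\leq P(\varepsilon,\alpha)-P(0,\alpha)\leq C_1\varepsilon I$ with $C_1$ independent of $\alpha$. This is precisely \eqref{a.54} (in fact with an $\mathrm{O}(\varepsilon)$ rate), together with continuity in $\varepsilon$ and, in view of the continuity in $\alpha$ above, the continuous extension to $(0,0)$.

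Finally, for the third item I would write $A(\alpha)-B(\alpha)B(\alpha)^*P(\varepsilon,\alpha)=M_0(\alpha)+E(\varepsilon,\alpha)$ with $\Vert E(\varepsilon,\alpha)\Vert\leq C_1'\varepsilon\to0$ uniformly in $\alpha$. Since the spectral abscissa is continuous and the set $\{M_0(\alpha):\alpha\in[0,\alpha_0]\}$ is compact with spectral abscissae $\leq-\delta$, a standard compactness argument (any matrix in a small enough neighborhood of this compact set has spectral abscissa below $-\delta/2$) shows that for $\varepsilon_0$ small enough and all $(\varepsilon,\alpha)\in[0,\varepsilon_0]\times[0,\alpha_0]$ every eigenvalue $\lambda$ satisfies $\Real(\lambda)\leq-\delta/2$. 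I expect the main obstacle to be exactly this passage from pointwise to uniform statements: the block-triangular identification of $P(0,\alpha)$ and the compactness of $[0,\alpha_0]$ are what upgrade the elementary Riccati facts (existence of a stabilizing solution, its continuity in the data) to the uniform-in-$\alpha$ bounds that the proposition requires, and one must be careful that the Lyapunov estimate coming directly from \eqref{ricca} degrades as $\varepsilon\to0$, which is why the comparison estimate $0\leq P(\varepsilon,\alpha)-P(0,\alpha)\leq C_1\varepsilon I$ rather than that inequality is used to control the perturbation $E(\varepsilon,\alpha)$.
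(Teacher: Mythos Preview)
Your argument is correct and takes a genuinely different route from the paper. The paper proceeds in the opposite order: it first shows $P_2(\varepsilon,\alpha)\to 0$ by testing the value function with the control $u=0$ on the stable block, then proves $P_3(\varepsilon,\alpha)\to 0$ via a separate technical lemma comparing $J_{\varepsilon,\alpha}(u_{\varepsilon,\alpha};x_{1,0},x_{2,0})$ with $J_{\varepsilon,\alpha}(u_{\varepsilon,\alpha};x_{1,0},0)$, and only afterwards identifies $P_1(0,\alpha)$ through the reduced Riccati equation and a subsequence argument. You instead identify $P(0,\alpha)$ in one stroke by exhibiting the block-diagonal stabilizing (hence maximal) candidate, and then run a single sandwich $0\leq P(\varepsilon,\alpha)-P(0,\alpha)\leq C_1\varepsilon I$ via the suboptimal feedback $-B^*P(0,\alpha)x$. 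This is cleaner and yields the sharper $\mathrm{O}(\varepsilon)$ rate that the paper does not obtain. Your observation that $P_1(0,\alpha)$ is in fact positive \emph{definite} (because $\ker P_1(0,\alpha)$ would be $A_1$-invariant and carry the anti-stable dynamics into the stabilized closed loop) is also correct and stronger than the paper's stated ``semidefinite''.

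One point to tighten: you invoke compactness of $\{M_0(\alpha):\alpha\in[0,\alpha_0]\}$, but the hypotheses do not assert that $A_2(\alpha)$ or $B_2(\alpha)$ converge as $\alpha\to 0$, only that $A_2(\alpha)$ is uniformly Hurwitz and $B_2(\alpha)$ is uniformly bounded. So $M_0(\cdot)$ need not extend continuously to $\alpha=0$. This is harmless for your purposes: the uniform bound $\Vert e^{tM_0(\alpha)}\Vert\leq Ce^{-\delta t}$ follows directly from the block lower-triangular structure (Duhamel on the $(2,1)$ block, using the uniform decay of $e^{tA_2(\alpha)}$ and the uniform boundedness of the off-diagonal entry), and the final Hurwitz conclusion then follows from $\Vert e^{t(M_0(\alpha)+E)}\Vert\leq Ce^{(-\delta+C\Vert E\Vert)t}$ rather than from a spectral-abscissa compactness argument. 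The paper's own Step~3 handles this via the factorized characteristic polynomial, which amounts to the same thing.
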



\begin{proof}
The proof goes in three steps.

\emph{Step 1.}
The fact that for $\alpha$ fixed $P(\varepsilon,\alpha)$ depends continuously on $\varepsilon$ is completely general. We sketch the proof. Let $\alpha\in(0,\alpha_0]$ be fixed. By the minimization property, the symmetric positive semidefinite matrix $P(\varepsilon,\alpha)$ is monotone increasing with respect to $\varepsilon$, hence $P(\varepsilon,\alpha)$ is uniformly bounded with respect to $\varepsilon\in[0,1]$.
Let $\varepsilon\geq 0$ be fixed and let $(\varepsilon_k)_{k\in\N^*}$ be a sequence of nonnegative real numbers converging to $\varepsilon$. Since $P(\varepsilon_k,\alpha)$ is bounded, up to some subsequence we have $P(\varepsilon_k,\alpha)\rightarrow \tilde P(\alpha)$ as $k\rightarrow+\infty$. Since $P(\varepsilon_k,\alpha)$ is the (unique) maximal symmetric positive semidefinite matrix solution of \eqref{ricca} (with $\varepsilon=\varepsilon_k$), we easily infer that $\tilde P(\alpha)$ is the maximal symmetric positive semidefinite matrix solution of \eqref{ricca}. Hence (by uniqueness) $\tilde P(\alpha)=P(\varepsilon,\alpha)$, and since the argument is valid for any sequence, the continuity property follows. In particular, we have $P(\varepsilon,\alpha)=P(0,\alpha)+\underset{\varepsilon\rightarrow 0}{\mathrm{o}}(1)$ as $\varepsilon\rightarrow 0$. Anyway this has been done for $\alpha$ fixed and at this step we do not know yet that $(\varepsilon,\alpha)\mapsto P(\varepsilon,\alpha)$ has a continuous extension at $(0,0)$.

Note that $P_1(\varepsilon,\alpha)$ and $P_2(\varepsilon,\alpha)$ are symmetric positive semidefinite for every $\varepsilon\geq 0$ and every $\alpha\in(0,\alpha_0]$, and positive definite when $\varepsilon>0$ (because $P(\varepsilon,\alpha)$ is so).

\medskip

\emph{Step 2.}
Let us prove that $P_2(\varepsilon,\alpha) = \underset{\varepsilon\rightarrow 0}{\mathrm{o}}(1)$ and that $P_3(\varepsilon,\alpha) = \underset{\varepsilon\rightarrow 0}{\mathrm{o}}(1)$ uniformly with respect to $\alpha\in(0,\alpha_0]$, provided $\alpha_0$ is small enough.

Given any $\varepsilon>0$, any $\alpha\in(0,\alpha_0]$ and any $x_{2,0}\in \R^{m-\ell}$, we have $x_{2,0}^* P_2(\varepsilon,\alpha) x_{2,0}\leq J_{\varepsilon,\alpha}(0; 0, x_{2,0})$ by the minimization property. Besides, using \eqref{A3A4zero} and the fact that $A_2(\alpha)$ is uniformly Hurwitz, we have
$$
J_{\varepsilon,\alpha}(0; 0, x_{2,0}) = \varepsilon \int_0^{+\infty} \Vert e^{tA_2(\alpha)}x_{2,0}\Vert^2\, dt\leq C\varepsilon\Vert x_{2,0}\Vert^2
$$
for some $C>0$ independent of $(\varepsilon,\alpha)$ and of $x_{2,0}$. It follows that $P_2(\varepsilon,\alpha) = \underset{\varepsilon\rightarrow 0}{\mathrm{o}}(1)$ uniformly with respect to $\alpha\in(0,\alpha_0]$.

Now, let $x_{1,0}\in\R^{\ell}$ and $x_{2,0}\in \R^{m-\ell}$ be arbitrary. For every $\varepsilon> 0$ and every $\alpha\in(0,\alpha_0]$, let $u_{\varepsilon,\alpha}=u_{\varepsilon,\alpha}(x_{1,0},0)$ be the solution of the minimization problem $\inf_u J_{\varepsilon,\alpha}(u; x_{1,0},0)$.
Then $J_{\varepsilon,\alpha}(u_{\varepsilon,\alpha}; x_{1,0},0)=x_{1,0}^* P_1(\varepsilon,\alpha) x_{1,0}$ and we have the following lemma.

\begin{lemma}\label{lemtech5}
We have $J_{\varepsilon,\alpha}(u_{\varepsilon,\alpha}; x_{1,0},x_{2,0})-J_{\varepsilon,\alpha}(u_{\varepsilon,\alpha}, x_{1,0},0) = \underset{\varepsilon\rightarrow 0}{\mathrm{o}}(1)\left(\|x_{1,0}\|^2+\|x_{2,0}\|^2\right)$ uniformly with respect to $\alpha\in(0,\alpha_0]$.
\end{lemma}

\begin{proof}
We have
\begin{equation*}
\begin{split}
J_{\varepsilon,\alpha}(u_{\varepsilon,\alpha},x_{1,0},x_{2,0}) &= \int_0^{+\infty}\big( \varepsilon\Vert x_1(t;\alpha,u_{\varepsilon,\alpha},x_{1,0},x_{2,0})\Vert^2 + \varepsilon\Vert x_2(t;\alpha,u_{\varepsilon,\alpha},x_{1,0},x_{2,0})\Vert^2 \\
&\qquad\qquad\qquad\qquad\qquad\qquad\qquad\qquad\qquad\qquad\qquad\qquad\qquad + \Vert u_{\varepsilon,\alpha}(t)\Vert^2 \big) dt \\
J_{\varepsilon,\alpha}(u_{\varepsilon,\alpha},x_{1,0},0) &= \int_0^{+\infty}\left( \varepsilon\Vert x_1(t;\alpha,u_{\varepsilon,\alpha},x_{1,0},0)\Vert^2 + \varepsilon\Vert x_2(t;\alpha,u_{\varepsilon,\alpha},x_{1,0},0)\Vert^2 + \Vert u_{\varepsilon,\alpha}(t)\Vert^2 \right) dt
\end{split}
\end{equation*}
with the first integral being possibly equal to $+\infty$ (this is not a problem and anyway we show below that it is finite).
Now, on the one part, using \eqref{A3A4zero}, we have $x_1(t;\alpha,u_{\varepsilon,\alpha},x_{1,0},x_{2,0}) = x_1(t;\alpha,u_{\varepsilon,\alpha},x_{1,0},0)$. On the other part, we infer from the Duhamel formula that
$$
x_2(t;\alpha,u_{\varepsilon,\alpha},x_{1,0},x_{2,0}) = e^{tA_2(\alpha)}x_{2,0} + x_2(t;\alpha,u_{\varepsilon,\alpha},x_{1,0},0) .
$$
Hence
\begin{equation}\label{16:09}
\begin{split}
J_{\varepsilon,\alpha}& (u_{\varepsilon,\alpha}; x_{1,0},x_{2,0})-J_{\varepsilon,\alpha}(u_{\varepsilon,\alpha}, x_{1,0},0)
\\
&= \varepsilon \int_0^{+\infty} \left( \Vert e^{tA_2(\alpha)}x_{2,0} + x_2(t;\alpha,u_{\varepsilon,\alpha},x_{1,0},0)\Vert^2 - \Vert x_2(t;\alpha,u_{\varepsilon,\alpha},x_{1,0},0)\Vert^2\right) dt \\
&=  \varepsilon \int_0^{+\infty} \left( \Vert e^{tA_2(\alpha)}x_{2,0}\Vert^2 + 2\langle e^{tA_2(\alpha)}x_{2,0},x_2(t;\alpha,u_{\varepsilon,\alpha},x_{1,0},0)\rangle \right) dt
\end{split}
\end{equation}
As already said, as a general fact of the Riccati theory, all eigenvalues of $A(\alpha)-B(\alpha)B(\alpha)^*P(\varepsilon,\alpha)$ have nonpositive real part for every $\varepsilon\geq 0$ and every $\alpha\in(0,\alpha_0]$, and thus the solutions of $\dot x=(A(\alpha)-B(\alpha)B(\alpha)^*P(\varepsilon,\alpha))x$ remain uniformly bounded by the norm of their initial condition: there exists $C>0$ such that $\Vert x_2(t;\alpha,u_{\varepsilon,\alpha},x_{1,0},0)\Vert\leq C\Vert x_{1,0}\Vert$ for every $t\geq 0$, every $\varepsilon\in[0,1]$ and every $\alpha\in(0,\alpha_0]$.
Since $A_2(\alpha)$ is uniformly Hurwitz, using \eqref{16:09} and the Cauchy-Schwarz inequality, the lemma follows.
\end{proof}

By the minimization property, we have $(x_{1,0}^*,x_{2,0}^*) P(\varepsilon,\alpha)\begin{pmatrix}x_{1,0}\\x_{2,0}\end{pmatrix}\leq J_{\varepsilon,\alpha}(u_{\varepsilon,\alpha}; x_{1,0},x_{2,0})$ and therefore, using Lemma \ref{lemtech5},
\begin{equation*}
\begin{split}
2 x_{1,0}^* P_3(\varepsilon,\alpha)x_{2,0}
&= (x_{1,0}^*,x_{2,0}^*) P(\varepsilon,\alpha)\begin{pmatrix}x_{1,0}\\x_{2,0}\end{pmatrix}-x_{1,0}^* P_1(\varepsilon,\alpha) x_{1,0}-x_{2,0}^* P_2(\varepsilon,\alpha) x_{2,0}\\
&\leq J_{\varepsilon,\alpha}(u_{\varepsilon,\alpha}; x_{1,0},x_{2,0})-J_{\varepsilon,\alpha}(u_{\varepsilon,\alpha}, x_{1,0},0)+ \underset{\varepsilon\rightarrow 0}{\mathrm{o}}(1)\|x_{2,0}\|^2\\
&= \underset{\varepsilon\rightarrow 0}{\mathrm{o}}(1)\left(\|x_{1,0}\|^2+\|x_{2,0}\|^2\right)
\end{split}
\end{equation*}
uniformly with respect to $\alpha\in(0,\alpha_0]$.
Since $x_{1,0}$ and $x_{2,0}$ are arbitrary, it follows that $P_3(\varepsilon,\alpha) = \underset{\varepsilon\rightarrow 0}{\mathrm{o}}(1)$ uniformly with respect to $\alpha\in(0,\alpha_0]$.

At this step, we have also obtained that the mappings $(\varepsilon,\alpha)\mapsto P_2(\varepsilon,\alpha)$ and $(\varepsilon,\alpha)\mapsto P_3(\varepsilon,\alpha)$ have a continuous extension at $(0,0)$.

\medskip

\emph{Step 3.}
The Riccati equation \eqref{ricca} gives the equation satisfied by $P_1(\varepsilon,\alpha)$:
\begin{multline}\label{ric1}
A_1(\alpha)^* P_1(\varepsilon,\alpha) + P_1(\varepsilon,\alpha) A_1(\alpha) - P_1(\varepsilon,\alpha) B_1(\alpha) B_1(\alpha)^* P_1(\varepsilon,\alpha) -  P_3(\varepsilon,\alpha) B_2(\alpha) B_1(\alpha)^* P_1(\varepsilon,\alpha)\\
- P_1(\varepsilon,\alpha) B_1(\alpha) B_2(\alpha)^* P_3(\varepsilon,\alpha)^* - P_3(\varepsilon,\alpha) B_2(\alpha) B_2(\alpha)^* P_3(\varepsilon,\alpha)^* = - \varepsilon I_\ell .
\end{multline}
Taking the limit $\varepsilon\rightarrow 0$ in \eqref{ric1}, we obtain
\begin{equation}\label{riccaP1}
A_1(\alpha)^* P_1(0,\alpha) + P_1(0,\alpha) A_1(\alpha) - P_1(0,\alpha) B_1(\alpha) B_1(\alpha)^* P_1(0,\alpha) = 0
\end{equation}
which is a Riccati equation with zero weight on the state. Since the pair $(A_1(\alpha),B_1(\alpha))$ satisfies the Kalman condition and thus is stabilizable, and since the Hamiltonian matrix
$$
\begin{pmatrix} A_1(\alpha) & -B_1(\alpha)^* B_1(\alpha) \\ 0 & -A_1(\alpha)^* \end{pmatrix}
$$
has no pure imaginary eigenvalue (because $-A_1(\alpha)$ is Hurwitz), it follows from \cite[Chapter 2, Corollary 2.4.3 page 60 and Theorem 2.4.25 page 77]{Abou-Kandil} that the Riccati equation \eqref{riccaP1} has a (unique) maximal symmetric positive semidefinite solution, which is moreover stabilizing, hence $P_1(0,\alpha)$ is a symmetric positive semidefinite matrix such that $A_1(\alpha)-B_1(\alpha)B_1(\alpha)^* P_1(0,\alpha)$ is Hurwitz, uniformly with respect to $\alpha\in(0,\alpha_0]$.

Moreover, for $\alpha=0$, the above argument shows that \eqref{riccaP1} has a (unique) maximal symmetric positive semidefinite solution $\tilde P$. We claim that $P_1(\varepsilon,\alpha) = \tilde P + \mathrm{o}(1)$ as $(\varepsilon,\alpha)\rightarrow (0,0)$. By \eqref{ric1}, using that $P_2(\varepsilon,\alpha)=\mathrm{o}(1)$ and $P_3(\varepsilon,\alpha)=\mathrm{o}(1)$ as $(\varepsilon,\alpha)\rightarrow (0,0)$, we have
$$
A_1(\alpha)^* P_1(\varepsilon,\alpha) + P_1(\varepsilon,\alpha) A_1(\alpha) - P_1(\varepsilon,\alpha) B_1(\alpha) B_1(\alpha)^* P_1(\varepsilon,\alpha)  = \mathrm{o}(1)
$$
as $(\varepsilon,\alpha)\rightarrow (0,0)$.
Given any sequence $(\varepsilon_k,\alpha_k)$ converging to $(0,0)$, up to some subsequence we must have $P(\varepsilon_k,\alpha_k)\rightarrow \tilde P$ as $k\rightarrow +\infty$ (by uniqueness and maximality in the Riccati theory). Since this argument is valid for any subsequence, it follows that the mapping $(\varepsilon,\alpha)\mapsto P_1(\varepsilon,\alpha)$ has a continuous extension at $(0,0)$.

Finally, let us prove that the closed-loop matrix $A(\alpha) - B(\alpha)B(\alpha)^* P(\varepsilon,\alpha)$, which is written (omitting the dependence in $\varepsilon$ and $\alpha$ to keep a better readability) as
$$
\left(\begin{array}{c|c}
A_1 - B_1 B_1^* P_1 -B_1 B_2^* P_3 & -B_1 B_1^* P_3 - B_1 B_2^* P_2 \\ [1mm]\hline\\[-3mm]
- B_2 B_1^* P_1 -B_2 B_2^* P_3 & A_2 -B_2 B_1^* P_3 - B_2 B_2^* P_2
\end{array}\right)
$$
is uniformly Hurwitz with respect to $(\varepsilon,\alpha)$ small enough. By the Riccati theory, we already know that, for every fixed $\varepsilon>0$, $A(\alpha) - B(\alpha)B(\alpha)^* P(\varepsilon,\alpha)$ is Hurwitz, uniformly with respect to $\alpha\in[0,\alpha_0]$. Now, thanks to \eqref{a.54},
$$
A(\alpha) - B(\alpha)B(\alpha)^* P(\varepsilon,\alpha) = \left(\begin{array}{c|c}
A_1(\alpha) - B_1(\alpha) B_1(\alpha)^* P_1(0,\alpha) + \underset{\varepsilon\rightarrow 0}{\mathrm{o}}(1) & \underset{\varepsilon\rightarrow 0}{\mathrm{o}}(1) \\ [1mm]\hline\\[-3mm]
- B_2(\alpha) B_1(\alpha)^* P_1(0,\alpha) + \underset{\varepsilon\rightarrow 0}{\mathrm{o}}(1) & A_2(\alpha) +\underset{\varepsilon\rightarrow 0}{\mathrm{o}}(1)
\end{array}\right)
$$
where the remainder terms are uniform with respect to $\alpha\in[0,\alpha_0]$.
Using the fact that the determinant is differentiable, and that $d\det(X).H=\mathrm{tr}(\mathrm{com}(X)^* H)$, we get that
\begin{multline*}
\det \left( A(\alpha) - B(\alpha)B(\alpha)^* P(\varepsilon,\alpha) - \lambda I_{m} \right) \\
= \det \left( A_1(\alpha) - B_1(\alpha) B_1(\alpha)^* P_1(0,\alpha) - \lambda I_\ell \right) \, \det \left( A_2(\alpha)-\lambda I_{m-\ell} \right) + \underset{\varepsilon\rightarrow 0}{\mathrm{o}}(1)
\end{multline*}
from which it follows that $A(\alpha) - B(\alpha)B(\alpha)^* P(\varepsilon,\alpha)$ is uniformly Hurwitz with respect to $(\varepsilon,\alpha)\in[0,\varepsilon_0]\times[0,\alpha_0]$ for some $\varepsilon_0>0$ small enough. The proposition is proved.
\end{proof}

\subsection{Asymptotic result in the general case}\label{app:riccati2}
We now drop the assumption \eqref{A3A4zero} and we provide a generalization of Proposition \ref{prop_riccati1}.
Since $A_3(\alpha)=\underset{\alpha\rightarrow 0}{\mathrm{o}}(1)$ and $A_4(\alpha)=\underset{\alpha\rightarrow 0}{\mathrm{o}}(1)$, the control system \eqref{app:sysalpha} can be viewed as a perturbation of the system \eqref{app:sysalpha} under the assumption \eqref{A3A4zero}, which has been studied in Appendix \ref{app:riccati1}.

\begin{proposition}\label{prop_riccati2}
We have the following results:
\begin{itemize}
\item For every $\alpha\in(0,\alpha_0]$, $P(\varepsilon,\alpha)$ is continuous with respect to $\varepsilon\geq 0$. Moreover, the mapping $(\varepsilon,\alpha)\mapsto P(\varepsilon,\alpha)$ has a continuous extension at $(0,0)$, with $P_2(0,0)=0$ and $P_3(0,0)=0$:
\begin{equation}\label{a.542}
P_1(\varepsilon,\alpha) = P_1(0,0) + \mathrm{o}(1) ,\qquad
P_2(\varepsilon,\alpha) = \mathrm{o}(1),\qquad
P_3(\varepsilon,\alpha) = \mathrm{o}(1)
\end{equation}
as $(\varepsilon,\alpha)\rightarrow (0,0)$.
\item For every $\alpha\in[0,\alpha_0]$, the matrix $P_1(0,\alpha)$ is symmetric positive semidefinite (but not necessarily definite). 
\item The matrix $A(\alpha) - B(\alpha)B(\alpha)^* P(\varepsilon,\alpha)$ is uniformly Hurwitz with respect to $(\varepsilon,\alpha)$ small enough, meaning that there exist $\varepsilon_0>0$, $\alpha_0'\in(0,\alpha_0]$ and $\eta>0$ such that, for every $(\varepsilon,\alpha)\in[0,\varepsilon_0]\times[0,\alpha_0']$, every eigenvalue $\lambda$ of $A(\alpha) - B(\alpha)B(\alpha)^* P(\varepsilon,\alpha)$ is such that $\Real(\lambda)\leq-\eta$.
\end{itemize}
\end{proposition}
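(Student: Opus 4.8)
The plan is to treat the general system \eqref{app:sysalpha} as an $\mathrm{o}(1)$ perturbation (in the coupling blocks $A_3(\alpha),A_4(\alpha)$) of the block-diagonal system already handled in Proposition \ref{prop_riccati1}, and to re-establish the three conclusions while carefully tracking that coupling. The continuity of $\varepsilon\mapsto P(\varepsilon,\alpha)$ at fixed $\alpha$ is purely general and is obtained exactly as in Step~1 of the proof of Proposition \ref{prop_riccati1} (monotonicity in $\varepsilon$, boundedness, and uniqueness of the maximal solution). First I would record a uniform a priori bound on $P(\varepsilon,\alpha)$: since $(A_1,B_1)$ satisfies the Kalman condition, fix $K$ with $A_1+B_1K$ Hurwitz; for $\alpha_0$ small the matrix $A(\alpha)+B(\alpha)(K,0)$ is uniformly Hurwitz by a Lyapunov/perturbation argument (the top-right block $A_3(\alpha)=\mathrm{o}(1)$ is absorbed by the uniformly Hurwitz diagonal blocks, while the bottom-left block stays $\mathrm{O}(1)$), so the suboptimal feedback $u=Kx_1$ together with the minimization characterization of $P$ gives $P(\varepsilon,\alpha)=\mathrm{O}(1)$ uniformly for $(\varepsilon,\alpha)$ small.

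Next I would prove $P_2(\varepsilon,\alpha)=\mathrm{o}(1)$ and $P_3(\varepsilon,\alpha)=\mathrm{o}(1)$. For $P_2$, I bound $x_{2,0}^*P_2(\varepsilon,\alpha)x_{2,0}=\inf_u J_{\varepsilon,\alpha}(u;0,x_{2,0})$ by the cost of $u=Kx_1$ started from $(0,x_{2,0})$: the state cost is $\mathrm{O}(\varepsilon)$, while the control cost is $\mathrm{o}(1)$ because $x_1(0)=0$ and $x_1$ is excited only through $A_3(\alpha)x_2=\mathrm{o}(1)$, whence $\int_0^{+\infty}\|x_1\|^2\,dt=\mathrm{o}(1)\|x_{2,0}\|^2$. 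For $P_3$, rather than repeating the Duhamel computation of Lemma \ref{lemtech5} (which exploited the exact decoupling \eqref{A3A4zero}), I would use the identity $2x_{1,0}^*P_3(\varepsilon,\alpha)x_{2,0}=J_{\varepsilon,\alpha}(\hat u;x_{1,0},x_{2,0})-x_{1,0}^*P_1 x_{1,0}-x_{2,0}^*P_2 x_{2,0}$, bound the optimal cost from above by the cost of the superposition $\hat u_1+\hat u_2$ of the two separately optimal controls (for data $(x_{1,0},0)$ and $(0,x_{2,0})$), and note that the excess over $J_1+J_2$, with $J_1=x_{1,0}^*P_1 x_{1,0}$ and $J_2=x_{2,0}^*P_2 x_{2,0}$, consists only of cross terms bounded via Cauchy--Schwarz by $\sqrt{J_1 J_2}=\mathrm{o}(1)$ (since $J_2=\mathrm{o}(1)$ by the previous step and $J_1=\mathrm{O}(1)$). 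Replacing $x_{2,0}$ by $-x_{2,0}$ yields the matching lower bound, hence $P_3=\mathrm{o}(1)$.

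With $P_2,P_3=\mathrm{o}(1)$ in hand, I would pass to the limit in the $(1,1)$-block of \eqref{ricca}: since $A_3,A_4\to 0$, $B_1(\alpha)\to B_1$, $B_2(\alpha)$ stays bounded and $P_2,P_3\to 0$, every subsequential limit $\Pi_1$ of $P_1(\varepsilon,\alpha)$ solves $A_1^*\Pi_1+\Pi_1 A_1-\Pi_1 B_1 B_1^*\Pi_1=0$, which is \eqref{riccaP1} at $\alpha=0$; crucially this limit is clean and does \emph{not} require $A_2(\alpha)$ or $B_2(\alpha)$ to converge. The delicate point is to identify $\Pi_1$ as the \emph{maximal} (stabilizing) solution rather than another positive semidefinite one (such as $0$). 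For this I would first let $\varepsilon\to 0$ at fixed $\alpha$, so that $A(\alpha)-B(\alpha)B(\alpha)^*P(0,\alpha)$ is Hurwitz (the full Hamiltonian has no imaginary eigenvalue because $A(\alpha)$ has none), and then let $\alpha\to 0$, using that the limiting closed loop is block lower triangular up to an $\mathrm{o}(1)$ top-right block, whose $(1,1)$-block converges to $A_1-B_1B_1^*\Pi_1$ with spectrum separated from that of the uniformly Hurwitz block $A_2(\alpha)$; a perturbation argument forces $A_1-B_1B_1^*\Pi_1$ to be semi-stabilizing, and since its eigenvalues are Hamiltonian eigenvalues (none imaginary) it is in fact Hurwitz. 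Thus $\Pi_1=P_1(0,0)$ is unique, every subsequence shares this limit, and \eqref{a.542} follows, with $P_1(0,\alpha)$ symmetric positive semidefinite as in Proposition \ref{prop_riccati1}.

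Finally, for the uniform Hurwitz conclusion I would insert \eqref{a.542} into $A(\alpha)-B(\alpha)B(\alpha)^*P(\varepsilon,\alpha)$, which becomes block lower triangular up to an $\mathrm{o}(1)$ top-right block, with top-left block $A_1-B_1B_1^*P_1(0,0)+\mathrm{o}(1)$ (uniformly Hurwitz) and bottom-right block $A_2(\alpha)+\mathrm{o}(1)$ (uniformly Hurwitz), while the bottom-left block $A_4-B_2B_1^*P_1(0,0)+\mathrm{o}(1)$ remains merely $\mathrm{O}(1)$. A uniform Lyapunov function $V(x_1,x_2)=x_1^*Q_1 x_1+\delta\,x_2^*Q_2 x_2$ built from Lyapunov matrices of the two diagonal blocks, with $\delta$ small, absorbs both the $\mathrm{o}(1)$ top-right block and the $\mathrm{O}(1)$ bottom-left block via Young's inequality. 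I expect the main obstacle to be exactly this uniform-in-$(\varepsilon,\alpha)$ control of the coupling: unlike in Proposition \ref{prop_riccati1}, the off-diagonal blocks no longer vanish, the bottom-left block of the closed loop stays of size $\mathrm{O}(1)$, and the stable block $A_2(\alpha)$ may be unbounded in norm, so the determinant factorization of Step~3 of Proposition \ref{prop_riccati1} must be replaced by a Lyapunov estimate whose constants are uniform in $\alpha$ — which is precisely where the uniformly Hurwitz character of $A_2(\alpha)$ (a uniform decay of its semigroup, not a mere eigenvalue condition) is essential.
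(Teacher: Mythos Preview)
Your approach is correct and genuinely different from the paper's. The paper does \emph{not} work directly with a pole-placement feedback $K$; instead it imports the Riccati solution $Q(\varepsilon,\alpha)$ of the block-diagonal system from Proposition~\ref{prop_riccati1}, proves (Lemma~\ref{lemexp18:06}) that the feedback $-B^*Q$ also stabilizes the full system with $e^{t(A-BB^*Q)}=e^{t(A^{\textrm{diag}}-BB^*Q)}+\mathrm{o}_\alpha(1)$, and then compares costs to get $P\leq R$ with $R=Q+\mathrm{o}(1)$, whence $P_2,P_3=\mathrm{o}(1)$. Your route---a fixed $K$ for the uniform a priori bound and for $P_2$, then the superposition/Cauchy--Schwarz argument $2x_{1,0}^*P_3x_{2,0}\leq J(\hat u_1+\hat u_2)-J_1-J_2\leq C\sqrt{J_1J_2}=\mathrm{o}(1)$ for $P_3$---is more self-contained (it does not need Proposition~\ref{prop_riccati1} as a black box) and the $P_3$ step is arguably cleaner than the paper's Duhamel comparison. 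For the uniform Hurwitz conclusion you replace the paper's determinant factorization by a weighted Lyapunov function $x_1^*Q_1x_1+\delta\,x_2^*Q_2(\alpha)x_2$; this is the right move since, as you note, $A_2(\alpha)$ need not be bounded in norm, and a uniform $Q_2(\alpha)$ exists precisely under the uniform semigroup decay you flag at the end. One point to tighten: your maximality argument (``first $\varepsilon\to 0$ at fixed $\alpha$, then $\alpha\to 0$'') as written treats the iterated limit, whereas \eqref{a.542} is a joint limit; the simplest fix is to combine the upper bound $\Pi_1\leq P_1^{\max}$ (any PSD solution of \eqref{riccaP1} at $\alpha=0$ lies below the maximal one) with the lower bound coming from monotonicity $P_1(\varepsilon,\alpha)\geq P_1(0,\alpha)$ and your iterated argument applied to $P_1(0,\alpha)$---or, equivalently, run your Schur-complement perturbation directly along an arbitrary sequence $(\varepsilon_k,\alpha_k)\to(0,0)$ with $\varepsilon_k>0$, where the full closed loop is already Hurwitz. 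With that adjustment, your proof goes through.
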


Note that, as a consequence of \eqref{a.542}, the feedback matrix $K(\varepsilon,\alpha)=-B(\alpha)^* P(\varepsilon,\alpha)$ satisfies
\begin{equation}\label{a.55}
\begin{split}
K(\varepsilon,\alpha) &= -(B_1(\alpha)^*P_1(\varepsilon,\alpha)+B_2(\alpha)^*P_3(\varepsilon,\alpha), B_1(\alpha)^*P_3(\varepsilon,\alpha)+B_2(\alpha)^*P_2(\varepsilon,\alpha)) \\
&= -(B_1(\alpha)^*P_1(0,0),0) + \mathrm{o}(1)
\end{split}
\end{equation}
as $(\varepsilon,\alpha)\rightarrow 0$. In other words, for $(\varepsilon,\alpha)$ small the feedback matrix $K(\varepsilon,\alpha)$ essentially acts on the $\ell$ first modes that are instable (eigenvalues of $A_1(\alpha)$).

\begin{proof}
First of all, proceeding as in the first step of the proof of Proposition \ref{prop_riccati1} is the same, we obtain the continuity of $P(\varepsilon,\alpha)$ with respect to $\varepsilon$. Anyway at this step we do not know yet that $(\varepsilon,\alpha)\mapsto P(\varepsilon,\alpha)$ has a continuous extension at $(0,0)$.

The proof is now different from the second and third steps of the proof of Proposition \ref{prop_riccati1} in which we used the specific (block diagonal) form of the system under the assumption \eqref{A3A4zero}, but we are going to use the result of Proposition \ref{prop_riccati1}.

For the block diagonal control system studied in Section \ref{app:riccati1}, which is written as
\begin{equation}\label{app:sysalphadiag}
\dot y = A^{\textrm{diag}}(\alpha) y + B(\alpha) v
\end{equation}
with
$$
A^{\textrm{diag}}(\alpha) = \begin{pmatrix} A_1(\alpha) & 0 \\ 0 & A_2(\alpha) \end{pmatrix}
$$
we denote in what follows by
$$
Q(\varepsilon,\alpha) = \begin{pmatrix}
Q_1(\varepsilon,\alpha) & Q_3(\varepsilon,\alpha) \\ Q_3(\varepsilon,\alpha)^* & Q_2(\varepsilon,\alpha)
\end{pmatrix}
$$
(instead of $P$) the Riccati matrix solution of \eqref{ricca}. It satisfies the conclusions of Proposition \ref{prop_riccati1}.

We are now going to compare $P(\varepsilon,\alpha)$ (attached to the complete system \eqref{app:sysalpha}) with $Q(\varepsilon,\alpha)$ (attached to the block diagonal system \eqref{app:sysalphadiag}) .

\begin{lemma}\label{lemexp18:06}
There exist $\varepsilon_0>0$ and $\alpha_0'\in(0,\alpha_0]$ such that:
\begin{itemize}
\item $A^{\textrm{diag}}(\alpha)-B(\alpha)B(\alpha)^*Q(\varepsilon,\alpha)$ and $A(\alpha)-B(\alpha)B(\alpha)^*Q(\varepsilon,\alpha)$ are uniformly Hurwitz with respect to $(\varepsilon,\alpha)\in[0,\varepsilon_0]\times[0,\alpha_0']$.
\item We have
\begin{equation}\label{diffexp}
e^{ t (A(\alpha)-B(\alpha)B(\alpha)^*Q(\varepsilon,\alpha)) } = e^{ t (A^{\textrm{diag}}(\alpha)-B(\alpha)B(\alpha)^*Q(\varepsilon,\alpha)) } + \underset{\alpha\rightarrow 0}{\mathrm{o}}(1)
\end{equation}
uniformly with respect to $t\geq 0$ and to $\varepsilon\in[0,\varepsilon_0]$.
\end{itemize}
\end{lemma}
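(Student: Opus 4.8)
The plan is to view $A(\alpha)-B(\alpha)B(\alpha)^*Q(\varepsilon,\alpha)$ as a perturbation of the block-diagonal closed-loop matrix $A^{\textrm{diag}}(\alpha)-B(\alpha)B(\alpha)^*Q(\varepsilon,\alpha)$, using crucially that $Q(\varepsilon,\alpha)$ is the Riccati matrix of the \emph{block-diagonal} system \eqref{app:sysalphadiag} and therefore satisfies all the conclusions of Proposition \ref{prop_riccati1}. Setting
$$
E(\alpha) = A(\alpha)-A^{\textrm{diag}}(\alpha) = \begin{pmatrix} 0 & A_3(\alpha) \\ A_4(\alpha) & 0 \end{pmatrix},
$$
we have $\Vert E(\alpha)\Vert = \underset{\alpha\rightarrow 0}{\mathrm{o}}(1)$ by \eqref{assum}, and
$$
A(\alpha)-B(\alpha)B(\alpha)^*Q(\varepsilon,\alpha) = \big(A^{\textrm{diag}}(\alpha)-B(\alpha)B(\alpha)^*Q(\varepsilon,\alpha)\big) + E(\alpha).
$$

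I would first establish the two uniform Hurwitz statements. The uniform Hurwitzness of $A^{\textrm{diag}}(\alpha)-B(\alpha)B(\alpha)^*Q(\varepsilon,\alpha)$ on some box $[0,\varepsilon_0]\times[0,\alpha_0]$, with a rate $\eta>0$, is precisely the third item of Proposition \ref{prop_riccati1}. Since the eigenvalues of a matrix depend continuously on its entries and $\Vert E(\alpha)\Vert\rightarrow 0$, a standard perturbation argument (shrinking $\alpha_0$ to some $\alpha_0'\in(0,\alpha_0]$) guarantees that every eigenvalue of the perturbed matrix $A(\alpha)-B(\alpha)B(\alpha)^*Q(\varepsilon,\alpha)$ has real part $\leq -\eta/2$ for $(\varepsilon,\alpha)\in[0,\varepsilon_0]\times[0,\alpha_0']$, which yields its uniform Hurwitzness.

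Next I would convert uniform Hurwitzness into uniform exponential bounds. Because $Q(\varepsilon,\alpha)$ extends continuously to $(0,0)$ by Proposition \ref{prop_riccati1}, the two closed-loop matrices depend continuously on $(\varepsilon,\alpha)$ over the \emph{closed} box $[0,\varepsilon_0]\times[0,\alpha_0']$; together with the uniform spectral abscissa bound this provides constants $C\geq 1$ and $\eta'>0$ (say $\eta'=\eta/2$) with $\Vert e^{tM^{\textrm{diag}}}\Vert\leq Ce^{-\eta' t}$ and $\Vert e^{tM}\Vert\leq Ce^{-\eta' t}$ for all $t\geq 0$, where $M^{\textrm{diag}}$ and $M$ denote the block-diagonal and full closed-loop matrices. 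The comparison \eqref{diffexp} then follows from the Duhamel formula
$$
e^{tM}-e^{tM^{\textrm{diag}}} = \int_0^t e^{(t-s)M^{\textrm{diag}}}\,E(\alpha)\,e^{sM}\,ds,
$$
which gives
$$
\Vert e^{tM}-e^{tM^{\textrm{diag}}}\Vert \leq C^2\Vert E(\alpha)\Vert\int_0^t e^{-\eta'(t-s)}e^{-\eta' s}\,ds = C^2\Vert E(\alpha)\Vert\,t\,e^{-\eta' t}\leq \frac{C^2}{e\eta'}\Vert E(\alpha)\Vert,
$$
using $\sup_{t\geq 0}te^{-\eta' t}=1/(e\eta')$. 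The right-hand side is independent of $t$ and $\varepsilon$ and is $\underset{\alpha\rightarrow 0}{\mathrm{o}}(1)$, which is exactly \eqref{diffexp}.

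The delicate point is the uniform exponential estimate $\Vert e^{tM}\Vert\leq Ce^{-\eta' t}$ with $C$ independent of $(\varepsilon,\alpha)$: a uniform bound on the spectral abscissa alone does not control the transient growth of the matrix exponential. The cleanest way to secure it is to use the continuity of the closed-loop matrices on the compact box together with the standard fact that a compact family of matrices with spectral abscissa $\leq-\eta'$ admits a uniform exponential estimate, for instance through uniformly bounded and uniformly coercive solutions $X$ of the Lyapunov equations $M^*X+XM=-I$.
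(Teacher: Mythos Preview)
Your proof is correct and follows essentially the same route as the paper: invoke Proposition~\ref{prop_riccati1} for the uniform Hurwitz property of the block-diagonal closed loop, perturb by $E(\alpha)=A(\alpha)-A^{\textrm{diag}}(\alpha)=\mathrm{o}(1)$ to obtain the same for the full closed loop, and then use the Duhamel (variation-of-constants) identity to compare the two matrix exponentials. The paper's argument is terser and does not spell out, as you do, the passage from a uniform spectral-abscissa bound to a uniform exponential estimate $\Vert e^{tM}\Vert\leq Ce^{-\eta' t}$ with $C$ independent of $(\varepsilon,\alpha)$; your Lyapunov/compactness remark fills that small gap cleanly.
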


\begin{proof}
The Hurwitz property of $A^{\textrm{diag}}(\alpha)-B(\alpha)B(\alpha)^*Q(\varepsilon,\alpha)$ has been obtained in Proposition \ref{prop_riccati1}.
Since $A_3(\alpha)=\underset{\alpha\rightarrow 0}{\mathrm{o}}(1)$ and $A_4(\alpha)=\underset{\alpha\rightarrow 0}{\mathrm{o}}(1)$, we have $A(\alpha) = A^{\textrm{diag}}(\alpha) + \underset{\alpha\rightarrow 0}{\mathrm{o}}(1)$ and the Hurwitz property of $A(\alpha)-B(\alpha)B(\alpha)^*Q(\varepsilon,\alpha)$ follows for every $\alpha$ small enough.

For the second point, we note that, by the Duhamel formula,
$$
e^{ t (A-BB^*Q) } = e^{ t (A^{\textrm{diag}}-BB^*Q) } + \int_0^t e^{ (t-s) ( A^{\textrm{diag}}-BB^*Q ) } (A - A^{\textrm{diag}}) e^{ s (A-BB^*Q) }\, ds
$$
where we have dropped the dependence in $\varepsilon$ and $\alpha$ for readability. Since the exponentials in the integral involve Hurwitz matrices, and since the middle term in the integral is a $\underset{\alpha\rightarrow 0}{\mathrm{o}}(1)$, the estimate \eqref{diffexp} follows.
\end{proof}

By Lemma \ref{lemexp18:06}, the matrix $A^{\textrm{diag}}(\alpha)-B(\alpha)B(\alpha)^*Q(\varepsilon,\alpha)$ (resp., $A(\alpha)-B(\alpha)B(\alpha)^*Q(\varepsilon,\alpha)$) is uniformly Hurwitz with respect to $(\varepsilon,\alpha)\in[0,\varepsilon_0]\times[0,\alpha_0']$. Hence the feedback control
$$
v_{\varepsilon,\alpha} = -B(\alpha)^* Q(\varepsilon,\alpha) y \qquad \textrm{(resp., $u_{\varepsilon,\alpha} = -B(\alpha)^* Q(\varepsilon,\alpha) x$)}
$$
stabilizes the block diagonal system \eqref{app:sysalphadiag} (resp., the system \eqref{app:sysalpha}) and we infer that there exist $C_1>0$ and $C_2>0$ such that
\begin{equation*}
\Vert y(t;\alpha,v_{\varepsilon,\alpha},x_0)\Vert^2 + \Vert v_{\varepsilon,\alpha}(t)\Vert^2 + \Vert x(t;\alpha,u_{\varepsilon,\alpha},x_0)\Vert^2 + \Vert u_{\varepsilon,\alpha}(t)\Vert^2 \leq C_1e^{-C_2 t}\Vert x_0\Vert^2
\end{equation*}
for every $x_0\in\R^m$, uniformly with respect to $t\geq 0$ and to $(\varepsilon,\alpha)\in[0,\varepsilon_0]\times[0,\alpha_0']$.
For every $\delta>0$, let $T_\delta>0$ be such that $\int_{T_\delta}^{+\infty} C_1e^{-C_2 t}\, dt\leq \delta$. Then
\begin{equation}\label{19:34}
\int_{T_\delta}^{+\infty} \left( \Vert y(t;\alpha,v_{\varepsilon,\alpha},x_0)\Vert^2 + \Vert v_{\varepsilon,\alpha}(t)\Vert^2 + \Vert x(t;\alpha,u_{\varepsilon,\alpha},x_0)\Vert^2 + \Vert u_{\varepsilon,\alpha}(t)\Vert^2 \right) dt \leq \delta \Vert x_0\Vert^2
\end{equation}
for every $x_0\in\R^m$, for every $t\geq 0$ and for all $(\varepsilon,\alpha)\in[0,\varepsilon_0]\times[0,\alpha_0']$.

Besides, since by definition
\begin{equation*}
\begin{split}
y(t;\alpha,v_{\varepsilon,\alpha},x_0) &= e^{ t (A^{\textrm{diag}}(\alpha)-B(\alpha)B(\alpha)^*Q(\varepsilon,\alpha)) } x_0 \\
x(t;\alpha,u_{\varepsilon,\alpha},x_0) &= e^{ t (A(\alpha)-B(\alpha)B(\alpha)^*Q(\varepsilon,\alpha)) } x_0
\end{split}
\end{equation*}
we infer from Lemma \ref{lemexp18:06} that
$$
\Vert y(t;\alpha,v_{\varepsilon,\alpha},x_0)\Vert^2 + \Vert v_{\varepsilon,\alpha}(t)\Vert^2 = \Vert x(t;\alpha,u_{\varepsilon,\alpha},x_0)\Vert^2 +  \Vert u_{\varepsilon,\alpha}(t)\Vert^2 + \underset{\alpha\rightarrow 0}{\mathrm{o}}(1) \Vert x_0\Vert^2
$$
uniformly with respect to $t\geq 0$ and to $\varepsilon\in[0,\varepsilon_0]$. Hence, combining with \eqref{19:34}, we get that
\begin{equation}\label{19:49}
\begin{split}
J^\textrm{diag}_{\varepsilon,\alpha}(v_{\varepsilon,\alpha},x_0) &= \int_0^{+\infty} \left( \varepsilon\Vert y(t;\alpha,v_{\varepsilon,\alpha},x_0)\Vert^2 + \Vert v_{\varepsilon,\alpha}(t)\Vert^2 \right) dt \\
&= \int_0^{+\infty} \left(  \varepsilon\Vert x(t;\alpha,u_{\varepsilon,\alpha},x_0)\Vert^2 + \Vert u_{\varepsilon,\alpha}(t)\Vert^2 \right) dt + \underset{\alpha\rightarrow 0}{\mathrm{o}}(1) \Vert x_0\Vert^2 \\
&= J_{\varepsilon,\alpha}(u_{\varepsilon,\alpha},x_0) + \underset{\alpha\rightarrow 0}{\mathrm{o}}(1) \Vert x_0\Vert^2
\end{split}
\end{equation}
uniformly with respect to $\varepsilon\in[0,\varepsilon_0]$.
Here, with evident notations, $J^\textrm{diag}_{\varepsilon,\alpha}$ stands for the cost functional \eqref{121960} attached to the block diagonal control system \eqref{app:sysalphadiag}.
Since $Q(\varepsilon,\alpha)$ is the solution of the Riccati equation, $J^\textrm{diag}_{\varepsilon,\alpha}(v_{\varepsilon,\alpha},x_0)$ is the optimal cost for the diagonal block system \eqref{app:sysalphadiag} and we have
$$
J^\textrm{diag}_{\varepsilon,\alpha}(v_{\varepsilon,\alpha},x_0) = x_0^*Q(\varepsilon,\alpha)x_0 .
$$
Besides, $J_{\varepsilon,\alpha}(u_{\varepsilon,\alpha},x_0)$ may not be the optimal cost for the system \eqref{app:sysalpha} because $u_{\varepsilon,\alpha}$ is defined with the matrix $Q(\varepsilon,\alpha)$ which may differ from $P(\varepsilon,\alpha)$. Anyway, since $x(t;\alpha,u_{\varepsilon,\alpha},x_0)$ and $u_{\varepsilon,\alpha}$ are linear in $x_0$, there exists a symmetric positive semidefinite matrix
$$
R(\varepsilon,\alpha) = \begin{pmatrix} R_1(\varepsilon,\alpha) & R_3(\varepsilon,\alpha) \\ R_3(\varepsilon,\alpha)^* & R_4(\varepsilon,\alpha) \end{pmatrix}
$$
such that
$$
J_{\varepsilon,\alpha}(u_{\varepsilon,\alpha},x_0)= x_0^* R(\varepsilon,\alpha) x_0 .
$$
We infer from \eqref{19:49} that
$$
x_0^*Q(\varepsilon,\alpha)x_0 = x_0^* R(\varepsilon,\alpha) x_0 + \underset{\alpha\rightarrow 0}{\mathrm{o}}(1) \Vert x_0\Vert^2
$$
for every $x_0\in\R^m$, uniformly with respect to $\varepsilon\in[0,\varepsilon_0]$. Applying Proposition \ref{prop_riccati1}, we get that
$$
R_1(\varepsilon,\alpha)=Q_1(0,0) + \mathrm{o}(1),\qquad
R_2(\varepsilon,\alpha)=\mathrm{o}(1),\qquad R_3(\varepsilon,\alpha)=\mathrm{o}(1)
$$
as $(\varepsilon,\alpha)\rightarrow(0,0)$, and by the arguments of Step 3 in the proof of Proposition \ref{prop_riccati1}, $R_1(0,0)=Q_1(0,0)$ is the (unique) maximal symmetric positive semidefinite matrix solution of the Riccati equation
\begin{equation}\label{riccaX}
A_1(0)^* X + X A_1(0) - X B_1(0) B_1(0)^* X = 0 .
\end{equation}

By the minimization property of the Riccati matrix for the control system \eqref{app:sysalpha}, we have $
x_0^*P(\varepsilon,\alpha)x_0 \leq J_{\varepsilon,\alpha}(u_{\varepsilon,\alpha},x_0)= x_0^* R(\varepsilon,\alpha) x_0$ and therefore
\begin{equation}\label{09:26}
P_2(\varepsilon,\alpha)=\mathrm{o}(1)\qquad\textrm{and}\qquad P_3(\varepsilon,\alpha)=\mathrm{o}(1)
\end{equation}
as $(\varepsilon,\alpha)\rightarrow(0,0)$ and $P_1(\varepsilon,\alpha)\leq R_1(\varepsilon,\alpha)$ in the sense of symmetric positive semidefinite matrices.
In particular, $P_1(0,0)\leq R_1(0,0)=Q_1(0,0)$.

It remains to prove that $P_1(\varepsilon,\alpha)=P_1(0,0) + \mathrm{o}(1)$ as $(\varepsilon,\alpha)\rightarrow(0,0)$.
The Riccati equation \eqref{ricca} gives the equation satisfied by $P_1(\varepsilon,\alpha)$:
\begin{equation*}
A_1^*P_1 + A_4^*P_3^* + P_1A_1 + P_3 A_4 - P_1B_1B_1^*P_1 - P_1B_1B_2^*P_3^* - P_3B_2B_1^*P_1 - P_3B_2B_2^*P_3^* = - \varepsilon I_\ell
\end{equation*}
where we have dropped the dependence in $\varepsilon$ and $\alpha$ for readability.
Using \eqref{09:26} and the fact that $A_4(\alpha)=\underset{\alpha\rightarrow 0}{\mathrm{o}}(1)$, we obtain
\begin{equation}\label{ric09:31}
A_1(\alpha)^*P_1(\varepsilon,\alpha) + P_1(\varepsilon,\alpha)A_1(\alpha) - P_1(\varepsilon,\alpha)B_1(\alpha)B_1(\alpha)^*P_1(\varepsilon,\alpha) = \mathrm{o}(1)
\end{equation}
as $(\varepsilon,\alpha)\rightarrow(0,0)$. Noting that $P_1(\varepsilon,\alpha)\leq R_1(\varepsilon,\alpha)$ and thus is bounded, given any sequence $(\varepsilon_k,\alpha_k)$ converging to $(0,0)$, up to some subsequence we have $P(\varepsilon_k,\alpha_k)\rightarrow \tilde P$ as $k\rightarrow +\infty$ with $\tilde P$ which is, thanks to \eqref{ric09:31}, the maximal symmetric positive semidefinite matrix solution of \eqref{riccaX} (as in Step 3 in the proof of Proposition \ref{prop_riccati1}). By uniqueness, it follows that $\tilde P = P_1(0,0)$. Since this argument is valid for any subsequence, the conclusion follows.

The proposition is proved.
\end{proof}


\begin{thebibliography}{99}

\bibitem{Abou-Kandil}
H. Abou-Kandil, G. Freiling, V. Ionescu, G. Jank,
\textit{Matrix Riccati equations},
Systems \& Control: Foundations \& Applications, Birkh\"auser Verlag, Basel, 2003.


\bibitem{AlabauPrivatTrelat}
F. Alabau-Boussouira, Y. Privat, E. Tr\'elat,
\textit{Nonlinear damped partial differential equations and their uniform discretizations},
J. Funct. Anal. {\bf 273} (2017), no. 1, 352--403.

\bibitem{AtwellKing}
J. A. Atwell, B. B. King,
\textit{Proper orthogonal decomposition for reduced basis feedback controllers for parabolic equations},
Math. Comput. Modelling, {\bf 33} (2001), 1--19.

\bibitem{BanksKunisch}
H.T. Banks, K. Kunisch,
\textit{The linear regulator problem for parabolic systems},
SIAM J. Control Optim., {\bf 22} (1984), no. 5, 684--698.

\bibitem{Barbu}
V. Barbu, G. Wang,
\textit{Internal stabilization of semilinear parabolic systems},
J. Math. Anal. Appl., {\bf 285} (2003), 387--407.

\bibitem{Berkooz}
G. Berkooz, P. Holmes, J. L. Lumley,
\textit{The proper orthogonal decomposition in the analysis of turbulent flows},
Annu. Rev. Fluid Mech., {\bf 25} (1993), 539--575.

\bibitem{Beutler}
F. J. Beutler,
\textit{The operator theory of the pseudo-inverse. I. Bounded operators},
J. Math. Anal. Appl., {\bf 10} (1965), 451--470.

\bibitem{Chambers}
D. H. Chambers, R. J. Adrian, P. Moin, D. S. Stewart,  H. J. Sung,
\textit{Karhunen-Lo\`eve expansion of Burgers model of turbulence},
Phys. Fluids, {\bf 31} (1988),  2573--2582.

\bibitem{Chapelle}
D. Chapelle, A. Gariah, J. Sainte-Marie,
\textit{Galerkin approximation with proper orthogonal decomposition: new error estimates and illustrative examples},
ESAIM Math. Model. Numer. Anal., {\bf 46} (2012),  731--757.

\bibitem{CoronTrelat1}
J.-M. Coron, E. Tr\'elat,
\textit{Global steady-state controllability of 1-D semilinear heat equations},
SIAM J. Control Optim., {\bf 43} (2004), 549--569.

\bibitem{CoronTrelat2}
J.-M. Coron, E. Tr\'elat,
\textit{Global steady-state stabilization and controllability of 1-D semilinear wave equations},
Commun. Contemp. Math., {\bf 8} (2006), 535--567.

\bibitem{CurtainZwart}
R. Curtain and H. Zwart,
An introduction to infinite-dimensional linear systems theory,
Texts in Applied Mathematics, 21. Springer-Verlag, New York, 1995.

\bibitem{EngelNagel}
K. J. Engel, R. Nagel,
\textit{One-parameter semigroups for linear evolution equations},
Graduate Texts in Mathematics, 194, Springer-Verlag, New York, 2000.

\bibitem{Golub}
G. H. Golub, C. F. Van Loan,
\textit{Matrix computations},
Fourth edition, Johns Hopkins Studies in the Mathematical Sciences, Johns Hopkins University Press, Baltimore, MD, 2013.

\bibitem{Gunzburger}
M. Gunzburger, N. Jiang, M. Schneier,
\textit{An Ensemble-Proper Orthogonal Decomposition method for the nonstationary Navier-Stokes equations},
SIAM J. Numer. Anal., {\bf 55} (2017),  286--304.

\bibitem{Hesthaven}
J.S. Hesthaven, G. Rozza, B. Stamm,
\textit{Certified reduced basis methods for parametrized partial differential equations},
SpringerBriefs in Mathematics, BCAM SpringerBriefs, Springer, Cham; BCAM Basque Center for Applied Mathematics, Bilbao, 2016.

\bibitem{HinzeVolkwein}
M. Hinze, S. Volkwein,
\textit{Error estimates for abstract linearÐquadratic optimal control problems using proper orthogonal decomposition},
Comput. Optim. Appl., {\bf 39} (2008), 319--345.

\bibitem{Holmes}
P. Holmes, J. L. Lumley, G. Berkooz,
\textit{Turbulence, coherent structures, dynamical systems and symmetry},
Cambridge University Press, Cambridge, 1996.

\bibitem{KappelSalamon}
F. Kappel, D. Salamon,
\textit{An approximation theorem for the algebraic Riccati equation},
SIAM J. Control Optim., {\bf 28} (1990), no. 5, 1136--1147.

\bibitem{KunischVolkwein1999}
K. Kunisch, S. Volkwein,
\textit{Control of the Burgers equation by a reduced-order approach using proper orthogonal decomposition},
J. Optim. Theory Appl., {\bf 102} (1999), 345--371.

\bibitem{KunischVolkwein2001}
K. Kunisch, S. Volkwein,
\textit{Galerkin proper orthogonal decomposition methods for parabolic problems},
Numer. Math., {\bf 90} (2001), 117--148.

\bibitem{KunischVolkwein2002}
K. Kunisch, S. Volkwein,
\textit{Galerkin proper orthogonal decomposition methods for a general equation in fluid dynamics},
SIAM J. Numer. Anal., {\bf 40} (2002), 492--515.

\bibitem{Kunisch1}
K. Kunisch, S. Volkwein, L. Xie,
\textit{HJB-POD-based feedback design for the optimal control of evolution problems},
SIAM J. Appl. Dyn. Syst., {\bf 3} (2004), 701--722.

\bibitem{Kunisch2}
K. Kunisch, L. Xie,
\textit{POD-based feedback control of the Burgers equation by solving the evolutionary HJB equation},
Comput. Math. Appl., {\bf 49} (2005), 1113--1126.

\bibitem{kwak}
H. Kwarkernaak, R. Sivan,
\textit{Linear optimal control systems},
Wiley-Interscience, John Wiley \& Sons, New-York-London-Sydney, 1972.

\bibitem{LT1}
I. Lasiecka, R. Triggiani,
\textit{Control theory for partial differential equations: continuous and approximation theories. I. Abstract parabolic systems},
Encyclopedia of Mathematics and its Applications, 74, Cambridge University Press, Cambridge, 2000.

\bibitem{LT2}
I. Lasiecka, R. Triggiani,
\textit{Control theory for partial differential equations: continuous and approximation theories. II. Abstract hyperbolic-like systems over a finite time horizon},
Encyclopedia of Mathematics and its Applications, 75, Cambridge University Press, Cambridge, 2000.

\bibitem{Lassila}
T. Lassila, A. Manzoni, A. Quarteroni, G. Rozza,
\textit{Model order reduction in fluid dynamics: challenges and perspectives},
Reduced order methods for modeling and computational reduction, 235--273,
Model. Simul. Appl., 9, Springer, Cham, 2014.

\bibitem{LeeMarkus}
E. B. Lee, L. Markus,
\textit{Foundations of optimal control theory},
John Wiley, New York-London-Sydney, 1967.


\bibitem{LiuZheng}
Z. Liu, S. Zheng,
\textit{Semigroups associated with dissipative systems},
Chapman \& Hall/CRC Research Notes in Mathematics, 398, Boca Raton, FL, 1999.

\bibitem{Lumley}
J. L. Lumley,
\textit{The structure of inhomogeneous turbulence flows},
in: A. M. Yaslom and V. I. Tatarski, editors, Atmospheric Turbulence and wave Propagation, 166--178, Nauka, Moscow, 1967.

\bibitem{Ly}
H. V. Ly, H. T. Tran,
\textit{Proper orthogonal decomposition for flow calculations and optimal control in a horizontal CVD reactor},
Quart. Appl. Math., {\bf 60} (2002),  631--656.

\bibitem{Markovsky}
I. Markovsky,
\textit{Low rank approximation, algorithms, implementation, applications},
Communications and Control Engineering Series, Springer, London, 2012.


\bibitem{Pazy}
A. Pazy,
\textit{Semigroups of linear operators and applications to partial differential equations},
Applied Math. Sci., 44, Springer-Verlag, 1983.


\bibitem{Quarteroni}
A. Quarteroni, A. Manzoni, F. Negri,
\textit{Reduced basis methods for partial differential equations, An introduction},
Unitext, 92, La Matematica per il 3+2, Springer, Cham, 2016.

\bibitem{Ravindran}
S. S. Ravindran,
\textit{A reduced-order approach for optimal control of fluids using proper orthogonal decomposition},
Internat. J. Numer. Methods Fluids, {\bf 34} (2000),  425--448.


\bibitem{Russell}
D. L. Russell,
\textit{Controllability and stabilizability theory for linear partial differential equations: recent progress and open questions},
SIAM Rev., {\bf 20} (1978),  639--739.

\bibitem{Singer}
M. A. Singer, W. H. Green,
\textit{Using adaptive proper orthogonal decomposition to solve the reaction-diffusion equation},
Appl. Numer. Math., {\bf 59} (2009),  272--279.

\bibitem{Sirovich}
L. Sirovich, B. W. Knight, J. D. Rodriguez,
\textit{Optimal low-dimensional dynamical approximations},
Quart. Appl. Math., {\bf 48} (1990),   535--548.

\bibitem{Sontag}
E. D. Sontag,
\textit{Mathematical control theory, Deterministic finite-dimensional systems},
Second edition, Texts in Applied Mathematics, 6, Springer-Verlag, New York, 1998.

\bibitem{Teytel}
M. Teytel,
\textit{How rare are multiple eigenvalues?},
Comm. Pure Applied Math., {\bf 52} (1999), no. 8, 917--934.

\bibitem{Theod}
A. Theodoropoulou, R. A. Adomaitis, E. Zafiriou,
\textit{Model reduction for optimization of rapid thermal chemical vapor deposition systems},
IEEE Trans. Semiconductor Manufacturing, {\bf 11} (1998), 85--98.

\bibitem{Trelat}
E. Tr\'elat,
\textit{Contr\^ole optimal. (French) [Optimal control] Th\'eorie \& applications. [Theory and applications]},
Math\'ematiques Concr\`etes, [Concrete Mathematics] Vuibert, Paris, 2005.

\bibitem{trelat_stab}
E. Tr\'elat,
\textit{Stabilization of semilinear PDEs, and uniform decay under discretization},
Evolution equations: long time behavior and control, 31--76, London Math. Soc. Lecture Note Ser., 439, Cambridge Univ. Press, Cambridge, 2018.


\bibitem{TroltzschVolkwein}
F. Tr\"oltzsch, S. Volkwein,
\textit{POD a-posteriori error estimates for linear-quadratic optimal control problems},
Comput. Optim. Appl., {\bf 44} (2009), 83--115.

\bibitem{TucsnakWeiss}
M. Tucsnak, G. Weiss,
\textit{Observation and control for operator semigroups},
Birkh\"auser Advanced Texts: Basel Textbooks, Birkh\"auser Verlag, Basel, 2009.


\bibitem{Volkwein} S. Volkwein,
\textit{Optimal and suboptimal control of partial differential equations: augmented Lagrange-SQP methods and reduced-order modeling with proper orthogonal decomposition},
Grazer Mathematische Berichte [Graz Mathematical Reports], 343, Karl-Franzens-Universit\"{a}t Graz, Graz, 2001.

\bibitem{WangXu}
G. Wang, Y. Xu,
\textit{Periodic feedback stabilization for linear periodic evolution equations},
SpringerBriefs in Mathematics. Springer, Cham; BCAM Basque Center for Applied Mathematics, Bilbao, 2016.

\bibitem{Z}
J. Zabczyk,
Mathematical control theory, An introduction,
Reprint of the 1995 edition, Modern Birkh\"auser Classics. Birkh\"auser Boston, Inc., Boston, MA, 2008.




\end{thebibliography}
\end{document}